\crefname{equation}{equation}{equations}
\Crefname{equation}{Equation}{Equations}
\theoremstyle{plain}
\newtheorem{thm}{Theorem}
\newtheorem{prp}[thm]{Proposition}
\newtheorem{cor}[thm]{Corollary}
\newtheorem{lem}[thm]{Lemma}
\numberwithin{thm}{section}
\theoremstyle{example}
\newtheorem*{expl}{Example}
\theoremstyle{definition}
\newtheorem*{nota}{Notation}
\newtheorem{rem}[thm]{Remark}
\newcommand*{\proofstepname}{Proof step}
\newenvironment{proofstep}[1][\proofstepname]{\begin{proof}[#1]}{\end{proof}}
\title{A high-genus asymptotic expansion of \\ Weil--Petersson volume polynomials}
\author[$\dagger$]{Nalini Anantharaman}
\author[$\dagger$]{Laura Monk}
\affil[$\dagger$]{{\small Universit\'e de Strasbourg, CNRS, IRMA UMR 7501, F-67000 Strasbourg, France}}
\date{\today}
\newcommand{\x}{\mathbf{x}}
\newcommand{\R}{\mathbb{R}}
\newcommand{\N}{\mathbb{N}}
\renewcommand{\d}{\, \mathrm{d}}
\renewcommand{\O}[2][]{\mathcal{O}_{#1} \left( #2 \right)}
\newcommand{\1}[1]{\mathds{1}_{#1}}
\let\div\relax
\newcommand{\div}[1]{\paren*{\frac{#1}{2}}}
\newcommand{\cd}[1]{\mathrm{c} \paren{#1}}
\newcommand{\sd}[1]{\mathrm{sc} \paren{#1}}
\DeclarePairedDelimiter{\abso}{|}{|}
\DeclarePairedDelimiter{\paren}{(}{)}
\DeclarePairedDelimiter{\brac}{[}{]}
\DeclarePairedDelimiter{\floor}{\lfloor}{\rfloor}
\DeclarePairedDelimiter{\jap}{\langle}{\rangle}
\newcommand{\volwp}{\mathrm{Vol}^{\mathrm{\scriptsize{WP}}}}
\DeclareMathOperator{\sinhc}{sinhc}
\newcommand{\z}[1]{\mathbf{0}^{#1}}
\begin{document}

\maketitle

\begin{abstract}
  The object under consideration in this article is the total volume $V_{g,n}(x_1, \ldots, x_n)$ of the moduli space of
  hyperbolic surfaces of genus $g$ with $n$ boundary components of lengths $x_1, \ldots, x_n$, for the Weil--Petersson
  volume form. We prove the existence of an asymptotic expansion of the quantity $V_{g,n}(x_1, \ldots, x_n)$ in terms of
  negative powers of the genus $g$, true for fixed $n$ and any $x_1, \ldots, x_n \geq 0$. The first term of this
  expansion appears in work of Mirzakhani and Petri (2019), and we compute the second term explicitly. The main tool used in
  the proof is Mirzakhani's topological recursion formula, for which we provide a comprehensive introduction.
\end{abstract}

\section{Introduction and statement of the results}

\subsection{First definitions and notations}

Let $g$, $n$ be two integers such that $2g-2+n>0$. For any given length
vector $\x = (x_1, \ldots, x_n) \in \R_{\geq 0}^n$, we define the moduli
space $\mathcal{M}_{g,n}(\x)$ to be the set of isometry classes of
surfaces $X$ satisfying the following:
\begin{itemize}
\item $X$ is a connected oriented hyperbolic surface of genus $g$ with $n$ labelled
  boundary components $b_1, \ldots, b_n$,
\item for all $i \in \{1, \ldots, n\}$, the boundary components
  $b_i$ is a closed geodesic of length $x_i$ if $x_i > 0$, or a
  cusp if $x_i=0$.
\end{itemize}
This space is an orbifold of dimension $6g-6+2n$. It is
equipped with a natural symplectic form, the Weil--Petersson form
$\omega^{\mathrm{WP}}_{g,n,\x}$ \cite{weil1958,goldman1984}, which
canonically induces a volume form
\begin{equation*}
  \volwp_{g,n,\x} :=
  \frac{1}{(3g-3+n)!} \underbrace{\omega^{\mathrm{WP}}_{g,n,\x} \wedge \ldots \wedge
  \omega^{\mathrm{WP}}_{g,n,\x}}_{3g-3+n \text{ times}}.
\end{equation*}
The object under consideration in this article is the total volume of the
moduli space, 
\begin{equation*}
  V_{g,n}(\x) : =\volwp_{g,n,\x}(\mathcal{M}_{g,n}(\x)) < + \infty.
\end{equation*}
By work of Mirzakhani, \cite{mirzakhani2007}, the volume $V_{g,n}(\x)$ is a symmetric polynomial function in
$x_1^2, \ldots, x_n^2$ of degree $3g-3+n$, and can therefore be written as
\begin{equation*}
  V_{g,n}(\x)
  = \sum_{\substack{\alpha_1 + \ldots + \alpha_n \\\leq 3g-3+n}} c_{g,n}(\alpha)
  \prod_{j=1}^n \frac{x_j^{2\alpha_j}}{2^{2\alpha_j}(2\alpha_j+1)!}
\end{equation*}
for a family of coefficients $(c_{g,n}(\alpha))_\alpha$\footnote{For the
  sake of readability, our notation differs from the usual notation
  $[\tau_{\alpha_1} \ldots \tau_{\alpha_n}]_{g,n}$ from intersection theory
  (see \cite{mirzakhani2007a}).}.

\subsection{Asymptotic expansions of Weil--Petersson volumes}

We shall provide an asymptotic expansion of the quantity $V_{g,n}(\x)$,
true for a fixed $n \geq 1$, any length vector $\x \in \R_{\geq 0}^n$, and as
the genus $g$ approaches infinity. The motivations for this question and
this particular choice of setting are presented in
\cref{sec:motiv-study-rand}.

\paragraph{Notations}

Let $n \geq 1$ be an integer. We will use the $\ell^1$ and $\ell^\infty$ norms on~$\R^n$, denoted as $|\cdot|$ and
$|\cdot|_\infty$ respectively.  For any real number $x$, we define $\jap{x} := \sqrt{1+x^2}$. We extend this definition
to~$\mathbf{x} \in \R^n$ by setting $\jap{\mathbf{x}} := \jap{\abso{\mathbf{x}}}$.

We let $\N_0 := \{0, 1, 2, \ldots \}$ denote the set of non-negative integers. We write
$\z{n} := (0, \ldots, 0) \in \N_0^n$.  For any $1 \leq i \leq n$, $\delta_i$ denotes the discrete derivation w.r.t. the
$i$-th coordinate, acting on functions $v : \N_0^n \rightarrow \R$ by
\begin{equation*}
  \delta_i v(\alpha) := v(\alpha)
  - v(\alpha_1, \ldots, \alpha_{i-1}, \alpha_i+1, \alpha_{i+1}, \ldots, \alpha_n).
\end{equation*}
We will use the usual conventions for multi-indices $\alpha \in \N_0^n$, and notably:
\begin{align*}
  & (\alpha_1, \ldots, \hat{\alpha}_j, \ldots, \alpha_n)
  := (\alpha_1, \ldots, \alpha_{j-1}, \alpha_{j+1}, \ldots, \alpha_n) \in \N_0^{n-1} \\
  & \partial^{\alpha} := \partial_1^{\alpha_1} \ldots \partial_n^{\alpha_n} \\
  & \alpha_I := (\alpha_{i_1}, \ldots, \alpha_{i_r})
             \quad \text{for } I = \{i_1 < i_2 < \ldots i_r\} \subset \{1, \ldots, n\}.
\end{align*}

We write $A = \O{B}$ if there exists a universal constant $C>0$ such that, for any choice of parameters,
$\abso{A} \leq C B$. If the constant depends on a parameter~$p$, we rather write $A = \O[p]{B}$.

The function $\sinhc: \R \rightarrow \R$ is defined by
\begin{equation*}
  \sinhc (x) :=
  \begin{cases}
    \frac{\sinh x}{x} & \text{if } x \neq 0 \\
    1 & \text{otherwise.}
  \end{cases}
\end{equation*}

\paragraph{State of the art}

The value at zero of the Weil--Petersson volume, denoted as $V_{g,n} := V_{g,n}(\z{n}) = c_{g,n}(\z{n})$ and which
corresponds to the case where all boundary components are cusps, has been thoroughly studied. In \cite{mirzakhani2015},
Mirzakhani and Zograf have proved it admits a full asymptotic expansion of the form
\begin{equation*}
  V_{g,n} = C_V \, \frac{(2g-3+n)!(4 \pi^2)^{2g-3+n}}{\sqrt{g}}
  \left(1 + \frac{e_n^{(1)}}{g} + \ldots + \frac{e_n^{(N)}}{g^N}
    + O_n \left( \frac{1}{g^{N+1}} \right) \right),
\end{equation*}
where $C_V>0$ is a universal constant.  Asymptotic expansions of other quantities, such as the coefficient
$c_{g,n}(\alpha)$ for a fixed $\alpha$, are also provided.

Unfortunately, for general length vectors $\x \in \R_{\geq 0}^n$, the best approximation of $V_{g,n}(\x)$ in literature
so far is the first-order approximation proved by Mirzakhani and Petri \cite[Proposition
3.1]{mirzakhani2019}\footnote{Actually, the factor $|\x|$ in the remainder is missing in \cite{mirzakhani2019}. This minor error
  has no implication for the purposes of Mirzakhni and Petri's article, or the further applications 
  \cite{wu2021,lipnowski2021}, but would have contradicted our second-order expression (\cref{thm:order_two}).}, which states that for any $\x$,
\begin{equation}
  \label{eq:first_order}
  \frac{V_{g,n}(\x)}{V_{g,n}}
    =  \prod_{j=1}^n \sinhc \div{x_j}
  +  \O[n]{\frac{\abso{\x}}{\jap{g}} \exp\div{x_1 + \ldots + x_n}}.
\end{equation}
This estimate plays a key role in \cite{wu2021,lipnowski2021}, as we will see in \cref{sec:motiv-study-rand}. The aim of
this article is to prove a similar result, with an error term decaying like $1/\jap{g}^{N+1}$ for arbitrarily large $N$
rather than $N=0$.

\paragraph{Statement of the main result}

The main result proved in this article is the following.

\begin{thm}
  \label{theo:volume_asympt_exp}
  For any integers $g \geq 0$, $n \geq 1$ such that $2g-2+n>0$, there exists a family of $n$-variable even polynomial
  functions $(P_{g,n}^{(N,I_\pm)})_{N, I_\pm}$, for $N \geq 0$ and $I_+ \sqcup I_- \subseteq \{1, \ldots, n\}$, such
  that for any integer $N \geq 0$ and any length vector $\x \in \R_{\geq 0}^n$,
  \begin{equation}
    \label{e:thm_main}
    \frac{V_{g,n}(\x)}{V_{g,n}}
    = F_{g,n}^{(N)}(\x) 
    + \O[N,n]{\frac{\jap{\x}^{3N+1}}{\jap{g}^{N+1}} \exp \div{x_1+\ldots
        +x_n}}
  \end{equation}
  where
  \begin{equation}
    \label{eq:1}
    F_{g,n}^{(N)}(\x)
    := \sum_{I_+ \sqcup I_- \subseteq \{1, \ldots,n \}}
    P_{g,n}^{(N,I_\pm)}(\x) \prod_{i \in I_+} \cosh
    \div{x_i}
    \prod_{i \in I_-} \sinhc \div{x_i}.
  \end{equation}
  Furthermore, there exists constants $D_{n,N}, A_N \geq 0$ such that the polynomial function $P_{g,n}^{(N,I_\pm)}$ can
  be expressed as a polynomial of degree $\leq D_{n,N}$, and its coefficients can be written as linear combinations
  (independent of $g$) of the $c_{g,n}(\alpha)/V_{g,n}$ for multi-indices $\alpha$ such that
  $\abso{\alpha}_\infty \leq A_N$.
\end{thm}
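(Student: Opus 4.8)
The plan is to work throughout with the normalized coefficients $v_{g,n}(\alpha) := c_{g,n}(\alpha)/V_{g,n}$, which satisfy $v_{g,n}(\z{n}) = 1$, and to view the normalized volume as the image of $v_{g,n}$ under a single linear ``generating map''
\begin{equation*}
  \Phi[v](\x) := \sum_{\alpha \in \N_0^n} v(\alpha) \prod_{j=1}^n w_{\alpha_j}(x_j),
  \qquad \frac{V_{g,n}(\x)}{V_{g,n}} = \Phi[v_{g,n}](\x),
\end{equation*}
where $w_k(x) := x^{2k}/(2^{2k}(2k+1)!)$ are the monomial weights, so that $\sum_k w_k(x) = \sinhc\div{x}$. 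Two ingredients then drive the argument: an \emph{algebraic} resummation identity for $\Phi$, which produces the shape \eqref{eq:1}, and an \emph{analytic} decay estimate for the discrete derivatives of $v_{g,n}$, which controls the error.

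For the algebraic part, I would introduce the one-variable Euler operators $\mathcal{T}_j := \tfrac{x_j}{2}\partial_{x_j}$, which act on the weights by $\mathcal{T}_j w_k(x_j) = k\, w_k(x_j)$; consequently $\Phi\big[\alpha \mapsto P(\alpha)\big](\x) = P(\mathcal{T}_1, \ldots, \mathcal{T}_n) \prod_j \sinhc\div{x_j}$ for every polynomial $P$. A direct computation shows that each $\mathcal{T}_j$ preserves the space spanned by $x_j^a \cosh\div{x_j}$ and $x_j^b \sinhc\div{x_j}$. Hence $\Phi$ sends any polynomial in $\alpha$ to a finite combination, with polynomial coefficients, of products $\prod_{i \in I_+} \cosh\div{x_i} \prod_{i \in I_-} \sinhc\div{x_i}$ indexed by a choice of $\cosh$ or $\sinhc$ per coordinate. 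This is exactly the form \eqref{eq:1}, and it is the structural reason only finitely many of the $c_{g,n}(\alpha)/V_{g,n}$ enter.

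For the analytic part, the estimate I would establish --- and this is the main obstacle --- is a bound of the form $\abso{\delta^\beta v_{g,n}(\alpha)} = \O[N,n]{\jap{\alpha}^{d_N}/\jap{g}^{\abso{\beta}}}$ for $\abso{\beta} \leq N+1$, where $\delta^\beta := \prod_i \delta_i^{\beta_i}$ and $d_N$ depends only on $N$; in words, \emph{each discrete derivative gains a full power of $1/g$, uniformly over the whole range of $\alpha$}. The first-order case is essentially Mirzakhani--Petri's estimate \eqref{eq:first_order}, and the general case should follow by feeding Mirzakhani's topological recursion into an induction on $N$: the recursion expresses $c_{g,n}$ through volumes of lower complexity, and applying $\delta_i$ cancels the leading term, manifestly producing an extra inverse power of the genus. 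Making this \emph{uniform in} $\alpha$, rather than valid only for fixed $\alpha$ as in Mirzakhani--Zograf, is the crux of the whole proof.

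Granting these inputs, I would conclude by the multivariate finite-difference (Newton) expansion of $v_{g,n}$ about $\z{n}$, truncated at total order $\abso{\beta} \leq N$. Applying $\Phi$ termwise, the main sum yields $F_{g,n}^{(N)}$: its building blocks, the images under $\Phi$ of the Newton monomials $\alpha \mapsto \prod_i \binom{\alpha_i}{\beta_i}$, are the $\cosh/\sinhc$ combinations above, while the prefactors $\delta^\beta v_{g,n}(\z{n})$ are $g$-independent linear combinations of the $c_{g,n}(\gamma)/V_{g,n}$ with $\gamma \leq \beta$, so that $\abso{\gamma}_\infty \leq N =: A_N$; the degree bound $D_{n,N}$ follows by counting how much the operators $\mathcal{T}_j$ raise degrees. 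The Newton remainder involves $\delta^\beta v_{g,n}$ with $\abso{\beta} = N+1$ at intermediate indices, so the uniform estimate bounds it by $\O{\jap{g}^{-(N+1)}}$ times $\sum_\alpha \jap{\alpha}^{d_N} \prod_j w_{\alpha_j}(x_j)$; since each moment $\sum_k k^p w_k(x_j) = \mathcal{T}_j^p \sinhc\div{x_j} = \O[p]{\jap{x_j}^{2p} \exp\div{x_j}}$, the factorial decay of the weights absorbs the polynomial and binomial growth, and the bookkeeping of these losses yields the stated error $\O[N,n]{\jap{\x}^{3N+1} \jap{g}^{-(N+1)} \exp\div{x_1 + \ldots + x_n}}$. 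Collecting terms over the partitions $I_+ \sqcup I_-$ gives \eqref{e:thm_main}.
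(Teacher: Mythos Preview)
Your algebraic resummation via Euler operators is fine and amounts to the paper's use of the basis $p_k(X)=(2X+1)(2X)\cdots(2X+2-k)$. The gap is in the analytic input, and it is twofold. First, the claimed rate ``each discrete derivative gains a full power of $1/g$'' is false. The second-order computation in the paper gives
\[
\delta_1 c_{g,n}(\alpha) = \bigl(16\alpha_1 - 4 + 8\,\1{\alpha_1=0}\bigr)\,V_{g-1,n+1}\,\1{g\geq 1} + \cdots + \O[n]{\jap{\alpha}^3 V_{g,n}/\jap{g}^2},
\]
so that $\delta_1^2 c_{g,n}(\alpha) = \bigl(-16 + 8\,\1{\alpha_1=0}\bigr)\,V_{g-1,n+1} + \cdots$ is still of size $V_{g,n}/\jap{g}$: two derivatives buy only one power of $1/\jap{g}$. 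The correct statement (\cref{thm:derivative_coeff}) is $\delta^{\mathbf{m}} c_{g,n}(\alpha) = \O[n,N]{\jap{\alpha}^N V_{g,n}/\jap{g}^N}$ for $\abso{\mathbf{m}} \in \{2N-1,2N\}$, so a Newton expansion must be carried to order $2N$, not $N$, to reach a remainder of order $\jap{g}^{-(N+1)}$.

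More seriously, the uniformity ``over the whole range of $\alpha$'' also fails, and this is not an artefact. Differentiating once more in the display above, $\delta_1^3 c_{g,n}(\alpha) = 8\,\1{\alpha_1=0}\,V_{g-1,n+1} + \cdots + \O[n]{V_{g,n}/\jap{g}^2}$, which at $\alpha_1=0$ is of order $V_{g,n}/\jap{g}$ rather than $V_{g,n}/\jap{g}^2$. The second-order approximation of $c_{g,n}(\alpha)$ genuinely contains non-polynomial terms such as $\1{\alpha_i=0}$, so no Newton expansion about $\z{n}$ can produce the stated error. The paper deals with this boundary effect by proving the derivative bound only under the restriction $\alpha_i \geq a_N$ whenever $m_i>0$, and by replacing the Taylor expansion with a \emph{shifted} one whose approximants are polynomial in $\alpha_i$ only for $\alpha_i \geq a_{N+1}$, while recording the finitely many values $c_{g,n}(\alpha)$ with $\alpha_i < a_{N+1}$ separately. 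Under your map $\Phi$, these recorded small-$\alpha_i$ values produce exactly the terms in \eqref{eq:1} with $i \notin I_+ \cup I_-$ --- a pure even polynomial in $x_i$ with neither $\cosh$ nor $\sinhc$ attached --- and push $A_N$ well beyond $N$; your purely polynomial Ansatz in $\alpha$ would omit them and could not meet the error bound.
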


\begin{rem}
  \label{rem:deg}
  More precisely, our proof shows that degree of $P_{g,n}^{(N,I_\pm)}$ seen as a polynomial function of the variables
  $(x_i)_{i \in I_- \cup I_+}$ is $\leq 2N$, while as a polynomial function of $x_i$ for a $i \notin I_+ \cup I_-$ it is
  strictly smaller than the constant~$a_{N+1}$ from \cref{thm:derivative_coeff}, of size discussed below. In particular,
  one can take $D_{n,N}$ to be $2N+n(a_{N+1}-1)$. The value of $A_N$ provided by the proof is $2N+a_{N+1}$.
  
  It should be noted that, for any integer $i$, the dependency of the function $F_{g,n}^{(N)}(\x)$ with respect to a
  large $x_i$ will be dominated by the terms $I_+ \sqcup I_- \subseteq \{1, \ldots, n\}$ of the sum \eqref{eq:1} for which
  $i \in I_+ \cup I_-$, because they behave exponentially rather than polynomially. As a consequence, the fact that our
  degree bound is weaker for indices $i \notin I_+ \cup I_-$ has little to no consequences on the behaviour of
  $F_{g,n}^{(N)}(\x)$ for large values of~$\x$.
\end{rem}

\paragraph{Coefficient estimate and sketch of the proof}

The key technical step to prove \cref{theo:volume_asympt_exp} is an estimate for the discrete derivatives of
$\alpha \mapsto c_{g,n}(\alpha)$.

\begin{thm}
  \label{thm:derivative_coeff}
  For any order $N \geq 0$, there exists a constant $a_N \geq 0$ satisfying the following. For any integers $g \geq 0$,
  $n \geq 1$ such that $2g-2+n>0$, and any multi-indices~$\mathbf{m}, \alpha \in \N_0^n$ such that
  $\abso{\mathbf{m}} \in \{2N-1,2N\}$ and $\alpha_i \geq a_N$ for every index $i$ such that $m_i>0$, 
  \begin{equation}
    \delta^{\mathbf{m}}c_{g,n}(\alpha)
    = \O[n,N]{\jap{\alpha}^{N} \frac{V_{g,n}}{\jap{g}^N}}.
  \end{equation}
\end{thm}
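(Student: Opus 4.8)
The plan is to derive everything from Mirzakhani's topological recursion, recast as a recursion for the coefficients $c_{g,n}(\alpha)$ themselves. Singling out one boundary, the recursion expresses a weighted coefficient $(2\alpha_1+1)\,c_{g,n}(\alpha)$ as a sum of three kinds of contributions: a \emph{non-separating} term, a convolution of $c_{g-1,n+1}(\cdot,\cdot,\hat{\alpha}_1)$ against an explicit kernel built from the constants $a_i=\zeta(2i)\,(1-2^{1-2i})$ and the factorial normalisations; \emph{separating} terms, convolutions of products $c_{g_1,n_1+1}\,c_{g_2,n_2+1}$ over all topological splittings; and terms lowering $n$, involving $c_{g,n-1}$. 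The negative powers of the genus are injected by the known large-genus volume asymptotics of Mirzakhani and Zograf: both $V_{g-1,n+1}/V_{g,n}$ and $V_{g,n-1}/V_{g,n}$ are $O(1/g)$, while the separating products $V_{g_1,\cdot}\,V_{g_2,\cdot}/V_{g,n}$ are smaller still. In the undifferentiated identity these factors are compensated by the growth of the convolution ranges; the content of the theorem is that each discrete derivative tips this balance in favour of the genus.

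I would then argue by a double induction on the order $N$ and on the complexity $2g-2+n$, the base case $N=0$ being the uniform bound $c_{g,n}(\alpha)=O_n(V_{g,n})$, which follows from the positivity of the coefficients together with the known volume asymptotics. For the inductive step I would apply $\delta^{\mathbf{m}}$ to the recursion, choosing the distinguished boundary relative to the support of $\mathbf{m}$ and using the discrete Leibniz rule to relate $\delta^{\mathbf{m}}c_{g,n}(\alpha)$ to the differentiated right-hand side up to correction terms of strictly lower order, controlled by the outer induction hypothesis. Derivatives in directions other than the distinguished one fall directly on the inner coefficients $c_{g-1,n+1}$ and $c_{g,n-1}$, where the induction hypothesis at lower complexity applies and the volume ratios $V_{g-1,n+1}/V_{g,n}$, $V_{g,n-1}/V_{g,n}=O(1/g)$ supply additional powers of the genus. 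To then use the inductive estimate — which controls discrete \emph{differences} of the coefficients — inside the convolutions, I would use summation by parts, rewriting $\sum_k K(\alpha,k)\,c_\bullet(k,\ldots)$ in terms of the differences $\delta_k c_\bullet$ so that their smallness propagates forward, producing one factor $\jap{\alpha}/\jap{g}$ per pair of additional derivatives. The separating terms, being quadratically small in the volumes, fall automatically into the remainder.

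The hard part will be the uniform control of these convolutions over the whole admissible range of $\alpha$. Because the summation ranges grow with $g$ while each summand is only $O(1/g)$ relative to $V_{g,n}$, the estimate is size-critical: nothing can be discarded by absolute bounds, and the gain has to be extracted from genuine cancellations revealed by the summation by parts, tracked simultaneously across all regimes of $\alpha$ and all splitting types. A second, orthogonal difficulty is the bookkeeping of the threshold $a_N$: the difference estimate only holds once each differentiated $\alpha_i$ is large enough to lie in the asymptotic regime, and since every summation-by-parts step and every application of the induction hypothesis consumes part of this margin, one must propagate a threshold that grows with $N$ (consistent with the constant $a_{N+1}$ appearing in \cref{rem:deg}) and verify that the hypotheses remain met after each reduction. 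Finally, I would phrase the inductive bound so as to respect the parity phenomenon — that $\abso{\mathbf{m}}=2N-1$ and $\abso{\mathbf{m}}=2N$ carry the same power $\jap{g}^{-N}$ — which reflects that the natural object is the $1/g$-expansion of $c_{g,n}(\alpha)/V_{g,n}$ whose coefficient of $g^{-k}$ is polynomial of degree $2k$ in $\alpha$, so that one derivative beyond an even order costs only a factor $\jap{\alpha}$ and no further power of $g$.
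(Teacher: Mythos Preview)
Your outline matches the paper's proof: induction on $N$ alone (the inner induction on $2g-2+n$ is unnecessary, since at each rank the hypothesis is assumed for all $(g,n)$ and the recursion already lowers $|\chi|$), one derivative in the distinguished direction absorbed by a change of index that turns $u_i$ into the summable $u_i-u_{i-1}$, the remaining $2N$ derivatives landing on the inner coefficients where the rank-$N$ hypothesis applies, and the extra $1/\jap{g}$ supplied by the volume ratios. The odd case $|\mathbf{m}|=2N+1$ is the one proved directly; the even case $2N+2$ follows from it by the trivial bound $|\delta^{\mathbf{m}}v(\alpha)|\leq|\delta^{\mathbf{m}-e_i}v(\alpha)|+|\delta^{\mathbf{m}-e_i}v(\alpha+e_i)|$. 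Your ``summation by parts'' is made precise in the paper as \cref{lem:delta_sum_k1_k2}, which shows that $\delta^m$ of a convolution $v_k=\sum_{k_1+k_2=k}c_{k_1,k_2}$ lands on the factor with the \emph{larger} index (hence $\geq a_N$ once $a_{N+1}\geq 2a_N+2$), plus boundary terms of order $m-1$ evaluated at the midpoint.

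One point you underestimate: the separating contribution does \emph{not} fall automatically into the remainder for $N\geq 2$. The basic bound $\sum_{g_1+g_2=g}V_{g_1,\cdot}V_{g_2,\cdot}=O_n(V_{g,n}/\jap{g})$ yields only a single power of the genus; to reach $\jap{g}^{-(N+1)}$ you must apply the induction hypothesis to \emph{both} factors at ranks $N_1,N_2$ (with $N_1+N_2\in\{N,N+1\}$, determined by how the $2N$ surviving derivatives split between the two sides) and then invoke the refined estimate of \cref{lem:volume_sum_K},
\[
\sum_{\substack{g_1+g_2=g\\2g_i+n_i>N_i+1}}\frac{V_{g_1,n_1+1}V_{g_2,n_2+1}}{\jap{g_1}^{N_1}\jap{g_2}^{N_2}}=\O[n,N]{\frac{V_{g,n}}{\jap{g}^{N_1+N_2+1}}}.
\]
The restriction $2g_i+n_i>N_i+1$ in this sum is essential --- the lemma is false without it --- so the excluded configurations must be shown to contribute zero, which they do because $|\alpha_I|$ then exceeds the degree of $V_{g_1,|I|+1}$. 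This is where the second threshold condition $a_{N+1}\geq 3(2N+1)$ enters, and it is not captured by your sketch.
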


By a discrete Taylor-expansion result (\cref{lemm:taylor_shift}), \cref{thm:derivative_coeff} implies that the
coefficients $c_{g,n}(\alpha)$ can be well-approximated by functions which are almost polynomial in $\alpha$, and
\cref{theo:volume_asympt_exp} then follows.

Interestingly, the fact that $c_{g,n}(\alpha)$ can be approximated by functions which are almost polynomial in $\alpha$
had already been observed by Mirzakhani and Zograf in \cite[Lemma 4.8]{mirzakhani2015}. However, since the dependency on
$\alpha$ of the coefficients $c_{g,n}(\alpha)$ is not the main objective in \cite{mirzakhani2015}, the proof of this
statement is only sketched, and presented as a technical lemma. To the contrary, thanks to our new idea of estimating
the discrete derivatives of $\alpha \mapsto c_{g,n}(\alpha)$, our proof is fairly elementary. It only relies on one
application of Mirzakhani's topological recursion formula \cite{mirzakhani2007} and a few classic volume estimates from
\cite{mirzakhani2013}, all of which are carefully presented in \cref{sec:prereq}.

The parameter $a_N$ present in \cref{thm:derivative_coeff} encapsulates the fact that the volume coefficients
$c_{g,n}(\alpha)$ take exceptional values for small multi-indices $\alpha$. This phenomenon is already mentionned in
\cite[Remark 4.3]{mirzakhani2015}, where it is referred to as a `boundary effect'. It is not an artefact of the proof,
and can be observed in both Mirzakhani and Zograf's remark and our explicit formula for the second-order term, \cref{thm:order_two}.

The constant $a_N$ provided by our proof grows like $2^N$. This value is not optimal, and its exponential behaviour
comes as a drawback of our new induction argument. In \cite[Lemma 4.8]{mirzakhani2015}, a much smaller value $a_N = 2N$
is obtained, but we have unfortunately not been able to achieve a linear value using our method.

\paragraph{Expansion in negative powers of $g$}

Using the expansion of $c_{g,n}(\alpha)/V_{g,n}$ in negative powers of
$g$ for a fixed multi-index $\alpha$ proved by Mirzakhani and Zograf
\cite[Theorem 4.1]{mirzakhani2015}, we can straightforwardly deduce from
\cref{theo:volume_asympt_exp} the following expansion, which is now
uniquely defined.

\begin{cor}
  \label{cor:exp_pow_g}
  Let $n \geq 1$ be an integer.  There exists a unique family
  $(f_n^{(k)})_{k \geq 0}$ of functions such that
  for any integer $N \geq 0$, any genus $g \geq 1$ and any length
  vector $\x \in \R_{\geq 0}^n$,
  \begin{equation}
    \label{e:fn_approx}
    \frac{V_{g,n}(\x)}{V_{g,n}}
    = \sum_{k=0}^N \frac{f_{n}^{(k)}(\x)}{g^k}
    + \O[N,n]{\frac{\jap{\x}^{3N+1}}{g^{N+1}} \exp \div{x_1 + \ldots + x_n}}.
  \end{equation}
  Furthermore, for any $k \geq 0$, the function $f_n^{(k)}$ can be expressed as
  \begin{equation}
    \label{eq:fn_sinh}
    f_n^{(k)}(\x)
    = \sum_{I_+ \sqcup I_- \subseteq \{1, \ldots, n\}}
    Q_n^{(k,I_\pm)}(\x) \prod_{i \in I_+} \cosh \div{x_i}
    \prod_{i \in I_-} \sinhc \div{x_i},
  \end{equation}
  where $Q_n^{(k,I_\pm)}$ are uniquely defined even $n$-variable polynomial functions. 
\end{cor}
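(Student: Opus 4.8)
The plan is to deduce Corollary~\ref{cor:exp_pow_g} from Theorem~\ref{theo:volume_asympt_exp} by combining the latter with the known expansion in negative powers of $g$ for each fixed coefficient. The starting point is that Theorem~\ref{theo:volume_asympt_exp} already produces, for every fixed $N$, the approximant $F_{g,n}^{(N)}(\x)$ whose coefficients are explicit linear combinations (independent of $g$) of the ratios $c_{g,n}(\alpha)/V_{g,n}$ for bounded multi-indices $\abso{\alpha}_\infty \leq A_N$. By the cited result of Mirzakhani and Zograf \cite[Theorem 4.1]{mirzakhani2015}, each such ratio $c_{g,n}(\alpha)/V_{g,n}$ itself admits a full asymptotic expansion in integer powers of $1/g$. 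The idea is therefore to substitute these expansions into the coefficients appearing in $F_{g,n}^{(N)}(\x)$ and collect terms according to the power of $1/g$.

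\medskip

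First I would fix $n$ and work at an arbitrary truncation order $N$. Writing $F_{g,n}^{(N)}(\x)$ in the form \eqref{eq:1}, each polynomial $P_{g,n}^{(N,I_\pm)}$ has coefficients that are finite $g$-independent linear combinations of the $c_{g,n}(\alpha)/V_{g,n}$ with $\abso{\alpha}_\infty \leq A_N$. Applying \cite[Theorem 4.1]{mirzakhani2015} to each such ratio, I would replace it by its expansion $\sum_{k=0}^{N} a_k(\alpha)/g^k + \O[N,n]{1/g^{N+1}}$; since there are only finitely many multi-indices involved (a number bounded in terms of $n$ and $A_N$, hence of $n$ and $N$), the combined remainder is still $\O[N,n]{1/g^{N+1}}$, uniformly in $\x$ after multiplication by the fixed trigonometric factors. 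Grouping the resulting terms by the power $g^{-k}$ then defines candidate functions $f_n^{(k)}(\x)$ for $0 \le k \le N$, each of which inherits the shape \eqref{eq:fn_sinh}: the $\cosh$ and $\sinhc$ factors are untouched, and the new polynomial coefficients $Q_n^{(k,I_\pm)}$ are the coefficient-wise $g^{-k}$ parts of the expanded $P_{g,n}^{(N,I_\pm)}$, which are even polynomials of $\x$. Substituting the two expansions into \eqref{e:thm_main} and tracking the two error terms—the $\O[N,n]{\jap{\x}^{3N+1}\jap{g}^{-(N+1)}\exp\div{x_1+\dots+x_n}}$ from the theorem and the collected $\O[N,n]{g^{-(N+1)}}$ from the coefficient expansions, the latter again weighted by the bounded polynomial-times-exponential factors—yields the desired estimate \eqref{e:fn_approx}.

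\medskip

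The remaining point is uniqueness and well-definedness, which is the step I expect to require the most care. A priori the functions $f_n^{(k)}$ produced above could depend on the truncation order $N$ at which one performs the extraction, so I would argue that they do not. The key observation is that an asymptotic expansion of a fixed function in powers of $1/g$ is unique: if $\sum_{k=0}^{N} f_n^{(k)}(\x)/g^k$ and $\sum_{k=0}^{N} \tilde f_n^{(k)}(\x)/g^k$ both approximate $V_{g,n}(\x)/V_{g,n}$ to order $\O[N,n]{g^{-(N+1)}}$ for each fixed $\x$, then subtracting and letting $g \to \infty$ successively forces $f_n^{(k)}(\x) = \tilde f_n^{(k)}(\x)$ for all $k \le N$ and all $\x$. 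Applying this first for a single $\x$ and each $k$ pins down the functions $f_n^{(k)}$ as honest functions of $\x$ independent of $N$, so that the expansions at different orders are mutually consistent; this gives the unique family $(f_n^{(k)})_{k \ge 0}$. For the uniqueness of the polynomials $Q_n^{(k,I_\pm)}$ in the representation \eqref{eq:fn_sinh}, I would invoke the linear independence, as functions of $\x \in \R_{\geq 0}^n$, of the products $\prod_{i\in I_+}\cosh\div{x_i}\prod_{i\in I_-}\sinhc\div{x_i}$ over distinct disjoint pairs $(I_+,I_-)$ together with polynomial coefficients: these products have distinct exponential growth rates in the relevant variables, so a vanishing combination forces each polynomial coefficient to vanish. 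This guarantees that, once $f_n^{(k)}$ is fixed, its decomposition into the trigonometric basis determines the even polynomials $Q_n^{(k,I_\pm)}$ uniquely.
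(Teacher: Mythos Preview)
Your proposal is correct and follows essentially the same route as the paper's proof: expand the coefficients of $P_{g,n}^{(N,I_\pm)}$ via Mirzakhani--Zograf, regroup by powers of $1/g$, and invoke uniqueness of asymptotic expansions together with linear independence of the $\cosh/\sinhc$ monomials. The only place where the paper is more explicit is in controlling the $\x$-dependence of the remainder coming from the coefficient expansion: it uses the degree bounds of Remark~\ref{rem:deg} (namely that $P_{g,n}^{(N,I_\pm)}$ has degree $<a_{N+1}$ in each variable $x_i$ with $i \notin I_+\cup I_-$) to absorb those polynomial factors into $\exp\div{x_1+\ldots+x_n}$, which is the step your phrase ``weighted by the bounded polynomial-times-exponential factors'' is standing in for.
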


The symmetry of $V_{g,n}(\x)$ implies that, for all $k$, $f_n^{(k)}$ is
symmetric, which in turn provides some relations between the
$Q_n^{(k,I_\pm)}$ for $I_+ \sqcup I_- \subseteq \{1, \ldots, n\}$.

\paragraph{Explicit expression for the first orders}

By \cite[Proposition 3.1]{mirzakhani2019}, the value of the first
approximation $f_n^{(0)}$ is
\begin{equation*}
  f_n^{(0)}(\x) = \prod_{j=1}^n \sinhc \div{x_j}.
\end{equation*}
We provide an explicit expression for the second-order approximation.
In order to simplify the notations, we introduce the functions
$\mathrm{c}$, $\mathrm{s}$ defined by
\begin{equation*}
  \forall x, \quad \cd{x} := \cosh \div{x} \quad \text{and} \quad
  \sd{x} := \sinhc \div{x}.
\end{equation*}
Then, the second-order expansion can be written as follows.

\begin{thm}
  \label{thm:order_two}
  For any $n \geq 1$ and $\x \in \R_{\geq 0}^n$,
  \begin{multline*}
    f_{n}^{(1)}(\x)
    = \frac{1}{\pi^2} \sum_{i=1}^n
    \brac*{\cd{x_i} +1
      - \paren*{\frac{x_i^2}{16} + 2} \, \sd{x_i}}
    \prod_{k \neq i} \sd{x_k}  \\
    - \frac{1}{2\pi^2} \sum_{1 \leq i < j \leq n}
    \brac*{\cd{x_i} \, \cd{x_j} +1
      - 2 \, \sd{x_i} \, \sd{x_j}} 
    \prod_{k \notin \{i, j\}} \sd{x_k}.
  \end{multline*}
\end{thm}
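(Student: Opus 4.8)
The plan is to read off $f_n^{(1)}$ from the $1/g$-correction to the normalised coefficients $c_{g,n}(\alpha)/V_{g,n}$ and then to resum the resulting power series in closed form. By the uniqueness part of \cref{cor:exp_pow_g} it is enough to identify $f_n^{(1)}$ through its Taylor coefficients. From the polynomial expansion of $V_{g,n}(\x)$ recalled in the introduction, the coefficient of $\prod_j x_j^{2\alpha_j}$ in $V_{g,n}(\x)/V_{g,n}$ equals $\paren*{c_{g,n}(\alpha)/V_{g,n}} \prod_j \tfrac{1}{2^{2\alpha_j}(2\alpha_j+1)!}$ (and vanishes once $\abso{\alpha}>3g-3+n$). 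Writing the Mirzakhani--Zograf expansion \cite[Theorem 4.1]{mirzakhani2015} for a fixed multi-index as
\[
  \frac{c_{g,n}(\alpha)}{V_{g,n}} = 1 + \frac{c^{(1)}_n(\alpha)}{g} + \O[n,\alpha]{\frac{1}{g^2}},
\]
the constant term $1$ reproduces $f_n^{(0)}(\x) = \prod_j \sinhc\div{x_j}$, and matching the coefficient of $1/g$ in \eqref{e:fn_approx} monomial by monomial yields
\[
  f_n^{(1)}(\x) = \sum_{\alpha \in \N_0^n} c^{(1)}_n(\alpha) \prod_{j=1}^n \frac{x_j^{2\alpha_j}}{2^{2\alpha_j}(2\alpha_j+1)!}.
\]
The degree truncation and the uniformity of the remainder are already furnished by \cref{theo:volume_asympt_exp} and \cref{cor:exp_pow_g}, so at this stage only the formal identification of the $1/g$-coefficient is required.

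Next I would make $c^{(1)}_n(\alpha)$ explicit. The symmetric, pairwise shape of the target formula suggests --- and the Mirzakhani--Zograf estimates should confirm --- that $c^{(1)}_n$ decomposes into a single-index and a pair-index contribution,
\[
  c^{(1)}_n(\alpha) = \frac{1}{\pi^2}\paren*{\sum_{i=1}^n p(\alpha_i) - \tfrac12 \sum_{1 \le i<j\le n} q(\alpha_i,\alpha_j)},
\]
where $p$ is a quadratic polynomial in one variable and $q$ is bilinear, each modified by an exceptional value at $\alpha_i=0$ (resp.\ at $\alpha_i=\alpha_j=0$). This modification is precisely the `boundary effect' flagged after \cref{thm:derivative_coeff}, and it is what produces the two constant `$+1$' terms in the statement. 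Extracting the exact coefficients of $p$, $q$ and of these boundary corrections from \cite{mirzakhani2015} is the genuine content of this step.

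The resummation is then routine. Writing $w_k := \tfrac{x^{2k}}{2^{2k}(2k+1)!}$, it rests on the one-variable generating identities
\begin{gather*}
  \sum_{k\ge 0} w_k = \sinhc\div{x}, \qquad \sum_{k\ge 0}(2k+1)\,w_k = \cosh\div{x}, \\
  \sum_{k\ge 0}(2k)(2k+1)\,w_k = \frac{x^2}{4}\,\sinhc\div{x},
\end{gather*}
from which any monomial $\alpha_i^a$ with $a \le 2$ resums to a linear combination of $\sinhc\div{x_i}$, $\cosh\div{x_i}$ and $\tfrac{x_i^2}{16}\sinhc\div{x_i}$. Applying these in the distinguished index (or pair of indices), while the remaining indices --- weighted trivially by $w_{\alpha_k}$ --- produce the factors $\prod_k \sinhc\div{x_k}$, the quadratic $p$ yields the bracket $\cd{x_i}-\paren*{\tfrac{x_i^2}{16}+2}\sd{x_i}$, the bilinear $q$ yields $\cd{x_i}\cd{x_j}-2\,\sd{x_i}\sd{x_j}$, and the two boundary corrections supply the summands `$+1$'. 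This is exactly the expression claimed in \cref{thm:order_two}.

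The \emph{main obstacle} is not the resummation but the exact determination of $c^{(1)}_n(\alpha)$ from the Mirzakhani--Zograf expansion: the constant $1/\pi^2$, the precise quadratic and bilinear data, and above all the boundary corrections must be pinned down, which requires carefully tracking ratios of volumes that are themselves delicate. I would guard against arithmetic slips with several consistency checks: the overall symmetry of $f_n^{(1)}$ imposed by \cref{cor:exp_pow_g}, agreement with $f_n^{(0)}=\prod_j\sinhc\div{x_j}$ at leading order, the case $n=1$, and the value at $\x = \z{n}$ compared against the Mirzakhani--Zograf expansion of $V_{g,n}$ recalled in the introduction.
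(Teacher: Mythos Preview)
Your framework and resummation are sound and match the paper's \cref{lemm:sum_p_k}: once the second-order coefficient approximation is in hand, converting it to the expression in \cref{thm:order_two} is indeed routine. The gap is precisely where you yourself flag ``the main obstacle'': you propose to read off $c_n^{(1)}(\alpha)$ from \cite[Theorem~4.1]{mirzakhani2015}, but that theorem asserts the \emph{existence} of an expansion for each fixed $\alpha$ and does not hand you the explicit dependence of the first subleading coefficient on $\alpha$ (quadratic single-index part, bilinear pair part, and the two boundary corrections at $\alpha_i=0$). Guessing the shape from the target formula and then ``extracting the exact coefficients'' is not a proof step; it is the whole computation.

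The paper does not attempt any such extraction. Instead it computes the approximation directly from Mirzakhani's topological recursion (\cref{theo:rec_mirz}): one applies $\delta_1$ to the recursion, replaces every $c_{g',n'}(\beta)$ by its first-order approximation $V_{g',n'}+\O{\abso{\beta}^2 V_{g',n'}/\jap{g'}}$ from \cref{prop:main_term_calpha}, and finds (\cref{lemm:second_order_derivative}) an explicit $\psi_{g,n}^{(1)}(\alpha)$ built from $V_{g-1,n+1}$ and $V_{g,n-1}$. Discrete integration (\cref{lemm:discrete_integral}) then yields the closed form $\hat c_{g,n}^{(1)}(\alpha)$ of \cref{prop:cgn_order_two}, including the indicator terms $\1{\alpha_i=0}$ and $\1{\alpha_i=\alpha_j=0}$ that you correctly anticipated. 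Only \emph{after} this explicit computation does the paper invoke \cite{mirzakhani2015}, and then merely for the scalar asymptotics $V_{g-1,n+1}/V_{g,n}=V_{g,n-1}/V_{g,n}=1/(8\pi^2 g)+\O[n]{1/g^2}$, which convert the prefactors $8\,V_{g-1,n+1}/V_{g,n}$ and $4\,V_{g,n-1}/V_{g,n}$ into $1/\pi^2$ and $1/(2\pi^2)$. So the key idea you are missing is: apply the recursion formula \emph{once} and feed in the order-zero approximation, rather than trying to mine the full \cite{mirzakhani2015} machinery for a closed-form $c_n^{(1)}(\alpha)$.
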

Another formulation of this statement, using the notations of
\cref{theo:volume_asympt_exp}, can be found as \cref{prop:vgn_order_two}.

\begin{expl}
  For $n=1$, we obtain
  \begin{equation}
    \pi^2 f_1^{(1)}(x)
    = \cosh \div{x} + 1
      - \paren*{\frac{x}{8} + \frac{4}{x}} \sinh  \div{x}.
  \end{equation}
  For $n=2$, in the special case where $x_1=x_2$ (which often appears
  when using Mirzakhani's integration formula, see \cref{e:expl_mirz} for instance),
  \begin{equation}
    \pi^2 f_2^{(1)}(x,x)
    = \frac{2}{x} \sinh(x) 
    - \frac{12}{x^2} \sinh^2 \div{x}
    - \cosh^2 \div{x} + \frac{4}{x} \sinh \div{x}.
  \end{equation}
\end{expl}

\subsection{Motivation to the study of random compact hyperbolic
  surfaces}
\label{sec:motiv-study-rand}

The choice of the regime $g \gg 1$ while $n \geq 1$ is fixed is motivated by its great importance in the study of random
\emph{compact} hyperbolic surfaces of \emph{large genus}.

This topic has gained increasing popularity in recent years -- see
\cite{guth2011,mirzakhani2013,mirzakhani2019,monk2021b,nie2020,wu2021,lipnowski2021} for instance.  In these articles, the
surfaces are sampled using the Weil--Petersson probability measure $\mathbb{P}_g^{\mathrm{WP}}$, obtained by
renormalising the Weil--Petersson volume form on the moduli space~$\mathcal{M}_g$ of closed hyperbolic surfaces of genus
$g$. In particular, $n = 0$, which could appear to be contradictory since we assume in this article that~$n \geq 1$.

Actually, Weil--Petersson volumes $V_{g,n}(\x)$ for $n \geq 1$ and $\x \neq \z{n}$ appear systematically when using
Mirzakhani's integration formula \cite{mirzakhani2007}, the main tool available to compute expectations and
probabilities in the Weil--Petersson setting.  This is the reason why it is absolutely essential to understand such
volumes in order to study compact hyperbolic surfaces. For instance, 
\begin{equation}
  \label{e:expl_mirz}
  \mathbb{E}^{\mathrm{WP}}_g
  \brac*{\# \left\{
      \parbox{4.2cm}{$\gamma$ primitive simple closed \\
        geodesic, non-separating,  \\
        such that $a \leq \ell(\gamma) \leq b$}
    \right\}}
  = \int_a^b \frac{V_{g-1,2}(x, x)}{2 \, V_{g,0}} \, x \d x.
\end{equation}
In \cite{mirzakhani2019}, it is in order to estimate such quantities and prove the convergence of the number of
primitive closed geodesics of length $a \leq \ell \leq b$ to a Poisson law of parameter
$\lambda_{a,b} = \int_a^b \frac{2}{x} \sinh^2 \div{x} \d x$ as $g \rightarrow + \infty$, that Mirzakhani and Petri
compute the first-order approximation of $V_{g,n}(\x)$.

This first-order estimate has since then been used by Wu--Xue \cite{wu2021} and Lipnowski--Wright \cite{lipnowski2021}
in two independent proofs of the fact that the first non-zero eigenvalue $\lambda_1$ of the Laplace--Beltrami operator satisfies
\begin{equation}
  \label{e:3_16}
  \forall \epsilon > 0, \quad
  \lim_{g \rightarrow + \infty} \mathbb{P}_g^{\mathrm{WP}}
  \brac*{\lambda_1 \geq\frac{3}{16} -\epsilon}  = 1.
\end{equation}
Proving that \eqref{e:3_16} still holds if we replace the number $\frac{3}{16}$ by $\frac{1}{4}$, which would then be
optimal by \cite{cheng1975}, is a very active topic. This was achieved very recently for random covers of non-compact
surfaces by Hide and Magee \cite{hide2021}, but is still an open problem in the Weil--Petersson setting and for random
covers of compact surfaces.

As explained in \cite[Section 6.1.2]{monk2021a}, replacing $\frac{3}{16}$ by the `natural' next step, $\frac{2}{9}$,
requires amongst other things a second-order expansion such as \cref{thm:order_two}. Ultimately, we believe that
obtaining the optimal value~$\frac 14$ will require estimates with errors of size $1/g^N$ for arbitrarily large $N$, and
this is the core motivation behind this article.

\subsection{Organisation of the paper}

This article is organised as follows.
\begin{itemize}
\item In \cref{sec:prereq}, we review the different classic tools that are required to study the Weil--Petersson volume
  $V_{g,n}(\x)$. Notably, we provide a comprehensive introduction to the topological recursion formula satisfied by these
  functions proved by \cite{mirzakhani2007}, as well as a throughout proof of the first-order expansion from
  \cite{mirzakhani2019}.
\item In \cref{s:second_order}, we compute our new second-order expansion, \cref{thm:order_two}. This allows us to
  introduce a few notations and ideas that are useful to the proof of the higher-order expansion.
\item We then prove the estimate on the discrete derivatives $\delta^{\mathbf{m}}c_{g,n}(\alpha)$ of the volume
  coefficients, \cref{thm:derivative_coeff}, in \cref{sec:proof:coeff}. The proof proceeds by induction on the absolute
  value of the Euler characteristic $\abso{\chi} = 2g-2+n$, and the use of Mirzakhani's topological recursion formula.
\item Finally, we prove a shifted discrete Taylor expansion in \cref{sec:funct-ultim-polyn}. It allows us to
  approximate the coefficients $c_{g,n}(\alpha)$ by functions almost polynomial in $\alpha$, and hence conclude to
  \cref{theo:volume_asympt_exp} and \cref{cor:exp_pow_g}.
\end{itemize}

\paragraph{Acknowledgements}

The authors would like to thank Michael Lipnowski, St\'ephane Nonnenmacher, Bram Petri and Alex Wright for helpful
discussions. We furthermore thank the referee for their valuable comments on a previous version of this paper, which
motivated us to significantly improve the presentation of this new version.

\section{Preliminaries: Weil--Petersson volumes and Mirzakhani's topological recursion}
\label{sec:prereq}

In this section, we shall present some of the tools that are essential to the study of the total volume $V_{g,n}(\x)$ of
the moduli space of bordered hyperbolic surfaces $\mathcal{M}_{g,n}(\x)$. Notably, we will explain in detail
Mirzakhani's topological recursion formula proved in \cite{mirzakhani2007}, which allows to compute the volumes~$V_{g,n}(\x)$ recursively.

\subsection{Polynomial expression}

By \cite[Theorem 6.1]{mirzakhani2007}, the function $\x \mapsto V_{g,n}(\x)$ is a polynomial function that can be
written as
\begin{equation}
  \label{theo:mirz_coeff}
  V_{g,n}(\x) = \sum_{\abso{\alpha} \leq 3g-3+n} c_{g,n}(\alpha) \prod_{j=1}^n
  \frac{x_j^{2\alpha_j}}{2^{2\alpha_j}(2\alpha_j+1)!}\cdot
\end{equation}  
The polynomial $V_{g,n}(\x)$ is symmetric in the variables $x_1, \ldots, x_n$, and the coefficients $c_{g,n}(\alpha)$
are therefore invariant by permutation of the multi-index~$\alpha$. For convenience, we extend the definition of $c_{g,n}(\alpha)$
to any multi-index $\alpha \in \mathbb{Z}^n$, by setting it to be equal to zero unless it already defined by
\eqref{theo:mirz_coeff}.

The expression of the Weil--Petersson volume polynomial is known for surfaces of Euler characteristic $\chi = -1$,
i.e. for the pair of pants (of signature $(0,3)$) and the once-holed torus (of signature $(1,1)$). Indeed, there is only
one hyperbolic pair of pants with three boundary geodesics of prescribed lengths \cite[Theorem 3.1.7]{buser1992}, and
therefore $\mathcal{M}_{0,3}(\x)$ is reduced to an element and $V_{0,3}(\x)$ is the constant polynomial equal to
$1$. N\"{a}\"{a}t\"{a}nen and Nakanishi proved in \cite{naatanen1998} that for all $x \geq 0$,
\begin{equation*}
  V_{1,1}(x)  = \dfrac{\pi^2}{6} + \dfrac{x^2}{24}\cdot
\end{equation*}

The choice of the normalisation by $2^{2 \alpha_j}(2 \alpha_j+1)!$ in \cref{theo:mirz_coeff} is partly motivated by the
fact that it allows to interpret the coefficients $c_{g,n}(\alpha)$ as intersection numbers -- see \cite{mirzakhani2007a}. It furthermore simplifies the topological recursion formula that the
coefficients satisfy, which we shall now present.

\subsection{Mirzakhani's topological recursion formula}
\label{s:recu_form}

Whenever the number of boundary components $n$ is different from $0$, the volume polynomial $V_{g,n}(\x)$, and
therefore its coefficients $(c_{g,n}(\alpha))_\alpha$, can be computed using a topological recursion formula
proved by Mirzakhani in \cite{mirzakhani2007}.

More precisely, the coefficients $(c_{g,n}(\alpha))_{\alpha}$ of the volume $V_{g,n}(\x)$ can be expressed as a linear
combination of the coefficients of certain volumes $V_{g',n'}(\x)$, with $n' \geq 1$ and $|\chi'| = 2g'-2+n'$ is strictly smaller than $|\chi| = 2g-2+n$. This ultimately allows the computation of all volume
polynomials $V_{g,n}(\x)$ with non-zero $n$, starting only with the expressions for the volumes $V_{g,n}(\x)$ when
$\abso{\chi} = 1$, which are already known.

\paragraph{Topological enumeration}

In order to state the recursion formula, and the numerous terms it contains, let us first sketch out its
topological interpretation. We consider a bordered hyperbolic surface $X \in \mathcal{M}_{g,n}(\x)$. Our
objective is to `construct' $X$ using smaller pieces.  One way to do so is the following. We focus on one
boundary component of $X$: the first one, $b_1$, for instance. We will try to remove  a
pair of pants containing the boundary component $b_1$ from the surface $X$. Since the Euler characteristic of the pair of pants
is $-1$, the Euler characteristic obtained after removing the pair of pants will decrease in absolute value.

There are many topological types of embedded pairs of pants bounded by~$b_1$. They can be
arranged in three categories.
\begin{enumerate}
\item[(A)] Pairs of pants with two boundary components from $\partial X$, the component $b_1$
  and $b_j$ for a $j \in \{2, \ldots, n\}$. Then, the signature of the surface obtained when
  removing this pair of pants is $(g,n-1)$, with $n-1 \geq 1$.
  
  \begin{center}
    \includegraphics[scale=0.22]{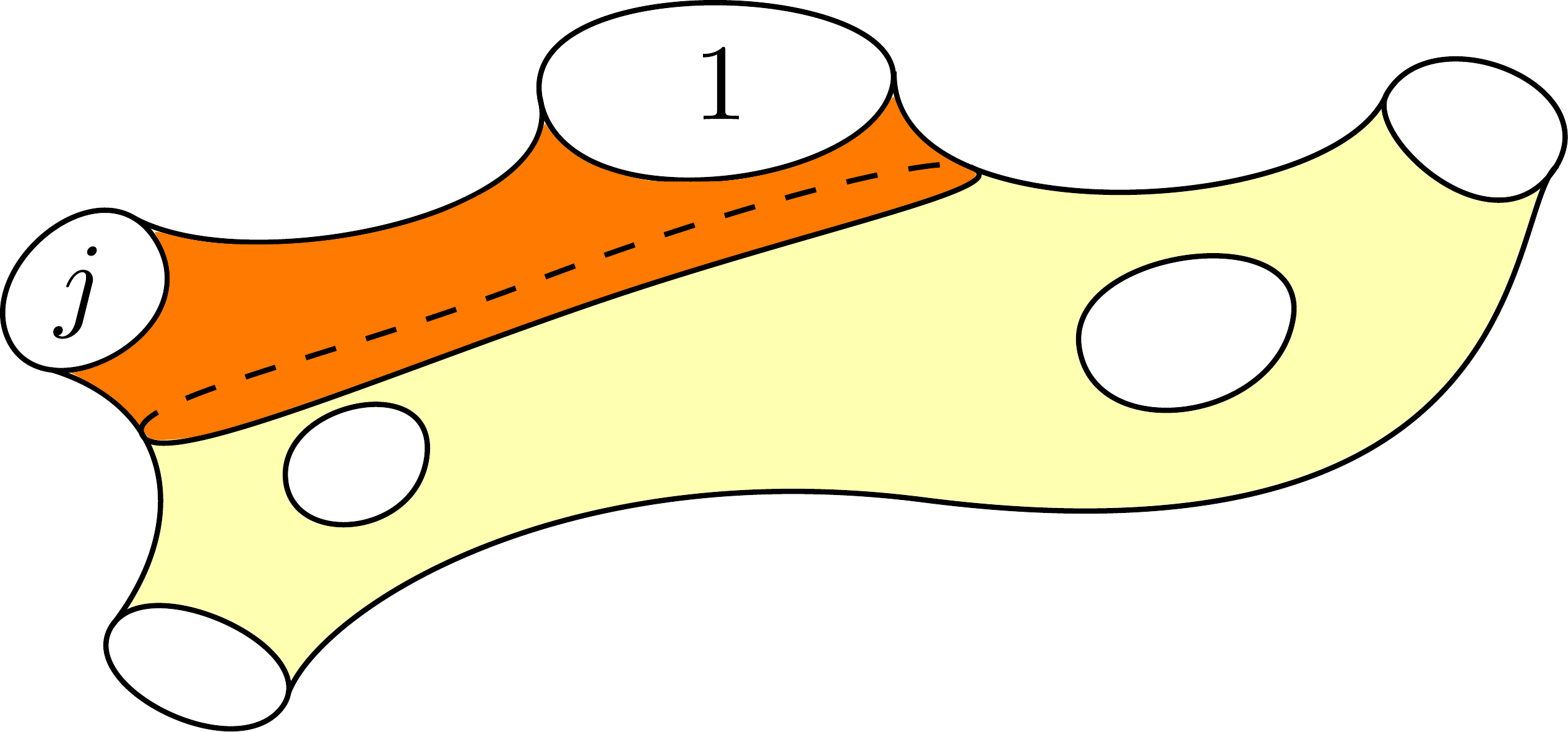}  \hspace{0.5cm}
    \includegraphics[scale=0.9]{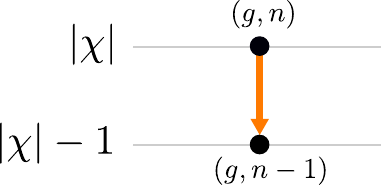}
  \end{center}
\item[(B)] \emph{Non-separating} pairs of pants, that is to say pairs of pants delimited by the
  boundary component $b_1$ and two inner curves, and such that the surface obtained when
  removing the pair of pants is still connected. The signature of the complement is then always equal to $(g-1,n+1)$.

  \begin{center}
      \includegraphics[scale=0.22]{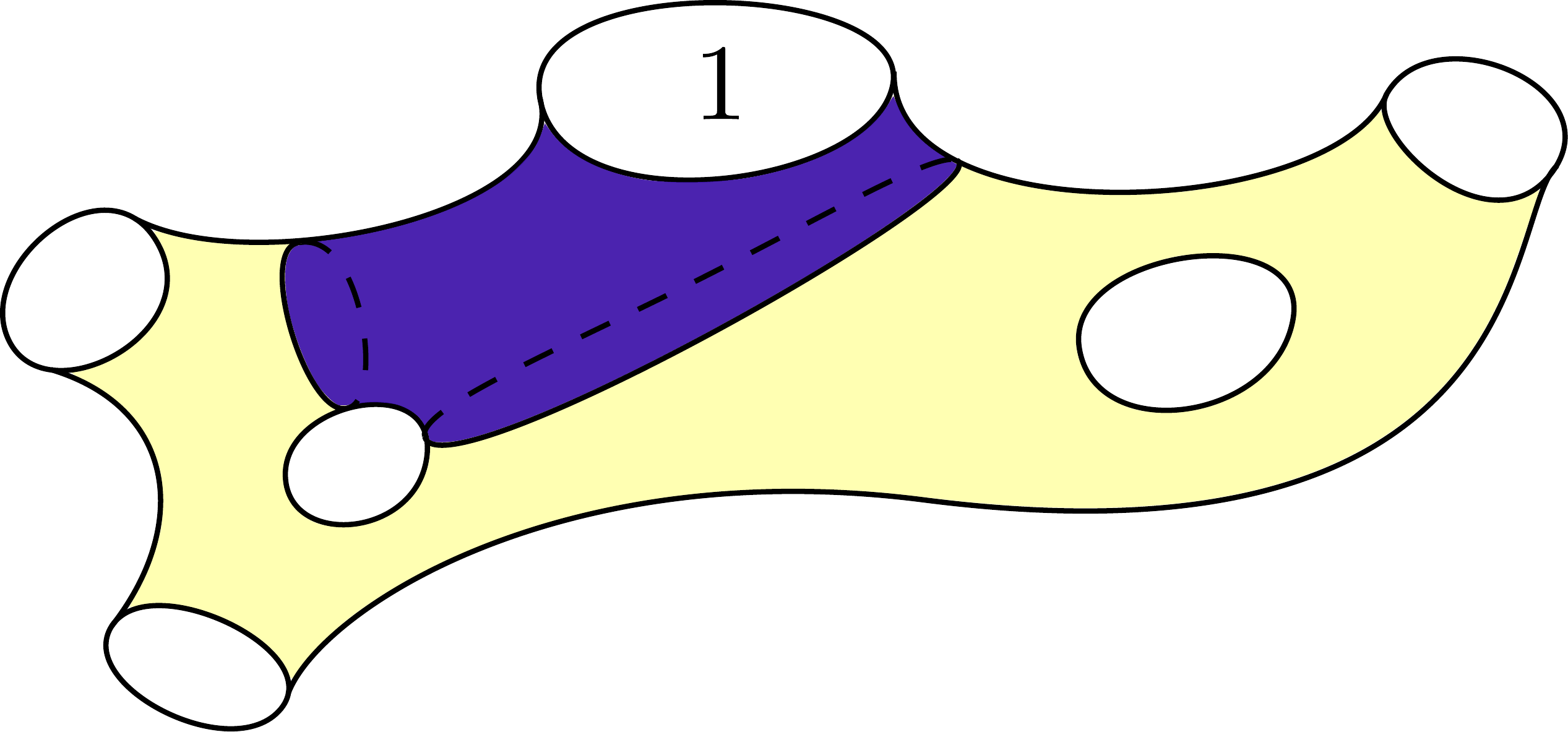}  \hspace{0.5cm}
  \includegraphics[scale=0.9]{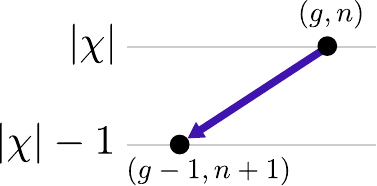}
  \end{center}
\item[(C)] \emph{Separating} pairs of pants, that is to say pairs of pants delimited by the boundary
  component $b_1$ and two inner curves, and which separate the surface into two connected
  components. The topological situation can then be entirely described by the genus $g'$ of one of the
  components (the other genus being $g-g'$), and a partition $(I,J)$ of the boundary components
  $\{2, \ldots, n\}$ of $X$. Note that the only cases which will appear are those for which
  $2 g' - 2 + |I|+1 > 0$ and $2 (g-g') - 2 + |J|+1 > 0$. Let $\mathcal{I}_{g,n}$ denote the set of
  all these topological possibilities.
  
  \begin{center}
    \includegraphics[scale=0.22]{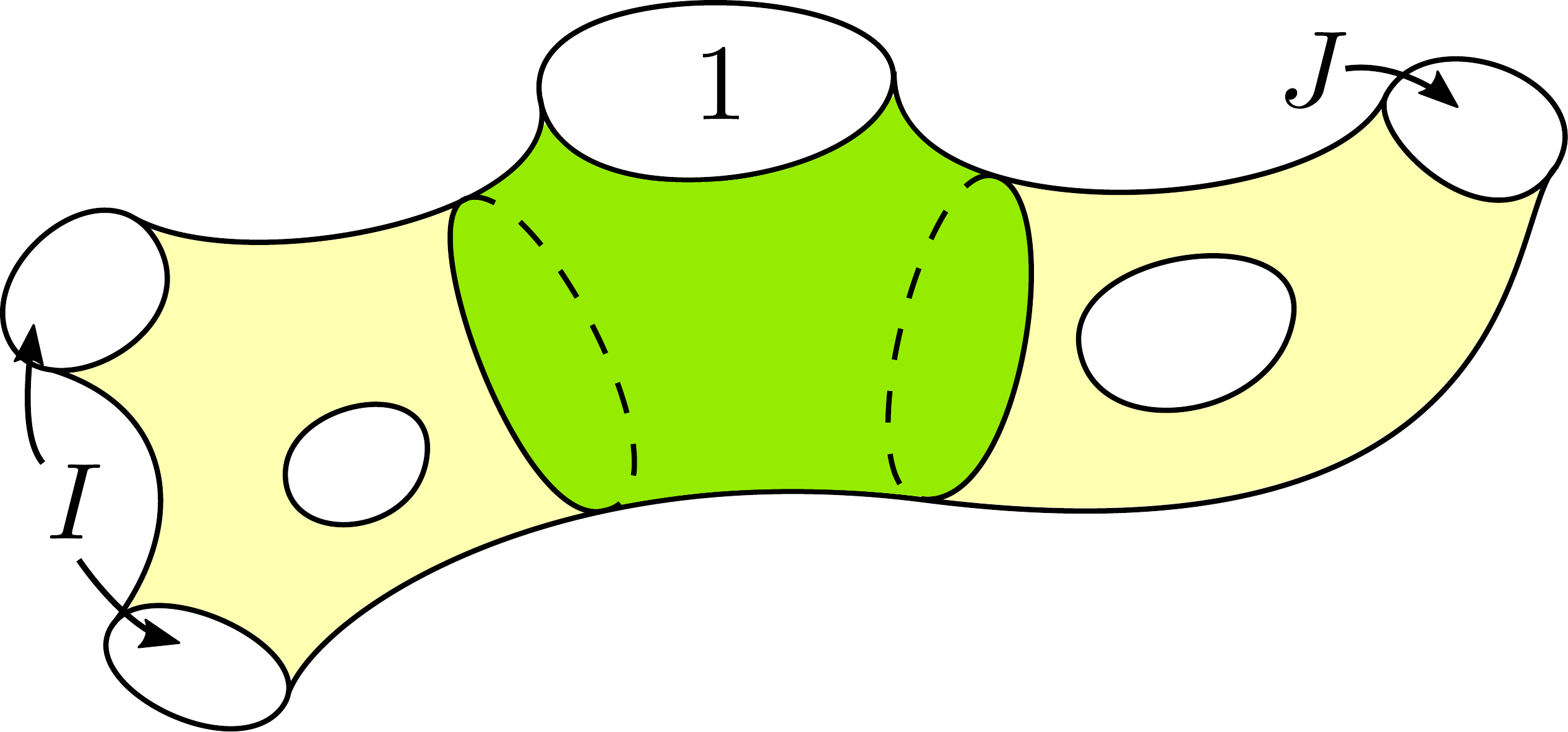} \hspace{0.5cm}
    \includegraphics[scale=0.9]{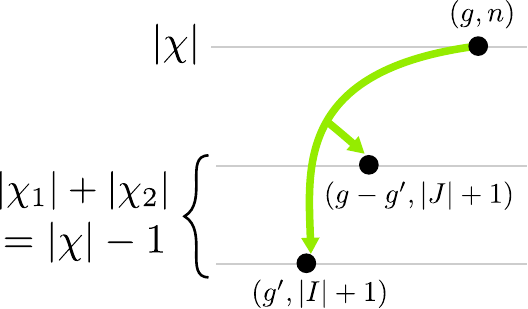}
  \end{center}
\end{enumerate}

\paragraph{The formula}

The coefficients of the volume $V_{g,n}(\x)$ can be expressed as a linear combination of the
coefficients of all the embedded surfaces we encountered in this enumeration.

\begin{thm}[{\cite{mirzakhani2007}}]
  \label{theo:rec_mirz}
  The coefficients of the volume polynomial $V_{g,n}(\x)$ can be written as a sum of three
  contribution, corresponding to the cases (A-C):
  \begin{equation}
    \label{e:recurrence_decomp}
    c_{g,n}(\alpha)
    = \sum_{j=2}^n \mathcal{A}_{g,n}^{(j)}(\alpha) + \mathcal{B}_{g,n}(\alpha) + \sum_{\iota \in \mathcal{I}_{g,n}} \mathcal{C}_{g,n}^{(\iota)}(\alpha).
  \end{equation}
  Each of these terms is a combination of coefficients of the volumes of the corresponding embedded
  surfaces:
  \begin{align}
    \label{e:rec_other_bound}
    \mathcal{A}_{g,n}^{(j)}(\alpha)
    & = 8 \; (2\alpha_j+1) \sum_{i=0}^{+\infty}  
      u_{i} \; c_{g,n-1}(i+\alpha_1+\alpha_j-1,\alpha_2,\ldots,\hat{\alpha}_j,\ldots,\alpha_n) \\
    \label{e:rec_non_sep}
    \mathcal{B}_{g,n}(\alpha)
    & =  16  \sum_{i=0}^{+\infty} \sum_{k_1+k_2=i+\alpha_1-2}
      u_i \; c_{g-1,n+1}(k_1,k_2,\alpha_2,\ldots,\alpha_n)\\
    \label{e:rec_sep}
    \mathcal{C}_{g,n}^{(\iota)}(\alpha)
    & =  16 \sum_{i=0}^{+\infty} \sum_{k_1+k_2=i+\alpha_1-2}
      u_{i} \;  c_{g',|I|+1}(k_1,\alpha_{I}) \; c_{g-g',|J|+1}(k_2, \alpha_{J}),
  \end{align}
  where for any $i \geq 0$, 
  \begin{equation*}
    u_i = \left\{
      \begin{tabular}{ll}
        $\zeta(2i) (1-2^{1-2i})$ & when $i>0$ \\
        $\frac{1}{2}$ & when $i=0$.
      \end{tabular}
    \right.
  \end{equation*} 
\end{thm}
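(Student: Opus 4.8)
The plan is to derive this coefficient recursion from Mirzakhani's \emph{integrated} topological recursion, which itself rests on two ingredients: the generalized McShane--Mirzakhani identity on a single bordered surface, and Mirzakhani's integration (unfolding) formula over moduli space. First I would recall the generalized McShane identity on $X \in \mathcal{M}_{g,n}(\x)$: the length $x_1$ of the first boundary decomposes as a sum, over all embedded pairs of pants one of whose boundary components is $b_1$, of explicit geometric contributions $\mathcal{D}(x_1,\ell_1,\ell_2)$ and $\mathcal{R}(x_1,x_j,\ell)$ depending only on $x_1$, the lengths of the remaining boundaries of the pair of pants, and (in the first case) on $x_j$. Grouping these pants by the mapping class group orbits of the interior geodesics that bound them reproduces exactly the topological trichotomy (A)--(C) of the enumeration above.

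Next I would integrate this identity over $\mathcal{M}_{g,n}(\x)$ against the Weil--Petersson volume form. The left-hand side equals $x_1 V_{g,n}(\x)$. On the right, Mirzakhani's integration formula unfolds each orbit sum into an integral over the product of moduli spaces of the surface(s) obtained by cutting $X$ along the relevant geodesics, weighted by the Weil--Petersson volumes of the cut pieces, by the factor $\prod \ell \, \d\ell$ coming from the symplectic reduction along the length--twist (Fenchel--Nielsen) coordinates, and divided by the order of the symmetry group of the curve system. Differentiating the resulting identity in $x_1$ replaces the kernels $\mathcal{D}$, $\mathcal{R}$ by their $x_1$-derivatives, which are elementary combinations of functions of the form $1/(1+e^{(\cdot)/2})$, and yields the analytic recursion
\[
  \frac{\partial}{\partial x_1}\bigl(x_1 V_{g,n}(\x)\bigr)
  = (\text{A-integral}) + (\text{B-integral}) + (\text{C-integral}),
\]
where each integral pairs such a kernel against a polynomial volume $V_{g,n-1}$, $V_{g-1,n+1}$, or a product $V_{g',|I|+1}\,V_{g-g',|J|+1}$.

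The remaining step is computational: extract the coefficients. Since every volume on the right is a polynomial in the squared boundary lengths with the normalization of \eqref{theo:mirz_coeff}, and since the kernels have explicit odd moments
\[
  \int_0^\infty t^{2i-1} \, \frac{1}{1+e^{t/2}} \, \d t
  \quad\text{(and a few close relatives)},
\]
all the integrals collapse to these one-dimensional moments. Evaluating them via the Dirichlet eta function produces exactly the constants $u_i = \zeta(2i)(1-2^{1-2i})$, with the separate boundary value $u_0 = 1/2$, while the accompanying powers of $2$ and factorials are absorbed into the normalization and the overall numerical prefactors $8$ and $16$. Matching the coefficient of $\prod_j x_j^{2\alpha_j}$ on both sides, and using $\partial_{x_1}\bigl(x_1 \cdot x_1^{2\alpha_1}\bigr) = (2\alpha_1+1)\,x_1^{2\alpha_1}$ to account for the $x_1$-derivative and the factor $(2\alpha_j+1)$ in \eqref{e:rec_other_bound}, then yields \eqref{e:rec_other_bound}--\eqref{e:rec_sep}.

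The main obstacle I anticipate is not this final bookkeeping but the unfolding step itself. One must verify that the mapping class group orbits of the curve systems attached to $b_1$ are in bijection with the topological data enumerated in (A)--(C), and that the symmetry factors come out correctly: the order-two symmetry exchanging the two interior curves in cases (B) and (C), and the degenerate configurations $g' = g-g'$, $I = J$ in case (C), are exactly where the prefactor $16$ and the structure of the index set $\mathcal{I}_{g,n}$ are decided. Establishing Mirzakhani's integration formula rigorously --- in particular computing the Weil--Petersson volume of the fibers of the length map and checking its compatibility with cutting --- is where the bulk of the genuine work lies, and the factor of $2$ separating case (A) from cases (B)/(C) is the most error-prone point.
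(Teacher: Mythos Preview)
The paper does not prove this theorem at all: it is stated with the attribution \cite{mirzakhani2007} and taken as a black box, with only the topological enumeration (A)--(C) given as motivation. There is therefore no ``paper's own proof'' to compare your proposal against.

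That said, your outline is a faithful sketch of Mirzakhani's original argument in \cite{mirzakhani2007}: the generalized McShane identity, integration over moduli space via the unfolding lemma, differentiation in $x_1$, and extraction of coefficients via the odd moments of $1/(1+e^{t/2})$ yielding the constants $u_i$. Your identification of the delicate points --- the orbit classification, the symmetry factors in cases (B) and (C), and the factor-of-two discrepancy between the prefactors $8$ and $16$ --- is accurate; these are indeed where the care is needed in Mirzakhani's paper. For the purposes of the present article none of this is required, since the recursion is used only as an input.
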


Note that all of the sums in the previous statement are finite because a coefficient $c_{g',n'}(\beta)$ is always equal to zero
if $|\beta| > 3g'-3+n'$, and therefore non-zero terms always satisfy $i \leq 3g-3+n-|\alpha|$.

\begin{expl}
  The coefficients that intervene when computing $V_{g,n}(\x)$ for each $(g,n)$ such that
  $|\chi| \leq 3$ are represented by the arrows in Figure~\ref{fig:recursion_graph_32}.

\begin{figure}[h]
  \centering \includegraphics[scale=1.3]{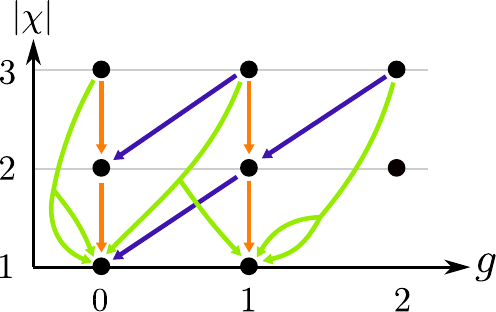}
  \caption{Dependency of the coefficients of the volume polynomials $V_{g,n}(\x)$ when
    $|\chi|=2g-2+n \leq 3$. Note that all the coefficients for which $n \neq 0$ can therefore be
    computed thanks to the coefficients for which $|\chi| = 1$.}
  \label{fig:recursion_graph_32}
\end{figure}
\end{expl}

\paragraph{Sequence $(u_i)_i$ and first properties}

In order to use the topological recursion formula stated in \cref{theo:rec_mirz}, we need some
information of the sequence $(u_i)_i$ that appears in it.

\begin{lem}[{\cite[Lemma 3.1]{mirzakhani2013}}]
  \label{lemm:behaviour_u_i}
  The sequence $(u_i)_i$ is increasing, converges to~$1$ as $i$ approaches
  infinity, and there exists a constant $C>0$ such that
  \begin{equation}
    \label{e:speed_convergence_u}
    \forall i, \quad 0 \leq u_{i+1} - u_i \leq \frac{C}{4^i} \cdot 
  \end{equation}
\end{lem}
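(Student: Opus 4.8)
The plan is to identify $(u_i)$ with the even values of the Dirichlet eta function and exploit the resulting alternating-series representation. Starting from the definition, for $i \geq 1$ I would rewrite
\[
u_i = (1 - 2^{1-2i}) \zeta(2i)
= \sum_{n=1}^\infty \frac{1}{n^{2i}} - 2\sum_{n=1}^\infty \frac{1}{(2n)^{2i}}
= \sum_{n=1}^\infty \frac{(-1)^{n-1}}{n^{2i}},
\]
where the middle step uses $2^{1-2i}\zeta(2i) = 2\sum_n (2n)^{-2i}$, and the rearrangement (subtracting twice the even-indexed terms) is justified by the absolute convergence of $\zeta(2i)$ for $i \geq 1$. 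The value $u_0 = 1/2$ falls outside this representation, since the series diverges at $s = 0$, and will be handled separately by hand. Convergence to $1$ then follows immediately: isolating the $n=1$ term, the quantity $u_i - 1 = \sum_{n \geq 2}(-1)^{n-1} n^{-2i}$ is an alternating series whose terms decrease in absolute value, so $\abso{u_i - 1} \leq 2^{-2i} = 4^{-i} \to 0$.

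For the difference, I would compute, for $i \geq 1$,
\[
u_{i+1} - u_i
= \sum_{n=2}^\infty (-1)^{n-1}\left(\frac{1}{n^{2i+2}} - \frac{1}{n^{2i}}\right)
= \sum_{n=2}^\infty (-1)^n \, a_n, \qquad a_n := \frac{n^2 - 1}{n^{2i+2}} > 0,
\]
the $n=1$ contribution vanishing. The key computation is to verify that, for each fixed $i$, the sequence $(a_n)_{n \geq 2}$ is decreasing: writing $a_n = f(n)$ with $f(x) = x^{-2i} - x^{-2i-2}$, one finds $f'(x) = x^{-2i-3}\left((2i+2) - 2i\,x^2\right) < 0$ for $x \geq 2$ and $i \geq 1$. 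Hence $u_{i+1} - u_i = a_2 - a_3 + a_4 - \cdots$ is an alternating series with positive, decreasing terms, which forces $0 < u_{i+1} - u_i \leq a_2 = \tfrac{3}{4}\, 4^{-i}$. This yields simultaneously the monotonicity for $i \geq 1$ and the claimed bound with $C = 3/4$.

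Finally I would dispatch the boundary index $i = 0$ directly: $u_1 = (1 - \tfrac12)\zeta(2) = \pi^2/12$, so $u_1 - u_0 = \pi^2/12 - 1/2 > 0$, and since $\pi^2/12 - 1/2 \approx 0.32 < 3/4$, the single constant $C = 3/4$ in fact works for all $i \geq 0$. I expect the only mildly delicate point to be the monotonicity of $(a_n)_{n \geq 2}$, which is what lets the alternating-series test apply uniformly in $i$ and deliver the geometric rate $4^{-i}$; everything else reduces to the eta-function identity and the separate treatment of the exceptional value $u_0$.
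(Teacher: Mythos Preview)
Your proof is correct. The paper does not actually supply a proof of this lemma; it simply quotes it from \cite[Lemma 3.1]{mirzakhani2013} and uses it as a black box throughout. So there is no in-paper argument to compare against.

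For what it is worth, your route via the eta-function identity $u_i = \eta(2i) = \sum_{n \geq 1} (-1)^{n-1} n^{-2i}$ and the alternating-series test is clean and gives an explicit constant $C = 3/4$, which is sharper than what the paper requires (only the existence of \emph{some} $C$ is ever used, together with the convergence of $\sum_i (u_{i+1}-u_i)$ and $\sum_i i(u_{i+1}-u_i)$). The only step that deserves a second glance is the monotonicity of $a_n = (n^2-1)/n^{2i+2}$ for $n \geq 2$, and your derivative computation $f'(x) = x^{-2i-3}\bigl((2i+2) - 2i x^2\bigr) < 0$ for $x \geq 2$, $i \geq 1$ settles it. The boundary case $i=0$ is handled correctly by the direct numerical check $u_1 - u_0 = \pi^2/12 - 1/2 \in (0, 3/4)$.
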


We can deduce from the monotonicity of the sequence $(u_i)_i$ the fact that the coefficients
$(c_{g,n}(\alpha))_\alpha$ are decreasing functions of $\alpha$ in the following sense.
\begin{lem}
  \label{cor:c_d_decrease}
  We define the following partial order on multi-indices:
  \begin{equation*}
    \alpha \leq \tilde \alpha \quad
    \Leftrightarrow \quad \forall j \in \{1, \ldots, n\}, \alpha_j \leq \tilde \alpha_j.
  \end{equation*}
  Then, the coefficients $(c_{g,n}(\alpha))_\alpha$ decrease with the
  multi-index $\alpha \in \N_{0}^n$.  In particular,
  \begin{equation}
    \label{eq:c_alpha_Vgn}
    \forall \alpha \in \N_0^n, \quad
    0 \leq c_{g,n}(\alpha) \leq V_{g,n}.
  \end{equation}
\end{lem}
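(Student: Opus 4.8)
The plan is to prove the two assertions in turn, exploiting the sign structure of Mirzakhani's recursion (\cref{theo:rec_mirz}) together with the monotonicity of $(u_i)_i$ from \cref{lemm:behaviour_u_i}. First I would establish nonnegativity, $c_{g,n}(\alpha) \geq 0$, by induction on $\abso{\chi} = 2g-2+n$. For the base case $\abso{\chi}=1$ one reads off the explicit polynomials $V_{0,3} = 1$ and $V_{1,1}(x) = \pi^2/6 + x^2/24$ recalled above, whose coefficients are manifestly nonnegative. For the inductive step (so $\abso{\chi} \geq 2$ and $n \geq 1$) I apply \eqref{e:recurrence_decomp}: each term $\mathcal{A}_{g,n}^{(j)}$, $\mathcal{B}_{g,n}$, $\mathcal{C}_{g,n}^{(\iota)}$ is a sum of products of the nonnegative weights $u_i$ and of coefficients $c_{g',n'}(\beta)$ of strictly smaller complexity, hence nonnegative by the induction hypothesis. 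The reduced signatures $(g,n-1)$, $(g-1,n+1)$, $(g',\abso{I}+1)$, $(g-g',\abso{J}+1)$ all have at least one boundary component, so the induction stays within the scope of the recursion.

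Next I would prove monotonicity. Since $c_{g,n}$ is symmetric in $\alpha$, it suffices to show that increasing a single coordinate cannot increase the coefficient, and by symmetry I may take that coordinate to be the first; chaining such single-step inequalities then yields $c_{g,n}(\alpha) \geq c_{g,n}(\tilde\alpha)$ whenever $\alpha \leq \tilde\alpha$. Concretely I must check $\delta_1 c_{g,n}(\alpha) \geq 0$, where $\delta_1 v(\alpha) = v(\alpha) - v(\alpha_1+1, \alpha_2, \ldots, \alpha_n)$. The case $\abso{\chi}=1$ is immediate from the explicit coefficients, so when $\abso{\chi} \geq 2$ I again apply \eqref{e:recurrence_decomp} and compute $\delta_1$ term by term, now using only the nonnegativity already established.

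The crux — and the step to handle with care — is the effect of $\alpha_1 \mapsto \alpha_1 + 1$ inside each term. In $\mathcal{A}_{g,n}^{(j)}$ the prefactor $8(2\alpha_j+1)$ is untouched (as $j \geq 2$) and the argument $i + \alpha_1 + \alpha_j - 1$ is shifted up by one; in $\mathcal{B}_{g,n}$ and $\mathcal{C}_{g,n}^{(\iota)}$ the inner convolution over $k_1 + k_2 = i + \alpha_1 - 2$ becomes a convolution over $k_1 + k_2 = i + \alpha_1 - 1$. In all three cases this unit shift of the argument is exactly a unit shift of the summation index $i$, so, writing the relevant inner quantity as a single nonnegative function $S(\cdot)$ of its total argument (one coefficient for $\mathcal{A}$, a sum of coefficients or of products of two coefficients for $\mathcal{B}$, $\mathcal{C}$), the discrete derivative takes the form $\sum_{i \geq 0} u_i \bigl(S(i+t) - S(i+t+1)\bigr)$ for a suitable offset $t$. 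A summation by parts rewrites this as $u_0 S(t) + \sum_{i \geq 1}(u_i - u_{i-1}) S(i+t)$, which is nonnegative because $u_0 = \tfrac12 > 0$, the increments $u_i - u_{i-1}$ are nonnegative (\cref{lemm:behaviour_u_i}), and $S \geq 0$ by the first part. Summing over $j$ and over $\iota$ gives $\delta_1 c_{g,n}(\alpha) \geq 0$. The tempting wrong turn here is to try to show that the convolution $S$ is itself decreasing in its total argument, which is false in general; the point is rather that the shift should be absorbed into the index $i$ and discharged against the monotonicity of $(u_i)_i$.

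Finally, since $V_{g,n} = c_{g,n}(\z{n})$ and $\z{n} \leq \alpha$ for every $\alpha \in \N_0^n$, the monotonicity just proved gives $c_{g,n}(\alpha) \leq V_{g,n}$, and combined with nonnegativity this yields $0 \leq c_{g,n}(\alpha) \leq V_{g,n}$, as claimed.
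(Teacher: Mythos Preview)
Your proof is correct and follows essentially the same approach as the paper's. The paper compresses the argument by adopting the convention $u_i = 0$ for $i < 0$ and reindexing the inner sums over $k_1, k_2$ so that the difference $u_{k_1+k_2+2-\alpha_1} - u_{k_1+k_2+2-\tilde\alpha_1}$ appears directly---this is exactly your summation-by-parts identity in disguise---and it leaves the nonnegativity induction implicit.
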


\begin{proof}
  By symmetry of the coefficients, we can reduce the problem to proving that for any multi-indices
  $\alpha$ and $\tilde \alpha = (\tilde \alpha_1, \alpha_2, \ldots, \alpha_n)$ such that
  $\tilde \alpha_1 \geq \alpha_1$, $c_{g,n}(\tilde \alpha) \leq c_{g,n}(\alpha)$.  More precisely,
  we will show that every single term in \cref{e:recurrence_decomp} is smaller for the index
  $\tilde \alpha$ than it is for $\alpha$. The method being the same for every contribution, so we
  only detail the proof of the fact that
  $\mathcal{B}_{g,n}(\tilde \alpha) \leq \mathcal{B}_{g,n}(\alpha)$. By \cref{e:rec_non_sep}, if we use the convention
  $u_i = 0$ for $i<0$,
  \begin{multline*}
    \mathcal{B}_{g,n}(\alpha)
    - \mathcal{B}_{g,n}(\tilde \alpha) \\
    =  16 \sum_{k_1,k_2 \geq 0}
    (\underbrace{u_{k_1+k_2+2-\alpha_1} - u_{k_1+k_2+2-\tilde \alpha_1}}_{\geq 0}) \;
    c_{g-1,n+1}(k_1,k_2,\alpha_2,\ldots,\alpha_n) \geq 0.
  \end{multline*}
\end{proof}

\subsection{Estimates of ratios of Weil--Petersson volumes}
\label{sec:first-estim-rati}

Let us now review known estimates on ratios of Weil--Petersson volumes in the large-genus limit. These
properties have been established in \cite{mirzakhani2013} using several recursion formulas for
Weil--Petersson volumes \cite{mirzakhani2007,do2009,liu2009}, amongst which the one presented in \cref{s:recu_form}.

\paragraph{Same Euler characteristic}

Since two surfaces with the same Euler characteristic are at the same height in the recursion
formula, one could expect the volumes $V_{g,n}$ and $V_{g-1,n+2}$ to be of similar size.  This
is indeed the case: by \cite[Theorem 3.5]{mirzakhani2013}, for all $n \geq 0$, there is a constant
$C_n >0$ such that for any integer $g\geq 0$ satisfying $2g-2+n>0$,
\begin{equation}
  \label{e:volume_same_euler}
  \abso*{\frac{V_{g-1,n+2}}{V_{g,n}} - 1} \leq \frac{C_n}{\jap{g}} \cdot
\end{equation}

\paragraph{Adding a cusp}

We can furthermore compare $V_{g,n}$ and $V_{g,n+1}$ using \cite[Lemma 3.2]{mirzakhani2013}: for any
$g, n \geq 0$ such that $2g-2+n>0$,
\begin{equation}
  \label{e:volume_n_plus_one}
  \frac{1}{12} \paren*{1 - \frac{\pi^2}{10}}
  < \frac{(2g-2+n)V_{g,n}}{V_{g,n+1}} < \frac{\pi \cosh(\pi) - \sinh(\pi)}{2\pi^2} \cdot
\end{equation}
The fact that $V_{g,n+1}$ grows roughly like $(2g-2+n) V_{g,n}$ can be interpreted the following
way: in order to sample a surface of signature $(g,n+1)$, we can start by sampling a surface of
signature $(g,n)$. We then need to decide where to add a cusp, by picking a point on the surface of
area proportional to $2g-2+n$.

\paragraph{Cutting into two smaller surfaces}

Since we can cut surfaces of signature $(g,n)$ into two surfaces of respective signatures $(g_1,n_1+1)$ and
$(g_2,n_2+1)$ with $g_1+g_2=g$ and $n_1+n_2=n$, one could expect the product
$V_{g_1,n_1+1} \times V_{g_2,n_2+1}$ to be of similar size as $V_{g,n}$. Actually, these quantities are much
smaller. Indeed, by \cite[Lemma 3.3]{mirzakhani2013}, for any $n \geq 0$, there exists a constant $C_n>0$
satisfying the following. For any integer $g \geq 0$ such that $2g-2+n>0$ and any integers $n_1$, $n_2$ such that
$n_1+n_2=n$,
\begin{equation}
  \label{e:volume_sum}
  \sum_{\substack{g_1+g_2=g \\ 2g_i+n_i>1}} V_{g_1,n_1+1} V_{g_2,n_2+1} \leq C_n \frac{V_{g,n}}{\jap{g}} \cdot
\end{equation}
The presence of this decay in $1/\jap{g}$ is linked to the fact that typical surfaces of large genus
are very well-connected, and therefore quite difficult to cut into smaller pieces -- a concrete manifestation of this phenomenon can be found in the
comparison of Theorem
4.2 and Theorem 4.4 in \cite{mirzakhani2013}.

\paragraph{Cutting into more surfaces}

In this article, we will need a new version of \cref{e:volume_sum} with additional powers of the
genus.

\begin{lem} 
  \label{lem:volume_sum_K}
  Let $n, N_1, N_2 \geq 0$ be integers. There exists a constant $C_{n,N_1,N_2}$ satisfying the following. For any
  integer $g \geq 0$
  such that $2g-2+n>0$ and any integers $n_1$, $n_2$ such that $n_1+n_2=n$,
  \begin{equation}
    \label{eq:volume_sum_K}
    \sum_{\substack{g_1+g_2=g \\ 2g_i+n_i > N_i+1}}
    \frac{V_{g_1,n_1+1} V_{g_2,n_2+1}}{\jap{g_1}^{N_1}\jap{g_2}^{N_2}}
    \leq C_{n,N_1,N_2} \frac{V_{g,n}}{\jap{g}^{N_1+N_2+1}} \cdot
  \end{equation}
 \end{lem}

 We draw the reader's attention to the fact that the sum is only taken over the set of indices $(g_i, n_i)$ such that
 $2g_i+n_i> N_i+1$. As we will see in the following proof, this is necessary and the result is false if we add a term
 with $1<2g_i+n_i \leq N_i+1$.
 
\begin{proof}
  The proof is an induction on the integer $N_1+N_2$, the case $N_1=N_2=0$ corresponding to \cref{e:volume_sum}. 

  Let $N_1, N_2 \geq 0$ such that $N := N_1+N_2 >0$. We assume the property at the rank $N-1$. By symmetry, we
  can assume that $N_1 \geq N_2$, and in particular $N_1>0$. Then, for any $n_1$, $n_2$ such that
  $n_1+n_2=n$, the left hand side of \cref{eq:volume_sum_K} restricted to the terms where $g_1>0$
  (which only exist if $g>0$) satisfies
  \begin{align*}
    \sum_{\substack{g_1+g_2=g \\ 2g_i+n_i > N_i+1 \\g_1>0}}
    \frac{V_{g_1,n_1+1} V_{g_2,n_2+1}}{\jap{g_1}^{N_1}\jap{g_2}^{N_2}}
    = \mathcal{O}_{n_1} \Bigg( \sum_{\substack{g_1' + g_2 = g-1 \\ 2g_1' + n_1' > N_1 \\ 2g_2+n_2 > N_2+1}}
    \frac{V_{g_1',n_1'+1} V_{g_2,n_2+1}}{\jap{g_1}^{N_1-1}\jap{g_2}^{N_2}} \Bigg)
  \end{align*}
  since $V_{g_1,n_1+1} /\jap{g_1} = \mathcal{O}_{n_1}(V_{g_1-1,n_1+2})$ by
  \cref{e:volume_same_euler,e:volume_n_plus_one}, and thanks to the change of indices $g_1'=g_1-1$,
  $n_1'=n_1+1$.  By the induction hypothesis, this sum is
  \begin{equation*}
    \mathcal{O}_{n+1,N_1-1,N_2}\paren*{\frac{V_{g-1,n+1}}{\jap{g-1}^{N}}}
    = \mathcal{O}_{n,N_1,N_2} \paren*{\frac{V_{g,n}}{\jap{g}^{N+1}}}
  \end{equation*}
  by \cref{e:volume_same_euler,e:volume_n_plus_one} again.

  As a consequence, we are left to bound the term for which $g_1=0$. If such a term is present in the sum,
  then the integer $n_1 = 2g_1+n_1$ satisfies $n_1 > N_1+1$, and hence $n - n_2 -1 \geq N_1+1$. Then, the term
  of the sum is
  \begin{equation*}
    \frac{V_{0,n_1+1} V_{g,n_2+1}}{\jap{0}^{N_1} \jap{g}^{N_2}}
    = \mathcal{O}_{n} \paren*{\frac{V_{g,n}}{\jap{g}^{N_2+n-n_2-1}}}
    = \mathcal{O}_{n} \paren*{\frac{V_{g,n}}{\jap{g}^{N+1}}} 
  \end{equation*}
  by \cref{e:volume_n_plus_one} applied $n-n_2-1$ times.
\end{proof}

\subsection{The leading term of the asymptotic expansion}

Let us conclude this preliminary section by a detailed proof of the following first-order estimate. This will allow us
to present a few ideas that will be used in the general case.

\begin{prp}[{\cite[Proposition 3.1]{mirzakhani2019}}]
  \label{prop:main_term_vgn}
  For any $n \geq 1$, $g \geq 0$ such that $2g-2+n>0$,  and any length vector $\x \in \R_{\geq 0}^n$,
  \begin{equation*}
    \frac{V_{g,n}(\x)}{V_{g,n}} = \prod_{j=1}^n \sinhc \div{x_j}
    +\O[n]{\frac{\abso{\x}}{\jap{g}} \exp \div{x_1+ \ldots + x_n}}.
  \end{equation*}
\end{prp}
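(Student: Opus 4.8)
The plan is to expand both sides as even power series in the $x_j$ and compare them coefficient by coefficient. Writing out the Taylor series of $\sinh$, one has the identity
\[
  \sinhc \div{x} = \sum_{k \geq 0} \frac{x^{2k}}{2^{2k}(2k+1)!},
\]
so that $\prod_{j=1}^n \sinhc \div{x_j} = \sum_{\alpha \in \N_0^n} \prod_{j=1}^n \frac{x_j^{2\alpha_j}}{2^{2\alpha_j}(2\alpha_j+1)!}$ has exactly the same monomial structure as the expression \eqref{theo:mirz_coeff} for $V_{g,n}(\x)/V_{g,n}$, but with every coefficient equal to $1$ instead of $c_{g,n}(\alpha)/V_{g,n}$ (recall the convention $c_{g,n}(\alpha) = 0$ for $\abso{\alpha} > 3g-3+n$). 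Since $0 \leq c_{g,n}(\alpha) \leq V_{g,n}$ by \cref{cor:c_d_decrease}, this gives
\[
  \prod_{j=1}^n \sinhc \div{x_j} - \frac{V_{g,n}(\x)}{V_{g,n}}
  = \sum_{\alpha \in \N_0^n} \paren*{1 - \frac{c_{g,n}(\alpha)}{V_{g,n}}}
    \prod_{j=1}^n \frac{x_j^{2\alpha_j}}{2^{2\alpha_j}(2\alpha_j+1)!},
\]
a sum of non-negative terms. Everything then reduces to the key estimate that for every $\alpha$,
\[
  0 \leq 1 - \frac{c_{g,n}(\alpha)}{V_{g,n}} \leq C_n \, \frac{\abso{\alpha}^2}{\jap{g}}.
\]

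To prove this estimate I would first bound a single discrete derivative $\delta_i c_{g,n}(\beta)$ uniformly in $\beta$. By symmetry take $i=1$ and apply Mirzakhani's recursion \eqref{e:recurrence_decomp} to $\delta_1 c_{g,n}(\beta) = \delta_1 \mathcal{A} + \delta_1 \mathcal{B} + \delta_1 \mathcal{C}$; each of the three contributions is $\geq 0$, exactly as in the proof of \cref{cor:c_d_decrease}. For the non-separating term, the same manipulation yields
\[
  \delta_1 \mathcal{B}_{g,n}(\beta)
  = 16 \sum_{k_1, k_2 \geq 0} (u_{k_1+k_2+2-\beta_1} - u_{k_1+k_2+1-\beta_1}) \, c_{g-1,n+1}(k_1,k_2,\beta_2,\ldots,\beta_n),
\]
and the geometric decay $u_m - u_{m-1} = \O{4^{-m}}$ from \cref{lemm:behaviour_u_i} localises the sum near $k_1+k_2 \approx \beta_1$; bounding $c_{g-1,n+1} \leq V_{g-1,n+1}$ and counting the $\O{\jap{\beta_1}}$ relevant lattice points gives $\delta_1 \mathcal{B}_{g,n}(\beta) = \O[n]{\jap{\beta_1} V_{g-1,n+1}}$. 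The terms $\delta_1 \mathcal{A}^{(j)}$ and $\delta_1 \mathcal{C}^{(\iota)}$ are treated the same way (Abel summation against the weights $u_i$ for $\mathcal{A}$, and the cutting bound \eqref{e:volume_sum} to sum the separating contributions over $\iota$ for $\mathcal{C}$), producing the factors $V_{g,n-1}$ and $\sum_\iota V_{g',\abs{I}+1} V_{g-g',\abs{J}+1}$. Since \eqref{e:volume_same_euler}, \eqref{e:volume_n_plus_one} and \eqref{e:volume_sum} all give $V_{g-1,n+1}, V_{g,n-1}, \sum_\iota(\cdots) = \O[n]{V_{g,n}/\jap{g}}$, I obtain the uniform bound $\delta_i c_{g,n}(\beta) = \O[n]{(1+\abso{\beta}) V_{g,n}/\jap{g}}$.

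The key estimate then follows by telescoping from $\z{n}$ to $\alpha$ one unit at a time: along a monotone lattice path $\z{n} = \beta^{(0)} \to \cdots \to \beta^{(\abso{\alpha})} = \alpha$ the increments are single discrete derivatives with $\abso{\beta^{(\ell)}} = \ell$, and summing the bound above over the $\abso{\alpha}$ steps gives $V_{g,n} - c_{g,n}(\alpha) = \O[n]{\abso{\alpha}^2 V_{g,n}/\jap{g}}$ (with the trivial bound $\leq V_{g,n}$ taking over once $\abso{\alpha} \gtrsim \sqrt{g}$). Inserting this into the displayed sum, I would finish with the elementary generating-function estimate
\[
  \sum_{\alpha \in \N_0^n} \abso{\alpha}^2 \prod_{j=1}^n \frac{x_j^{2\alpha_j}}{2^{2\alpha_j}(2\alpha_j+1)!}
  = \O[n]{\abso{\x} \exp \div{x_1+\ldots+x_n}},
\]
which holds because $\sum_k k \, \frac{x^{2k}}{2^{2k}(2k+1)!} = \tfrac12(\cosh \div{x} - \sinhc \div{x})$ and $\sum_k k^2 \, (\cdots)$ are each $\O{x \exp \div{x}}$, while $\sinhc \div{x} \leq \exp \div{x}$. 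Crucially, the factor $1/x$ hidden in $\sinhc \div{x} \sim \exp \div{x}/x$ absorbs one of the two powers of $\abso{\alpha}$, so that the quadratic coefficient bound produces only the linear factor $\abso{\x}$. Combining the last two displays proves the proposition.

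The step I expect to be the main obstacle is the uniform single-derivative bound $\delta_i c_{g,n}(\beta) = \O[n]{(1+\abso{\beta}) V_{g,n}/\jap{g}}$: one must verify that all three families of recursion terms contribute at the same order $V_{g,n}/\jap{g}$, and, more delicately, that the dependence on $\beta$ is genuinely \emph{linear}. Any worse power of $\abso{\beta}$ would, after telescoping, give a coefficient bound of degree $> 2$ in $\abso{\alpha}$, which by the computation above would break the matching with the claimed linear factor $\abso{\x}$ in the remainder.
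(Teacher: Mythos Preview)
Your proposal is correct and follows essentially the same route as the paper: bound $\delta_1 c_{g,n}$ via one application of Mirzakhani's recursion (the paper's \cref{lemm:main_term_calpha}), integrate discretely along a lattice path from $\z{n}$ to $\alpha$ (the paper states this as \cref{lemm:discrete_integral}), and finish with the same generating-function estimate (the paper's \cref{sec:coeff_to_vol_zero}, where the $|\x|$ factor is extracted exactly as you anticipate, by singling out the coordinate achieving $|\alpha|_\infty$). Your handling of $\delta_1\mathcal{B}$ via the single change of variable $i=k_1+k_2+2-\beta_1$ is a slight cosmetic variant of the paper's ``isolate $i=0$ then shift'' manipulation, but the content is identical.
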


This proposition comes as a consequence of the expression for the volume
polynomials in terms of their coefficients $(c_{g,n}(\alpha))_\alpha$,
together with the following first-order estimate for the coefficients.

\begin{lem}
  \label{prop:main_term_calpha}
  For any $n \geq 1$, $g \geq 0$ such that $2g-2+n>0$, and any multi-index $\alpha \in \N_0^n$,
  \begin{equation*}
    c_{g,n}(\alpha) = V_{g,n} + \O[n]{\abso{\alpha}^2 \frac{V_{g,n}}{\jap{g}}}.
  \end{equation*}
\end{lem}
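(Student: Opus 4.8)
The plan is to use the monotonicity of the coefficients (\cref{cor:c_d_decrease}) to reduce the claim to a one-sided estimate on the nonnegative quantity $V_{g,n} - c_{g,n}(\alpha) = c_{g,n}(\z n) - c_{g,n}(\alpha)$, and to control this difference by telescoping single discrete derivatives $\delta_i c_{g,n}$. First I would record the two auxiliary volume comparisons that make the child surfaces appearing in Mirzakhani's recursion negligible: combining the same-Euler-characteristic estimate \eqref{e:volume_same_euler} with the add-a-cusp estimate \eqref{e:volume_n_plus_one} gives $V_{g,n-1} = \O[n]{V_{g,n}/\jap g}$ and $V_{g-1,n+1} = \O[n]{V_{g,n}/\jap g}$, since both surfaces have Euler characteristic one smaller in absolute value than $(g,n)$ and $2g-3+n$ is of order $\jap g$ for large $g$. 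The finitely many degenerate small-$\abso\chi$ cases, in which some child surface does not exist, are absorbed into the $\O[n]{\cdot}$ constant.

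The core estimate I would then establish, using a single application of the recursion \eqref{e:recurrence_decomp}, is
\[
  \delta_1 c_{g,n}(\alpha) = \O[n]{(1+\abso\alpha)\,\frac{V_{g,n}}{\jap g}}.
\]
For the non-separating and separating pair-of-pants terms $\mathcal B_{g,n}$ and $\mathcal C_{g,n}^{(\iota)}$, I would rewrite them as in the proof of \cref{cor:c_d_decrease} so that $\alpha_1$ enters only as a shift in the index of $u$; applying $\delta_1$ then produces the weight $u_{k_1+k_2+2-\alpha_1} - u_{k_1+k_2+1-\alpha_1}$, which by \cref{lemm:behaviour_u_i} consists of a boundary contribution of size $\tfrac12$ on the diagonal $k_1+k_2 = \alpha_1-2$ together with a geometrically decaying tail. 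Bounding the child coefficients by their total volumes and counting the $m+1$ pairs with $k_1+k_2 = m$ yields a factor $\O{1+\alpha_1}$ times $V_{g-1,n+1}$ for $\mathcal B_{g,n}$, and times $V_{g',\abso I+1}V_{g-g',\abso J+1}$ for each $\mathcal C_{g,n}^{(\iota)}$; summing the latter over $\iota \in \mathcal I_{g,n}$ and invoking the cutting estimate \eqref{e:volume_sum} keeps this contribution of size $\O[n]{(1+\alpha_1)V_{g,n}/\jap g}$.

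The delicate term is $\mathcal A_{g,n}^{(j)}$, because its sum over $i$ has of order $g$ nonzero summands, so a naive termwise bound would be too lossy by a full factor $\jap g$. Here I would instead use summation by parts: writing $a_i := c_{g,n-1}(i+\alpha_1+\alpha_j-1,\alpha_2,\ldots,\hat\alpha_j,\ldots,\alpha_n)$, the increment $\delta_1$ turns the relevant sum into $\sum_{i\geq 0} u_i\,(a_i - a_{i+1})$, which telescopes to $u_0 a_0 + \sum_{i\geq 1}(u_i - u_{i-1})a_i$ (the term at infinity vanishing because coefficients are eventually zero). Using $u_0 = \tfrac12$, the decay $u_i - u_{i-1} = \O{4^{-i}}$ from \cref{lemm:behaviour_u_i}, and $a_i \leq V_{g,n-1}$, this collapses to $\O{V_{g,n-1}}$, so the prefactor $8(2\alpha_j+1)$ gives an $\mathcal A_{g,n}^{(j)}$-contribution of $\O{(1+\alpha_j)V_{g,n-1}} = \O[n]{(1+\alpha_j)V_{g,n}/\jap g}$; summing over $j$ produces $\O[n]{(1+\abso\alpha)V_{g,n}/\jap g}$, and combining the three contributions proves the core estimate.

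Finally, I would telescope from $\z n$ to $\alpha$ along a path incrementing one coordinate at a time, using the symmetry of $c_{g,n}$ to reduce each step to the first-coordinate estimate above. Each of the $\abso\alpha$ steps contributes $\O[n]{(1+s)V_{g,n}/\jap g}$, where $s$ is the $\ell^1$-norm of the current multi-index, and $\sum_{s=0}^{\abso\alpha-1}(1+s) = \O{\abso\alpha^2}$. Since every summand is nonnegative by \cref{cor:c_d_decrease}, this gives $0 \leq V_{g,n} - c_{g,n}(\alpha) = \O[n]{\abso\alpha^2 V_{g,n}/\jap g}$, which is the claim. The main obstacle is precisely the $\mathcal A$-term: without the summation-by-parts cancellation one loses a power of $\jap g$, so the proof hinges on pairing the telescoping of the coefficients against the summability of the increments $u_i - u_{i-1}$.
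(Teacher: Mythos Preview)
Your proposal is correct and follows essentially the same route as the paper: first establish $\delta_1 c_{g,n}(\alpha) = \O[n]{\jap{\alpha}\,V_{g,n}/\jap{g}}$ by applying Mirzakhani's recursion and exploiting the summability of the increments $u_{i+1}-u_i$ (your ``summation by parts'' for the $\mathcal{A}$-term is exactly the change of index the paper performs, and your shift-in-$u$ rewriting for $\mathcal{B}$ and $\mathcal{C}$ reproduces the paper's computation), and then telescope from $\z{n}$ to $\alpha$, which is the content of the paper's discrete integration lemma. The only cosmetic difference is your appeal to monotonicity to cast the conclusion as a one-sided estimate; this is harmless but unnecessary, since the derivative bound is already two-sided.
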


\begin{rem}
  We insist on the fact that this estimate is true \emph{for any $\alpha$} and not only for
  multi-indices $\alpha$ such that $\abso{\alpha} \leq 3g-3+n$. Indeed, if $\abso{\alpha} > 3g-3+n$,
  then the bound is trivial, because $c_{g,n}(\alpha)=0$ and $\frac{\abso{\alpha}^2}{\jap{g}} \gg 1$.
\end{rem}

We first prove \cref{prop:main_term_calpha} in \cref{sec:discrete_der_zero,sec:discrete_int_zero}, and then
deduce \cref{prop:main_term_vgn} from it in \cref{sec:coeff_to_vol_zero}.

\subsubsection{First-order estimate of the discrete derivative}
\label{sec:discrete_der_zero}

\Cref{prop:main_term_calpha} states that the coefficients $c_{g,n}(\alpha)$ are almost constant, equal to the value
$V_{g,n} = c_{g,n}(\z{n})$. We will prove this by estimating the \emph{discrete derivatives} of the coefficients $c_{g,n}(\alpha)$.

\begin{lem}
  \label{lemm:main_term_calpha}
  For any integers $g \geq 0$ and $n \geq 1$ satisfying $2g-2+n>0$, and any multi-index $\alpha \in \N_0^n$,
  \begin{equation*}
    \delta_1 c_{g,n}(\alpha) = \O[n]{\jap{\alpha} \frac{V_{g,n}}{\jap{g}}}.
  \end{equation*}
\end{lem}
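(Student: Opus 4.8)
The plan is to apply Mirzakhani's recursion (\cref{theo:rec_mirz}) a single time and to estimate the discrete derivative $\delta_1$ of each of the three contributions separately, exploiting that the whole recursion is organised around the first boundary component. The crucial structural observation is that $\delta_1$ acts only on the variable $\alpha_1$, and that in each of $\mathcal{A}_{g,n}^{(j)}$, $\mathcal{B}_{g,n}$ and $\mathcal{C}_{g,n}^{(\iota)}$ the dependence on $\alpha_1$ enters solely through a shift of the summation index $i$. Writing each contribution schematically as $\sum_{i \ge 0} u_i\, \Phi(i+\alpha_1+c)$ for a suitable nonnegative $\Phi$ built from lower-complexity coefficients and a constant $c$, applying $\delta_1$ produces the telescoping expression $\sum_{i\ge 0} u_i\bigl(\Phi(i+\alpha_1+c)-\Phi(i+\alpha_1+c+1)\bigr)$. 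Since only finitely many terms are nonzero (coefficients vanish for large index by \cref{cor:c_d_decrease}), a summation by parts rewrites this as $u_0\,\Phi(\alpha_1+c)+\sum_{i\ge 1}(u_i-u_{i-1})\,\Phi(i+\alpha_1+c)$, and the smallness comes from the fact that the weights $u_i$ are replaced by the increments $u_i-u_{i-1}$, which decay like $4^{-i}$ by \cref{lemm:behaviour_u_i}.

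For the terms $\mathcal{A}_{g,n}^{(j)}$ with $j\ge 2$, one has $\Phi(m)=c_{g,n-1}(m,\alpha_2,\ldots,\hat\alpha_j,\ldots,\alpha_n)$, bounded uniformly by $V_{g,n-1}$ (\cref{cor:c_d_decrease}), so the summation-by-parts expression is at most $V_{g,n-1}\bigl(u_0+\sum_{i\ge 1}(u_i-u_{i-1})\bigr)=V_{g,n-1}$, the increments telescoping to $\lim_i u_i=1$, while the prefactor $8(2\alpha_j+1)$ is $\O[n]{\jap{\alpha}}$. As \cref{e:volume_n_plus_one} gives $V_{g,n-1}=\O[n]{V_{g,n}/\jap{g}}$, each such term is $\O[n]{\jap{\alpha}\,V_{g,n}/\jap{g}}$, and there are only $n-1$ of them.

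The heart of the matter is $\mathcal{B}_{g,n}$ and the separating terms $\mathcal{C}_{g,n}^{(\iota)}$, where $\Phi$ is a sum over an antidiagonal, namely $\Phi(s)=\sum_{k_1+k_2=s}c_{g-1,n+1}(k_1,k_2,\alpha_2,\ldots,\alpha_n)$ for $\mathcal{B}$, and $\Phi(s)=\sum_{k_1+k_2=s}c_{g',|I|+1}(k_1,\alpha_I)\,c_{g-g',|J|+1}(k_2,\alpha_J)$ for $\mathcal{C}$. Here one cannot bound $\Phi$ by a constant — indeed the un-differentiated contributions are genuinely of order $V_{g,n}$, and summing the uniform bound over all $s \sim 3g$ gains nothing — but $\Phi(s)$ is at most $(s+1)$ times the relevant volume, respectively volume product, again by \cref{cor:c_d_decrease}. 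Feeding this into the summation-by-parts expression, the factor $u_0$ together with the geometric decay of $u_i-u_{i-1}$ tames the linear growth: both $u_0(\alpha_1-1)$ and $\sum_{i\ge 1}(u_i-u_{i-1})(i+\alpha_1-1)$ are $\O{\jap{\alpha_1}}$. Thus $\delta_1\mathcal{B}_{g,n}(\alpha)=\O[n]{\jap{\alpha}\,V_{g-1,n+1}}$, and combining \cref{e:volume_same_euler,e:volume_n_plus_one} to get $V_{g-1,n+1}=\O[n]{V_{g,n}/\jap{g}}$ yields the desired bound. For the separating terms the same argument gives $\O[n]{\jap{\alpha}}\sum_{g'}V_{g',|I|+1}V_{g-g',|J|+1}$ for each partition; summing over the $2^{n-1}$ partitions of $\{2,\ldots,n\}$ and invoking \cref{e:volume_sum} (with the two boundary counts summing to $n-1$) bounds this by $\O[n]{\jap{\alpha}\,V_{g,n-1}/\jap{g}}=\O[n]{\jap{\alpha}\,V_{g,n}/\jap{g}}$, the last step again using \cref{e:volume_n_plus_one}. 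Adding the three contributions gives the lemma; the finitely many cases with $\abso{\chi}=1$, to which the recursion does not apply, are handled trivially by enlarging the implied constant. The main obstacle is precisely this treatment of $\mathcal{B}$ and $\mathcal{C}$: bounding $u_i$ crudely by its limit $1$ is hopeless, and extracting the $1/\jap{g}$ gain requires both the summation by parts that exposes the rapidly decaying increments $u_i-u_{i-1}$ and the correct volume-comparison estimates.
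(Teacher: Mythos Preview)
Your proof is correct and follows essentially the same approach as the paper: apply Mirzakhani's recursion once, perform a summation by parts (equivalently, a change of index in $i$) on each of the three contributions so that the weights $u_i$ are replaced by the rapidly decaying increments $u_i-u_{i-1}$, bound the resulting sums using $c_{g',n'}(\beta)\le V_{g',n'}$ from \cref{cor:c_d_decrease}, and conclude via the volume-ratio estimates \cref{e:volume_n_plus_one,e:volume_same_euler,e:volume_sum}. The only difference is presentational: the paper writes out each term explicitly while you abstract them into a common shape $\sum_i u_i\,\Phi(i+\alpha_1+c)$, and for the separating term you record the slightly weaker (but sufficient) bound $\O[n]{\jap{\alpha}V_{g,n}/\jap{g}}$ rather than the $\O[n]{\jap{\alpha}V_{g,n}/\jap{g}^2}$ that \cref{e:volume_n_plus_one} actually yields.
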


Note that, by symmetry of the volume coefficients, this result is also true if we replace $\delta_1$ by $\delta_i$ for
any $i \in \{1, \ldots, n\}$. 

\begin{proof}
  The result is trivially true when $\abso{\chi} = 2g-2+n =1$, so we can assume that it is not the case and apply
  Mirzakhani's topological recursion formula, \cref{theo:rec_mirz}:
  \begin{equation*}
    \delta_1c_{g,n}(\alpha)
    = \sum_{j=2}^n \delta_1\mathcal{A}_{g,n}^{(j)}(\alpha)
    + \delta_1\mathcal{B}_{g,n}(\alpha)
    + \sum_{\iota \in \mathcal{I}_{g,n}} \delta_1\mathcal{C}_{g,n}^{(\iota)}(\alpha).
  \end{equation*}
  We prove that each of these three terms is $\O[n]{\jap{\alpha} V_{g,n}/\jap{g}}$ separately thanks to
  their respective expressions, \cref{e:rec_other_bound,e:rec_non_sep,e:rec_sep}.

  Let us begin by the first sum. For a $j \geq 2$, we write \cref{e:rec_other_bound} for
  $\mathcal{A}_{g,n}^{(j)}(\alpha)$ and
  $\mathcal{A}_{g,n}^{(j)}(\alpha_1+1, \alpha_2, \ldots, \alpha_n)$, isolating the term $i=0$ in the
  first sum and using a change of index on the sum over $i \geq 1$. We obtain
  \begin{align*}
    \delta_1\mathcal{A}_{g,n}^{(j)}(\alpha)
    =& \; 4 \; (2\alpha_j+1) \;
       c_{g,n-1}(\alpha_1+\alpha_j-1,\alpha_2, \ldots, \hat{\alpha}_j, \ldots, \alpha_n) \\
     & + 8 \; (2\alpha_j+1) \sum_{i=0}^{+\infty} (u_{i+1} - u_{i}) \;
       c_{g,n-1}(i+\alpha_1+\alpha_j,\alpha_2, \ldots, \hat{\alpha}_j, \ldots, \alpha_n).
  \end{align*}
  But we know by \cref{cor:c_d_decrease} that for any multi-index $\beta \in \N_0^{n-1}$,
  \begin{equation*}
    0 \leq c_{g,n-1}(\beta) \leq V_{g, n-1}.
  \end{equation*}
  Then,
  \begin{equation*}
    0 \leq \delta_1\mathcal{A}_{g,n}^{(j)}(\alpha)
    \leq 8(2 \alpha_j+1) V_{g,n-1}
    = \O[n]{\jap{\alpha_j} \frac{V_{g,n}}{\jap{g}}}
  \end{equation*}
  because $\sum_{i=0}^{+ \infty} (u_{i+1} - u_i) = \lim u - u_0 = 1 - \frac 12 = \frac 12$ by
  \cref{lemm:behaviour_u_i}, and thanks to \cref{e:volume_n_plus_one}. Since there are $n-1 = \O[n]{1}$
  possible values for $j$,
  \begin{equation}
    \label{e:estimate_Aj_one}
    \sum_{j=2}^n \delta_1 \mathcal{A}_{g,n}^{(j)}(\alpha) = \O[n]{\jap{\alpha}\frac{V_{g,n}}{\jap{g}}}.
  \end{equation}

  We now look at the non-separating term $\delta_1\mathcal{B}_{g,n}(\alpha)$. Note that this term
  only appears whenever $g \geq 1$. By the same method, this time applied to \cref{e:rec_non_sep},
  \begin{align*}
    \delta_1\mathcal{B}_{g,n}(\alpha)
    = & \, 8 \sum_{k_1+k_2=\alpha_1-2} c_{g-1,n+1}(k_1,k_2,\alpha_2, \ldots, \alpha_n) \\
    & + 16 \sum_{i=0}^{+\infty} \sum_{k_1+k_2=i+\alpha_1-1}
    (u_{i+1} - u_i) \; c_{g-1,n+1}(k_1,k_2,\alpha_2, \ldots, \alpha_n)\cdot
  \end{align*}
  By \cref{cor:c_d_decrease}, for any multi-index $\beta \in \N_0^{n+1}$,
  \begin{equation*}
    0 \leq c_{g-1,n+1}(\beta) \leq V_{g-1, n+1} = \O[n]{\frac{V_{g,n}}{\jap{g}}}
  \end{equation*}
  thanks to \cref{e:volume_n_plus_one,e:volume_same_euler}. Then, 
  \begin{equation}
    \label{e:estimate_B_one}
    \delta_1\mathcal{B}_{g,n}(\alpha)
    = \O[n]{\alpha_1 \frac{V_{g,n}}{\jap{g}} +  \sum_{i=0}^{+\infty} (i+\alpha_1)
      (u_{i+1} - u_i) \frac{V_{g,n}}{\jap{g}}}
    = \O[n]{\jap{\alpha_1}\frac{V_{g,n}}{\jap{g}}}
  \end{equation}
  because the series $\sum_i (u_{i+1} - u_i)$ and $\sum_i i(u_{i+1} - u_i)$ converge.

  Finally, for any configuration $\iota = (g', I, J) \in \mathcal{I}_{g,n}$,
  \begin{align*}
    \delta_1 \mathcal{C}_{g,n}^{(\iota)}(\alpha) 
    = & \, 8 \sum_{k_1+k_2=\alpha_1-2}
        c_{g',|I|+1}(k_1,\alpha_{I}) \; c_{g-g',|J|+1}(k_2, \alpha_{J}) \\
      & + 16 \sum_{i=0}^{+\infty} \sum_{k_1+k_2=i+\alpha_1-1}
        (u_{i+1} - u_{i}) \;  c_{g',|I|+1}(k_1,\alpha_{I}) \; c_{g-g',|J|+1}(k_2, \alpha_{J}) \\
    = & \, \O[n]{\jap{\alpha_1} V_{g',|I|+1} V_{g-g',|J|+1}}. 
  \end{align*}
  As a consequence, 
  \begin{equation*}
    \sum_{\iota \in \mathcal{I}_{g,n}} \delta_1 \mathcal{C}_{g,n}^{(\iota)} (\alpha)= 
    \mathcal{O}_n \Bigg( \jap{\alpha_1} \sum_{\substack{g_1+g_2=g \\ n_1+n_2 = n-1 \\ 2g_i+n_i>1}} V_{g_1,n_1+1}
    V_{g_2,n_2+1} \Bigg)
  \end{equation*}
  and therefore, by \cref{e:volume_sum,e:volume_n_plus_one},
  \begin{equation}
    \label{e:estimate_C_one}
    \sum_{\iota \in \mathcal{I}_{g,n}} \delta_1 \mathcal{C}_{g,n}^{(\iota)}(\alpha)
   = \O[n]{\jap{\alpha_1} \frac{V_{g,n-1}}{\jap{g}}}
    = \O[n]{\jap{\alpha_1} \frac{V_{g,n}}{\jap{g}^2}}.
  \end{equation}
  The conclusion follows from adding \cref{e:estimate_Aj_one,e:estimate_B_one,e:estimate_C_one}.
\end{proof}

\subsubsection{A discrete integration formula}
\label{sec:discrete_int_zero}

In order to go from an estimate of discrete derivatives to an estimate on actual
coefficients, we use the following discrete integration lemma.

\begin{lem}
  \label{lemm:discrete_integral}
  Let $n \geq 1$ be an integer. 
  For any $v : \N_0^n \rightarrow \R$,
  \begin{equation*}
    v(\alpha)
    = v(\z{n}) - \sum_{i=1}^n \sum_{k=0}^{\alpha_i-1} \delta_i v(\z{i-1}, k, \alpha_{i+1}, \ldots, \alpha_n).
  \end{equation*}
\end{lem}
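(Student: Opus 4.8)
The plan is to prove this identity by a two-stage telescoping argument, which is the discrete analogue of integrating $v$ along the axis-parallel path running from $\alpha$ down to the origin while zeroing out one coordinate at a time. For $0 \leq i \leq n$ I would introduce the intermediate points $w_i := (\z{i}, \alpha_{i+1}, \ldots, \alpha_n)$, so that $w_0 = \alpha$ and $w_n = \z{n}$, and note that two consecutive points $w_{i-1}$ and $w_i$ differ only in their $i$-th coordinate, which equals $\alpha_i$ for $w_{i-1}$ and $0$ for $w_i$. A plain telescoping across the $n$ coordinates then gives
\[
  v(\alpha) - v(\z{n}) = v(w_0) - v(w_n) = \sum_{i=1}^n \left( v(w_{i-1}) - v(w_i) \right).
\]

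The second stage is to compute each increment $v(w_{i-1}) - v(w_i)$ by telescoping \emph{within} the $i$-th coordinate, using the definition of the forward difference $\delta_i$. Since $w_{i-1}$ and $w_i$ agree except in position $i$, where they carry $\alpha_i$ and $0$ respectively, unfolding the definition of $\delta_i$ and collapsing the sum yields
\[
  \sum_{k=0}^{\alpha_i-1} \delta_i v(\z{i-1}, k, \alpha_{i+1}, \ldots, \alpha_n) = v(w_i) - v(w_{i-1}),
\]
because each term $v(\ldots, k, \ldots)$ cancels against the $-v(\ldots, k+1, \ldots)$ of the adjacent summand, leaving only the endpoints $v(\z{i-1}, 0, \alpha_{i+1}, \ldots) = v(w_i)$ and $-v(\z{i-1}, \alpha_i, \alpha_{i+1}, \ldots) = -v(w_{i-1})$. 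Substituting this into the cross-coordinate sum above produces
\[
  v(\alpha) - v(\z{n}) = -\sum_{i=1}^n \sum_{k=0}^{\alpha_i-1} \delta_i v(\z{i-1}, k, \alpha_{i+1}, \ldots, \alpha_n),
\]
which rearranges immediately into the claimed formula.

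I do not expect any genuine obstacle here: the argument is purely combinatorial and requires no hypothesis on $v$. The only points demanding care are bookkeeping ones. First, one must check that the order in which coordinates are zeroed along the path $w_0, w_1, \ldots, w_n$ matches exactly the index pattern $(\z{i-1}, k, \alpha_{i+1}, \ldots, \alpha_n)$ appearing in the statement. Second, the sign must be tracked carefully: because $\delta_i$ is a \emph{forward} difference, the inner telescoping recovers the value at $0$ minus the value at $\alpha_i$, which is what produces the overall minus sign in front of the double sum. Finally, the degenerate cases $\alpha_i = 0$ should be verified to be consistent, since there the inner sum is empty and $w_{i-1} = w_i$, so the corresponding term contributes nothing on both sides.
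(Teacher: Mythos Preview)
Your proposal is correct and follows essentially the same approach as the paper: both arguments compute the inner sum $\sum_{k=0}^{\alpha_i-1}\delta_i v(\z{i-1},k,\alpha_{i+1},\ldots,\alpha_n)$ as a telescoping sum equal to $v(\z{i},\alpha_{i+1},\ldots,\alpha_n)-v(\z{i-1},\alpha_i,\alpha_{i+1},\ldots,\alpha_n)$, and then telescope the outer sum across $i$. Your explicit notation $w_i$ for the intermediate points is the only cosmetic difference.
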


\Cref{prop:main_term_calpha} then directly follows from this formula and our first-order estimate on the discrete
derivatives, \cref{lemm:main_term_calpha}.

\begin{proof}[Proof of \cref{lemm:discrete_integral}]
  We observe that for any index $i$, the sum over $k$ is a telescopic sum:
  \begin{align*}
    S_i & := \sum_{k=0}^{\alpha_i-1} \delta_i v(\z{i-1}, k, \alpha_{i+1}, \ldots, \alpha_n)\\
    & = \sum_{k=0}^{\alpha_i-1}
    \brac*{v(\z{i-1}, k, \alpha_{i+1}, \ldots, \alpha_n)
      -  v(\z{i-1}, k+1, \alpha_{i+1}, \ldots, \alpha_n)}\\
    & =  v(\z{i}, \alpha_{i+1}, \ldots, \alpha_n)
      -  v(\z{i-1}, \alpha_i, \alpha_{i+1}, \ldots, \alpha_n).
  \end{align*}
  As a consequence, $\sum_{i=1}^n S_i = v(\z{n}) - v(\alpha)$, which what was claimed.
\end{proof}

\subsubsection{From the coefficient estimate to the volume estimate}
\label{sec:coeff_to_vol_zero}

Let us finally prove that \cref{prop:main_term_calpha} implies \cref{prop:main_term_vgn}.

\begin{proof}
  Using the expression of $\sinhc$ as a power series, we can write
  \begin{equation*}
    V_{g,n}(\x)
    - V_{g,n} \prod_{j=1}^n \sinhc \div{x_j}
    = \sum_{\substack{\alpha \in \N_0^n \\ \alpha \neq \z{n}}} (c_{g,n}(\alpha) - V_{g,n}) \prod_{j=1}^n \frac{x_j^{2\alpha_j}}{2^{2 \alpha_j}
      (2 \alpha_j + 1)!} \cdot
  \end{equation*}
  As a consequence, by the triangle inequality and \cref{prop:main_term_calpha},
  \begin{equation}
        \label{eq:estimate_vgn_main_term}
    \abso*{V_{g,n}(\x)
      - V_{g,n} \prod_{j=1}^n \sinhc \div{x_j}}
    \leq C_n \frac{V_{g,n}}{\jap{g}} \sum_{\substack{\alpha \in \N_0^n \\ \alpha \neq \z{n}}}
    \abso{\alpha}_\infty^2 \prod_{j=1}^n \frac{x_j^{2\alpha_j}}{2^{2 \alpha_j}
      (2 \alpha_j + 1)!} \cdot
  \end{equation}
  We cut the sum over $\alpha$ in \cref{eq:estimate_vgn_main_term} depending on the index $j$ for
  which $\abso{\alpha}_\infty = \alpha_j$.  Since $\alpha_j^2 \leq (2\alpha_j+1)(2\alpha_j)/4$,
  \begin{equation*}
    \sum_{\alpha_j = 1}^{+\infty} \frac{\alpha_j^2 \, x_j^{2\alpha_j}}{2^{2 \alpha_j} (2 \alpha_j + 1)!}
    \leq \sum_{k = 0}^{+\infty} \frac{x_j^{2k+2}}{2^{2 k} (2k + 1)!}
    = 2 x_j \sinh \div{x_j} \leq \abso{\x}  \exp \div{x_j}.
  \end{equation*}
  Also, for any $i$, 
  \begin{equation*}
    \sum_{\alpha_i = 0}^{+\infty} \frac{x_i^{2\alpha_i}}{2^{2 \alpha_i} (2 \alpha_i + 1)!}
    \leq \sum_{k = 0}^{+\infty} \frac{x_i^{k}}{2^{k} k!}
    \leq \exp \div{x_i}.
  \end{equation*}
  This allows us to conclude.
\end{proof}

\section{An explicit second-order expansion}
\label{s:second_order}

We now possess all the tools that are required to compute the second term of the asymptotic expansion of
$V_{g,n}(\x)$. We recall the notation $\cd{x} = \cosh \div{x}$ and $\sd{x} = \sinhc \div{x}$.  Let us prove the
following statement.
\begin{thm}
  \label{prop:vgn_order_two}
   For any integers $g \geq 0$ and $n \geq 1$
  such that $2g-2+n>0$, and any $\x \in \R_{\geq 0}^n$,
  \begin{equation*}
    \frac{V_{g,n}(\x)}{V_{g,n}}
    = F_{g,n}^{(1)}(\x) 
    + \O[n]{\frac{\jap{\x}^3}{\jap{g}^2} \; \exp \div{x_1+ \ldots + x_n}}
  \end{equation*}
  where the functions $F_{g,n}^{(1)}$ is the function defined by:
  \begin{align*}
     F_{g,n}^{(1)}(\x) 
     = & \prod_{1\leq k \leq n} \sd{x_k} \\ 
    & + 8 \; \frac{V_{g-1,n+1} \1{g \geq 1}}{V_{g,n}}
      \sum_{i=1}^n \brac*{\cd{x_i} +1
      - \paren*{\frac{x_i^2}{16} + 2} \sd{x_i}}
      \prod_{k\neq i} \sd{x_k}  \\
    & - 4 \; \frac{V_{g,n-1}}{V_{g,n}} \sum_{1 \leq i < j \leq n}
      \brac*{\cd{x_i} \, \cd{x_j} +1 - 2 \, \sd{x_i} \, \sd{x_j}}
      \prod_{k \notin \{ i, j\}} \sd{x_k}.
  \end{align*}
\end{thm}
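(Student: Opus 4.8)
The plan is to follow the same three movements as the first-order proof in \cref{sec:discrete_der_zero,sec:discrete_int_zero,sec:coeff_to_vol_zero}, but to \emph{retain} the full $1/\jap{g}$ contribution to the volume coefficients instead of absorbing it into the error. Concretely, I would first establish a second-order refinement of \cref{prop:main_term_calpha}, of the form $c_{g,n}(\alpha)/V_{g,n} = 1 + e_{g,n}(\alpha) + \O[n]{\jap{\alpha}^{M}/\jap{g}^{2}}$ for some fixed exponent $M$, where $e_{g,n}$ is an \emph{explicit} function, symmetric in $\alpha$, of size $\O[n]{1/\jap{g}}$ and polynomial in $\alpha$, whose only genus dependence enters through the ratios $V_{g-1,n+1}/V_{g,n}$ and $V_{g,n-1}/V_{g,n}$. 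Feeding this into the generating series $\sum_{\alpha} \frac{c_{g,n}(\alpha)}{V_{g,n}} \prod_{j} \frac{x_j^{2\alpha_j}}{2^{2\alpha_j}(2\alpha_j+1)!}$, which equals $V_{g,n}(\x)/V_{g,n}$ by \cref{theo:mirz_coeff}, and summing termwise, the constant part reproduces $\prod_{k} \sd{x_k}$, the function $e_{g,n}$ produces $F_{g,n}^{(1)}(\x) - \prod_k \sd{x_k}$, and the remainder is controlled by $\O[n]{\frac{\jap{\x}^3}{\jap{g}^2}\exp\div{x_1+\ldots+x_n}}$ exactly as in the passage carried out in \cref{sec:coeff_to_vol_zero}.

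The explicit function $e_{g,n}$ is best read off from the discrete derivatives rather than from $c_{g,n}(\alpha)$ directly. The mechanism that makes this tractable — already the engine of \cref{lemm:main_term_calpha} — is that applying $\delta_1$ to \cref{e:rec_non_sep} and \cref{e:rec_other_bound} replaces the slowly converging weights $u_i$ by the differences $u_{i+1}-u_i$, which are $\O{4^{-i}}$ by \cref{lemm:behaviour_u_i}, together with one isolated boundary sum. Every series appearing in $\delta_1\mathcal{B}_{g,n}(\alpha)$ and $\delta_1\mathcal{A}_{g,n}^{(j)}(\alpha)$ is thus summable, and I would split each inner coefficient as $c_{g',n'}(\beta) = V_{g',n'} + \paren*{c_{g',n'}(\beta)-V_{g',n'}}$: by \cref{prop:main_term_calpha}, which holds for \emph{every} $\beta$, the second piece carries an extra factor $\O[n]{\abso{\beta}^2/\jap{g}}$ and so feeds only the $\jap{g}^{-2}$ remainder, while the first, constant piece reduces the computation to counting the lattice points $\acc*{k_1+k_2=m}$ weighted by $u_{i+1}-u_i$. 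This yields a closed form for the leading part of $\delta_1 c_{g,n}(\alpha)$, proportional to $V_{g-1,n+1}$ and to $V_{g,n-1}$; the separating contribution $\delta_1\sum_{\iota}\mathcal{C}_{g,n}^{(\iota)}(\alpha)$ is pushed into the remainder by \cref{e:volume_sum}, just as in \cref{e:estimate_C_one}. Recovering $\delta_i$ for $i\geq 2$ from $\delta_1$ by symmetry of the coefficients and substituting into the discrete integration formula \cref{lemm:discrete_integral} then reconstructs $e_{g,n}$, the sum over $i$ and the sum over pairs $\acc*{i,j}$ in \cref{eq:1} arising respectively from the non-separating term $\mathcal{B}$ and the boundary-merging terms $\mathcal{A}^{(j)}$.

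I expect the delicate part to be the explicit bookkeeping rather than any single estimate. Two features demand care. First, the \emph{boundary effects}: the count of a sum such as $\sum_{k_1+k_2=\alpha_1-2}$ degenerates when $\alpha_1$ is small, since the argument of an inner coefficient would otherwise be negative, and it is precisely these exceptional small-index terms that break the pure $\prod_k\sd{x_k}$ pattern and generate the non-$\sinhc$ pieces — the $\cd{x_i}$, the additive constants, and the polynomial correction $\frac{x_i^2}{16}$ — of \cref{thm:order_two}. Tracking them is what fixes the numerical constants ($8$, $-4$, and $\frac{1}{16}$) in $F_{g,n}^{(1)}$. Second, the resummation must be performed by hand: the factor $(2\alpha_j+1)$ in \cref{e:rec_other_bound} cancels the $(2\alpha_j+1)$ in the normalisation of \cref{theo:mirz_coeff}, so the $x_j$-series sums to $\cosh\div{x_j}=\cd{x_j}$ rather than to $\sinhc\div{x_j}$, which is exactly the mechanism producing the factors $\cd{x_i}$ and $\cd{x_i}\,\cd{x_j}$. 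Once these explicit functions are in hand, every error term falls within the claimed bound $\O[n]{\frac{\jap{\x}^3}{\jap{g}^2}\exp\div{x_1+\ldots+x_n}}$ by the same monomial-summation estimates used at first order, combined with the ratio estimates \cref{e:volume_same_euler,e:volume_n_plus_one,e:volume_sum}.
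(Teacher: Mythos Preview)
Your proposal is correct and follows essentially the same approach as the paper: apply $\delta_1$ to Mirzakhani's recursion, replace each inner coefficient by $V_{g',n'}$ plus a first-order remainder (\cref{prop:main_term_calpha}) to obtain an explicit leading term for $\delta_1 c_{g,n}(\alpha)$ (the paper's \cref{lemm:second_order_derivative}), discretely integrate via \cref{lemm:discrete_integral} to recover the coefficient approximation (\cref{prop:cgn_order_two}), and then resum using the identities of \cref{lemm:sum_p_k}. Your identification of the boundary effects (degenerate sums for small $\alpha_1$) and of the origin of the $\cd{}$-factors matches the paper's computation precisely; the only detail you leave implicit is the explicit evaluation $\sum_{i\geq 0} i(u_{i+1}-u_i)=\tfrac14$, needed to pin down the constants in the $\mathcal{B}$-term.
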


\Cref{thm:order_two} can then be obtained by using the expansions proved in~\cite{mirzakhani2015}: for any $g
\geq 1$,
  \begin{align*}
    \frac{V_{g,n-1}}{V_{g,n}}
    & = \frac{1}{8 \pi^2g} + \O[n]{\frac{1}{g^2}}\\
    \frac{V_{g-1,n+1}}{V_{g,n}}
    & = \frac{V_{g-1,n+1}}{V_{g,n-1}} \frac{V_{g,n-1}}{V_{g,n}}
      = \frac{1}{8 \pi^2g} + \O[n]{\frac{1}{g^2}}.
  \end{align*}

The key ingredient in the proof of \cref{prop:vgn_order_two} is the following approximation result for the volume
coefficients $(c_{g,n}(\alpha))_\alpha$ up to errors of size $V_{g,n}/\jap{g}^2$.

\begin{prp}
  \label{prop:cgn_order_two}
  For any  integers $g \geq 0$ and $n \geq 1$ such that $2g-2+n>0$,  
  \begin{equation*}
    \forall \alpha \in \N_0^n, \quad
    c_{g,n}(\alpha) = \hat{c}_{g,n}^{(1)}(\alpha) + \O[n]{\abso{\alpha}^4 \frac{V_{g,n}}{\jap{g}^2}}
  \end{equation*}
  where $\hat{c}_{g,n}^{(1)} : \N_0^n \rightarrow \R$ is the function defined by:
  \begin{align*}
    \hat{c}_{g,n}^{(1)}(\alpha)
    = V_{g,n}
    &+ 8 V_{g-1,n+1} \1{g \geq 1} \sum_{i=1}^n \paren*{p_1(\alpha_i) + \1{\alpha_i=0} - \frac{p_2(\alpha_i)}{4} - 2} \\
    & - 4 V_{g,n-1} \1{n \geq 2} \sum_{1 \leq i < j \leq n} 
      \paren*{p_1(\alpha_i) \, p_1(\alpha_j) + \1{\alpha_i = \alpha_j=0} - 2}
  \end{align*}
  and $p_1(X) := 2X+1$, $p_2(X) := (2X+1)(2X)$.
\end{prp}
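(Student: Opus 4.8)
The plan is to follow the blueprint of the first-order estimate \cref{prop:main_term_calpha} one order further: instead of merely showing that the discrete derivatives of $c_{g,n}$ are small, I will compute them \emph{explicitly} up to an error of size $V_{g,n}/\jap{g}^2$, and then recover $c_{g,n}(\alpha)$ itself through the discrete integration formula \cref{lemm:discrete_integral}. Concretely, I first check that $\hat{c}_{g,n}^{(1)}(\z{n}) = V_{g,n} = c_{g,n}(\z{n})$ (both correction sums vanish at $\alpha=\z{n}$, since $p_1(0)+1-0-2=0$ and $p_1(0)^2+1-2=0$), so it suffices to match discrete derivatives. By symmetry of the coefficients it is enough to treat $\delta_1$, the goal being
\[
  \delta_1 c_{g,n}(\alpha) = \delta_1 \hat{c}_{g,n}^{(1)}(\alpha) + \O[n]{\jap{\alpha}^3 \tfrac{V_{g,n}}{\jap{g}^2}}.
\]

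To compute $\delta_1 c_{g,n}(\alpha)$, I apply Mirzakhani's recursion \cref{theo:rec_mirz} and telescope the $i$-sums exactly as in the proof of \cref{lemm:main_term_calpha}, which produces, for each of $\mathcal{A}, \mathcal{B}, \mathcal{C}$, an isolated $i=0$ term (carrying the factor $u_0=\tfrac12$) plus a rapidly convergent series weighted by $u_{i+1}-u_i = \O{4^{-i}}$. The separating contribution $\sum_\iota \delta_1\mathcal{C}_{g,n}^{(\iota)}$ is already $\O[n]{\jap{\alpha_1} V_{g,n}/\jap{g}^2}$ by the bound obtained in \cref{lemm:main_term_calpha} (via \cref{e:volume_sum,e:volume_n_plus_one}), hence absorbed into the error; this is why no product of volumes appears in $\hat{c}_{g,n}^{(1)}$. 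In the $\mathcal{A}$ and $\mathcal{B}$ terms I replace each sub-coefficient $c_{g,n-1}$, $c_{g-1,n+1}$ by its first-order value $V_{g,n-1}$, $V_{g-1,n+1}$ using \cref{prop:main_term_calpha}; since $V_{g,n-1}, V_{g-1,n+1} = \O[n]{V_{g,n}/\jap{g}}$ by \cref{e:volume_n_plus_one,e:volume_same_euler}, the resulting error, weighted by the $4^{-i}$ decay and the polynomial prefactors $(2\alpha_j+1)$, is $\O[n]{\jap{\alpha}^3 V_{g,n}/\jap{g}^2}$.

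The delicate points in this replacement are twofold. First, the leading coefficients are multiplied by exact moments of $(u_i)$: the $\mathcal{A}$ terms only require $\sum_{i}(u_{i+1}-u_i)=\tfrac12$, whereas the $\mathcal{B}$ term, whose inner sum over $k_1+k_2=i+\alpha_1-1$ has a number of summands growing linearly in $i$, additionally requires the first moment $\sum_{i} i\,(u_{i+1}-u_i)=\tfrac14$. I would establish the latter by summation by parts, $\sum_{i\ge1} i(u_{i+1}-u_i)=\sum_{k\ge1}(1-u_k)$, recognising $u_k=(1-2^{1-2k})\zeta(2k)=\eta(2k)$ as a value of the Dirichlet eta function so that $1-u_k=\sum_{m\ge2}(-1)^m m^{-2k}$, and summing the resulting $\tfrac12\sum_{m\ge2}(-1)^m\paren*{\tfrac1{m-1}-\tfrac1{m+1}}=\tfrac14$. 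Second, one must track the \emph{boundary corrections}: whenever a shifted index such as $\alpha_1+\alpha_j-1$ or $\alpha_1-2$ is negative, the corresponding coefficient vanishes rather than equalling $V_{g',n'}$, and these missing terms produce exactly the indicator contributions $\1{\alpha_i=0}$ in $\hat{c}_{g,n}^{(1)}$ (through $p_1(0)=1$). Assembling the $\mathcal{A}$ contributions yields $8V_{g,n-1}\sum_{j\ge2}p_1(\alpha_j)-4V_{g,n-1}\1{\alpha_1=0}\sum_{j\ge2}\1{\alpha_j=0}$ and the $\mathcal{B}$ contribution yields $8V_{g-1,n+1}\paren*{2\alpha_1-\tfrac12+\1{\alpha_1=0}}$, which I would verify coincide term-by-term with $\delta_1\hat{c}_{g,n}^{(1)}(\alpha)$.

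Finally, I apply the discrete integration formula \cref{lemm:discrete_integral} to the difference $c_{g,n}-\hat{c}_{g,n}^{(1)}$: since the two functions agree at $\z{n}$, the difference is a sum of at most $\abso{\alpha}$ discrete-derivative differences, each $\O[n]{\jap{\alpha}^3 V_{g,n}/\jap{g}^2}$, which gives the claimed $\O[n]{\abso{\alpha}^4 V_{g,n}/\jap{g}^2}$ (the passage from $\jap{\alpha}^3$ to $\abso{\alpha}^4$ being legitimate because the integration sum is empty, hence the error exactly $0$, at $\alpha=\z{n}$). I expect the main obstacle to be neither the moment identity nor the integration but the disciplined bookkeeping of the boundary corrections in the $\mathcal{A}$ and $\mathcal{B}$ terms — identifying precisely which shifted sub-indices fall below zero and checking that the surviving indicators reproduce $\hat{c}_{g,n}^{(1)}$ exactly — since any slip there would be invisible to the order-of-magnitude estimates yet would corrupt the explicit second-order formula.
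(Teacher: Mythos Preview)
Your proposal is correct and follows essentially the same approach as the paper: compute $\delta_1 c_{g,n}(\alpha)$ explicitly from the topological recursion (this is \cref{lemm:second_order_derivative}), then recover $c_{g,n}$ by the discrete integration lemma. The only difference is organizational --- you differentiate the given $\hat{c}_{g,n}^{(1)}$ and verify the match, whereas the paper integrates the derivative $\psi_{g,n}^{(1)}$ to produce $\hat{c}_{g,n}^{(1)}$ (and cites \cite{mirzakhani2015} for $\sum_i i(u_{i+1}-u_i)=\tfrac14$ rather than deriving it via the eta function as you do).
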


Similarly to the first-order case presented before, the proof of \cref{prop:cgn_order_two} spans over
\cref{sec:discrete-derivative,sec:discrete-integration}, and we then deduce \cref{prop:vgn_order_two} from it in
\cref{s:proof_volume_order_2}.

\subsection{Second-order estimate of the discrete derivative}
\label{sec:discrete-derivative}

In order to expand the coefficients $(c_{g,n}(\alpha))_{\alpha \in \N_0^n}$, we first estimate the
discrete derivative $\delta_1 c_{g,n}(\alpha)$. 

\begin{lem}
  \label{lemm:second_order_derivative}
  For any  integers $g \geq 0$ and $n \geq 1$, satisfying $2g-2+n>0$,  
  \begin{equation*}
    \forall \alpha \in \N_0^n, \quad
    \delta_1 c_{g,n}(\alpha) = \psi_{g,n}^{(1)}(\alpha) + \O[n]{\jap{\alpha}^3\frac{V_{g,n}}{\jap{g}^2}}
  \end{equation*}
  where $\psi_{g,n}^{(1)} : \N_0^n \rightarrow \R$ is the function defined by:
  \begin{align*}
    \psi_{g,n}^{(1)}(\alpha)
    = & 4 \, (4 \alpha_1 - 1 + 2 \, \1{\alpha_1=0}) V_{g-1,n+1} \1{g \geq 1} \\
    & + 4 \sum_{j=2}^n (4 \alpha_j+2 - \1{\alpha_1=\alpha_j=0}) V_{g,n-1} \1{n \geq 2}.
  \end{align*}
\end{lem}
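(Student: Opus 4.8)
The plan is to refine the first-order argument of \cref{lemm:main_term_calpha} by extracting one further term from each contribution of Mirzakhani's recursion. As there, the case $\abso{\chi} = 2g-2+n = 1$ is trivial, so I assume $\abso{\chi} > 1$ and apply \cref{theo:rec_mirz} to split $\delta_1 c_{g,n}(\alpha)$ into $\sum_{j=2}^n \delta_1 \mathcal{A}_{g,n}^{(j)}(\alpha) + \delta_1 \mathcal{B}_{g,n}(\alpha) + \sum_{\iota} \delta_1 \mathcal{C}_{g,n}^{(\iota)}(\alpha)$. The unifying idea is that each term produces coefficients $c_{g',n'}(\beta)$ of strictly smaller Euler characteristic; I replace each by its leading value $V_{g',n'}$ using the first-order estimate \cref{prop:main_term_calpha}, which costs $\O[n]{\abso{\beta}^2 V_{g',n'}/\jap{g}}$. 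Since the relevant volumes satisfy $V_{g,n-1}, V_{g-1,n+1} = \O[n]{V_{g,n}/\jap{g}}$ by \cref{e:volume_same_euler,e:volume_n_plus_one}, each replacement error is of size $\O[n]{\abso{\beta}^2 V_{g,n}/\jap{g}^2}$, and after summing over the finitely many, geometrically weighted terms it will be of the desired order $\O[n]{\jap{\alpha}^3 V_{g,n}/\jap{g}^2}$.

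For the $\mathcal{A}$-contributions I would start from the explicit expression of $\delta_1 \mathcal{A}_{g,n}^{(j)}(\alpha)$ already computed in the proof of \cref{lemm:main_term_calpha}. Replacing every $c_{g,n-1}(\beta)$ by $V_{g,n-1}$ and using $\sum_{i \geq 0}(u_{i+1}-u_i) = \tfrac12$ (from \cref{lemm:behaviour_u_i}) produces the leading term $4(2\alpha_j+1)(2 - \1{\alpha_1=\alpha_j=0})V_{g,n-1}$, which simplifies to $4(4\alpha_j + 2 - \1{\alpha_1=\alpha_j=0})V_{g,n-1}$ since the indicator forces $\alpha_j = 0$; this indicator records the vanishing $c_{g,n-1}(\alpha_1+\alpha_j-1, \ldots) = 0$ when $\alpha_1 = \alpha_j = 0$. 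The corresponding error is controlled using the convergence of $\sum_i (u_{i+1}-u_i)\, i^2$, guaranteed by the decay \cref{e:speed_convergence_u}. Summing over the $n-1$ values of $j$ (whence the factor $\1{n \geq 2}$) gives exactly the second line of $\psi_{g,n}^{(1)}$, up to $\O[n]{\jap{\alpha}^3 V_{g,n}/\jap{g}^2}$.

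For the $\mathcal{B}$-contribution I proceed identically from the explicit expression of $\delta_1 \mathcal{B}_{g,n}(\alpha)$. After replacing $c_{g-1,n+1}(\beta)$ by $V_{g-1,n+1}$ I must count lattice points: there are $\alpha_1 - 1 + \1{\alpha_1=0}$ pairs with $k_1 + k_2 = \alpha_1 - 2$, and $i + \alpha_1$ pairs with $k_1 + k_2 = i + \alpha_1 - 1$, so the leading term reads $(8(\alpha_1 - 1 + \1{\alpha_1=0}) + 16\sum_{i \geq 0}(u_{i+1}-u_i)(i+\alpha_1))V_{g-1,n+1}$. The genuinely new input, and what I expect to be the main obstacle, is the exact evaluation of $S := \sum_{i \geq 0} i\,(u_{i+1}-u_i)$: first-order estimates only needed $\sum_i (u_{i+1}-u_i) = \tfrac12$, but matching $\psi_{g,n}^{(1)}$ requires the precise value $S = \tfrac14$. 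I would obtain it by Abel summation, which telescopes to $S = \sum_{i \geq 1}(1 - u_i)$, followed by the identity $u_i = (1-2^{1-2i})\zeta(2i) = \eta(2i)$; writing $1 - \eta(2i) = \sum_{k \geq 2}(-1)^k k^{-2i}$, swapping the two summations, and telescoping via the partial fractions $\frac{1}{k^2-1} = \frac12(\frac{1}{k-1} - \frac{1}{k+1})$ yields $S = \sum_{k \geq 2}\frac{(-1)^k}{k^2-1} = \tfrac14$. With $S = \tfrac14$ the leading term collapses to $4(4\alpha_1 - 1 + 2\,\1{\alpha_1=0})V_{g-1,n+1}$, the first line of $\psi_{g,n}^{(1)}$; the factor $\1{g \geq 1}$ appears because the non-separating term exists only when $g \geq 1$. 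The error is bounded as before, using the convergence of $\sum_i (u_{i+1}-u_i)\, i^3$.

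Finally, the separating $\mathcal{C}$-contribution carries no leading term: I would reuse the bound $\delta_1 \mathcal{C}_{g,n}^{(\iota)}(\alpha) = \O[n]{\jap{\alpha_1} V_{g',\abso{I}+1} V_{g-g',\abso{J}+1}}$ from \cref{lemm:main_term_calpha}, then sum over $\iota \in \mathcal{I}_{g,n}$ using \cref{e:volume_sum,e:volume_n_plus_one} to obtain $\O[n]{\jap{\alpha_1} V_{g,n}/\jap{g}^2}$, which is absorbed into the error. Adding the three contributions yields $\delta_1 c_{g,n}(\alpha) = \psi_{g,n}^{(1)}(\alpha) + \O[n]{\jap{\alpha}^3 V_{g,n}/\jap{g}^2}$. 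Beyond the evaluation $S = \tfrac14$, the only delicate points are the careful lattice-point counts and the bookkeeping of the boundary indicators $\1{\alpha_1=0}$ and $\1{\alpha_1=\alpha_j=0}$, which encode the exceptional behaviour of the coefficients at small multi-indices.
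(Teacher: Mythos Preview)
Your proposal is correct and follows essentially the same route as the paper: split via Mirzakhani's recursion, replace each $c_{g',n'}(\beta)$ by $V_{g',n'}$ using \cref{prop:main_term_calpha}, track the boundary indicators, and observe that the separating contribution already lies in the error by \eqref{e:estimate_C_one}. The one difference is that the paper simply cites \cite[Lemma 2.1]{mirzakhani2015} for the value $\sum_{i \geq 0} i\,(u_{i+1}-u_i) = \tfrac14$, whereas you supply a self-contained derivation via Abel summation and the Dirichlet eta function; this is a nice addition but not a change of strategy.
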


\begin{proof}
  We apply the discrete derivation to Mirzakhani's recursion formula, \cref{theo:rec_mirz}:
  \begin{equation*}
    \delta_1c_{g,n}(\alpha)
    = \sum_{j=2}^n \delta_1\mathcal{A}_{g,n}^{(j)}(\alpha)
    + \delta_1\mathcal{B}_{g,n}(\alpha)
    + \sum_{\iota \in \mathcal{I}_{g,n}} \delta_1\mathcal{C}_{g,n}^{(\iota)}(\alpha).
  \end{equation*}
  We then replace every term by the first-order approximation given by \cref{prop:main_term_calpha}, which will allow us to estimate them
  up to errors of size $V_{g,n}/\jap{g}^2$.

  We notice that the first term is zero if $n=1$. Let us assume otherwise, and take
  $j \in \{2, \ldots, n\}$. As in the first-order case, we can write
  \begin{align*}
    \delta_1\mathcal{A}_{g,n}^{(j)}(\alpha)
    =& \; 4 \, (2\alpha_j+1) \;
       c_{g,n-1}(\alpha_1+\alpha_j-1,\alpha_2, \ldots, \hat{\alpha}_j, \ldots, \alpha_n) \\
    & + 8 \, (2\alpha_j+1) \sum_{i=0}^{+\infty}  
      (u_{i+1} - u_{i}) \;
      c_{g,n-1}(i+\alpha_1+\alpha_j,\alpha_2, \ldots, \hat{\alpha}_j, \ldots, \alpha_n) \\
    =: & \; T_1+T_2.
  \end{align*}
  \begin{itemize}
  \item Estimate of the term $T_1$:
    \begin{itemize}
    \item If $\alpha_1=\alpha_j=0$, then $\alpha_1+\alpha_j-1 <0$ and therefore $T_1=0$.
    \item Otherwise, by \cref{prop:main_term_calpha} applied to the coefficient in $T_1$,
      \begin{align*}
        T_1
        & = 4 \, (2\alpha_j+1) V_{g,n-1} + \O[n]{(2 \alpha_j+1)\abso{\alpha}^2 \frac{V_{g,n-1}}{\jap{g}}} \\
        & = 4 \, (2\alpha_j+1) V_{g,n-1} + \O[n]{\jap{\alpha}^3 \frac{V_{g,n}}{\jap{g}^2}}
      \end{align*}
      by \cref{e:volume_n_plus_one}.
    \end{itemize}
  \item To estimate of the term $T_2$, we replace the volume coefficients appearing in $T_2$ by their first-order
    approximation and obtain:
    \begin{equation*}
      T_2
       = 8 \, (2 \alpha_j+1) \sum_{i=0}^{+\infty}  
        (u_{i+1} - u_{i}) \paren*{V_{g,n-1} + \O[n]{(i + \abso*{\alpha})^2 \frac{V_{g,n-1}}{\jap{g}}}} 
    \end{equation*}
    by \cref{prop:main_term_calpha}. But \cref{lemm:behaviour_u_i} implies that
    $\sum_{i=0}^{+ \infty} (u_{i+1} - u_i) = \frac 12$ and $\sum_{i=0}^{+ \infty} i(u_{i+1} - u_i)$
    converges. Therefore, by \cref{e:volume_n_plus_one} again,
    \begin{equation*}
      T_2
      = 4 \, (2 \alpha_j+1) V_{g,n-1} +  \O[n]{\jap{\alpha}^3 \frac{V_{g,n}}{\jap{g}^2}}.
    \end{equation*}
  \end{itemize}
  As a conclusion, we have proved that
  \begin{equation*}
    \delta_1 \mathcal{A}_{g,n}^{(j)}(\alpha) =
    \begin{cases}
      4 \, V_{g,n-1} + \O[n]{\jap{\alpha}^3 \frac{V_{g,n}}{\jap{g}^2}} & \text{if } \alpha_1 = \alpha_j = 0 \\
      8 \, (2 \alpha_j+1) V_{g,n-1} +  \O[n]{\jap{\alpha}^3 \frac{V_{g,n}}{\jap{g}^2}} & \text{otherwise.}
    \end{cases}
  \end{equation*}
  We rewrite this expression as
  \begin{equation*}
    \delta_1 \mathcal{A}_{g,n}^{(j)}(\alpha) =
    4 \, (4 \alpha_j+2 - \1{\alpha_1=\alpha_j=0}) V_{g,n-1} \1{n \geq 2}
    + \O[n]{\jap{\alpha}^3 \frac{V_{g,n}}{\jap{g}^2}}.
  \end{equation*}

  By the same process, we prove that, when $g \geq 1$, 
  \begin{equation*}
    \delta_1 \mathcal{B}_{g,n}(\alpha) =
    4 \, (4\alpha_1-1 + 2 \, \1{\alpha_1=0}) V_{g-1,n+1} \1{g \geq 1}
    + \O[n]{\jap{\alpha}^3 \frac{V_{g,n}}{\jap{g}^2}}.
  \end{equation*}
  Indeed,
  \begin{align*}
    \delta_1\mathcal{B}_{g,n}(\alpha)
    = & \, 8 \sum_{k_1+k_2=\alpha_1-2} c_{g-1,n+1}(k_1,k_2,\alpha_2, \ldots, \alpha_n) \\
      & + 16 \sum_{i=0}^{+\infty} \sum_{k_1+k_2=i+\alpha_1-1}
        (u_{i+1} - u_i) \; c_{g-1,n+1}(k_1,k_2,\alpha_2, \ldots, \alpha_n) \\
        =: & \,  T_1+T_2.
  \end{align*}
  \begin{itemize}
  \item On the one hand, $T_1$ is equal to zero if $\alpha_1=0$, and otherwise,
    \begin{equation*}
      T_1 = 8 \, (\alpha_1-1) V_{g-1,n+1} + \O[n]{\jap{\alpha}^3 \frac{V_{g,n}}{\jap{g}^2}}
    \end{equation*}
    because $V_{g-1,n+1} = \O[n]{V_{g,n}/\jap{g}}$ by \cref{e:volume_n_plus_one,e:volume_same_euler}.
  \item On the other hand,
    \begin{align*}
      T_2 & = 16  \sum_{i=0}^{+ \infty} (\alpha_1+i)(u_{i+1} - u_i) V_{g-1,n+1}
            + \O[n]{\jap{\alpha}^3 \frac{V_{g-1,n+1}}{\jap{g-1}}}\\
          & = 4 \, (2 \alpha_1+1) V_{g-1,n+1} + \O[n]{\jap{\alpha}^3 \frac{V_{g,n}}{\jap{g}^2}}
    \end{align*}
    because, as before,  $\sum_{i=0}^{+ \infty} (u_{i+1} - u_i) = \frac 12$, and also
    $\sum_{i=0}^{+ \infty} i(u_{i+1} - u_i) = \frac 14$ by \cite[Lemma 2.1]{mirzakhani2015}.
  \end{itemize}
  
  Finally, we observe that, when computing the first order term, we have proved in \cref{e:estimate_C_one} that
  \begin{equation*}
    \sum_{\iota \in \mathcal{I}_{g,n}} \delta_1 \mathcal{C}_{g,n}^{(\iota)}(\alpha)
    = \O[n]{\jap{\alpha} \frac{V_{g,n}}{\jap{g}^2}}.
  \end{equation*}
  As a consequence, the separating term (C) does not contribute to the second-order approximation of $\delta_1c_{g,n}(\alpha)$.

  Summing the different terms $\delta_1 \mathcal{A}_{g,n}^{(j)}$ for $j \in \{2, \ldots, n\}$ and $\delta_1
  \mathcal{B}_{g,n}(\alpha)$ leads to the claim.
\end{proof}

\subsection{Discrete integration of the second-order estimate}
\label{sec:discrete-integration}

We can now prove \cref{prop:cgn_order_two} using \cref{lemm:second_order_derivative} and discrete integration.

\begin{proof}
  By the discrete integration lemma (\cref{lemm:discrete_integral}) and by symmetry of the volume coefficients,
  \begin{equation*}
    c_{g,n}(\alpha)
    = V_{g,n} - \sum_{i=1}^n \sum_{k=0}^{\alpha_i-1} \delta_1 c_{g,n}(k, \z{i-1}, \alpha_{i+1}, \ldots,
    \alpha_n).
  \end{equation*}
  We then apply \cref{lemm:second_order_derivative} to deduce
  \begin{equation*}
    c_{g,n}(\alpha)
    = V_{g,n} - \sum_{i=1}^n \sum_{k=0}^{\alpha_i-1} \psi^{(1)}_{g,n}(k, \z{i-1}, \alpha_{i+1}, \ldots,
      \alpha_n)  + \O[n]{\jap{\alpha}^4 \frac{V_{g,n}}{\jap{g}^2}}.
  \end{equation*}
  This can be rewritten as 
  \begin{equation}
    \label{eq:second_c_t12}
    c_{g,n}(\alpha)
    = V_{g,n} - 4  V_{g-1,n+1} \1{g \geq 1} T_1 - 4 V_{g,n-1} \1{n \geq 2} T_2 + \O[n]{\jap{\alpha}^4 \frac{V_{g,n}}{\jap{g}^2}}
  \end{equation}
  where the quantities $T_1$ and $T_2$ are defined by
  \begin{align*}
    T_1 & := \sum_{i=1}^n \sum_{k=0}^{\alpha_i-1} (4k-1 + 2 \, \1{k=0}) \\
    T_2 & := \sum_{i=1}^n \sum_{k=0}^{\alpha_i-1} \brac*{ (i-1) (2 -
                      \1{k=0}) + \sum_{j=i+1}^n (4 \alpha_j + 2 - \1{k=\alpha_j=0})}.
  \end{align*}
  \begin{itemize}
  \item On the one hand, we observe that for a fixed $i$, the term $\1{k=0}$ contributes to the sum at most once, and
    this occurs if and only if $\alpha_i>0$. Hence, when we perform the sum over $k$, we obtain
    \begin{equation*}
      T_1 = \sum_{i=1}^n \paren*{2 \alpha_i^2 - 3 \alpha_i + 2 \, \1{\alpha_i>0}}.
    \end{equation*}
    We reorder the sum according to the dependency over $\alpha$, and use the fact that
    $\1{\alpha_i>0} = 1 - \1{\alpha_i=0}$, to obtain
        \begin{equation*}
          T_1 =
          \sum_{i=1}^n
          \paren*{ - 2(2\alpha_i+1) - 2 \,\1{\alpha_i=0} + \frac{(2\alpha_i+1)(2 \alpha_i)}{2} + 4}.
        \end{equation*}
  \item On the other hand, by the same method,
    \begin{align*}
      T_2 & = \sum_{i=1}^n \brac*{ (i-1) (2 \alpha_i - \1{\alpha_i>0})
            + \sum_{j=i+1}^n (4 \alpha_i \alpha_j + 2 \alpha_i - \1{\alpha_i>0} \1{\alpha_j=0})} \\
          & = 4 \sum_{i<j} \alpha_i \alpha_j + 2 (n-1) \sum_{i =1}^n \alpha_i
            + \sum_{i>j} (\1{\alpha_i=0} - 1) + \sum_{i<j} (\1{\alpha_i=0} - 1) \1{\alpha_j=0} \\
          & = \sum_{i<j} \paren*{(2\alpha_i+1)(2 \alpha_j+1) + \1{\alpha_i=\alpha_j=0} -2}.
    \end{align*}
    This allows us to conclude, by \cref{eq:second_c_t12}.
  \end{itemize}
\end{proof}

\subsection{From the coefficient estimate to the volume estimate}
\label{s:proof_volume_order_2}

In order to conclude the proof of \cref{prop:vgn_order_two}, we need to compute
\begin{equation*}
  \sum_{\alpha \in \N_0^n} \hat{c}_{g,n}^{(1)}(\alpha) \prod_{j=1}^n \frac{x_j^{2\alpha_j}}{2^{2 \alpha_j} (2 \alpha_j+1)!}
\end{equation*}
where $\hat{c}^{(1)}_{g,n}(\alpha)$ is the approximation of the coefficient $c_{g,n}(\alpha)$ from
\cref{prop:cgn_order_two}.  We have expressed $\hat{c}^{(1)}_{g,n}$ in terms of polynomials $p_1(X) = 2X+1$ and
$p_2(X) = (2X+1)(2X)$ in order to make this computation easier.

Since this will be useful for the general case, let us set some notations.
\begin{nota}
  \label{nota:p_k}
  For any integer $k \geq 0$, we set
  \begin{equation*}
    p_k(X) =  \prod_{j=0}^{k-1} (2X+1-j)
    = (2X+1) (2X) (2X-1) \ldots (2X+2-k),
  \end{equation*}
  with the convention that the empty product is equal to one so that $p_0(X)=1$. 
\end{nota}

Since the polynomials $(p_k)_{k \geq 0}$ are a basis of the set of polynomials, we will be able to express any
polynomial function as a linear combination of these polynomials. The following simple observation is our motivation for
the introduction of these polynomials.

\begin{lem}
  \label{lemm:sum_p_k}
  Let $k \geq 0$ be an integer. For any $x \in \R$,
  \begin{equation*}
    \sum_{\alpha=0}^{+ \infty} \frac{p_{k}(\alpha) \, x^{2 \alpha}}{2^{2 \alpha}(2 \alpha+1)!}
    =
    \begin{cases}
      \frac{x^{k}}{2^{k}} \sinhc \div{x} & \text{if } k \text{ is even} \\
      \frac{x^{k-1}}{2^{k-1}} \cosh \div{x} & \text{if } k \text{ is odd}.
    \end{cases}
  \end{equation*}
\end{lem}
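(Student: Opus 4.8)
The plan is to recognise that the polynomial $p_k$ is, up to the vanishing caused by its integer roots, a ratio of factorials, which collapses the combinatorial factor in the summand. Writing out $p_k(\alpha) = (2\alpha+1)(2\alpha)(2\alpha-1)\cdots(2\alpha+2-k)$, I observe that this product vanishes exactly when one of its factors is zero, i.e.\ when $2\alpha+1 \leq k-1$, and that otherwise $p_k(\alpha) = (2\alpha+1)!/(2\alpha+1-k)!$. Substituting this into the series, the factor $(2\alpha+1)!$ cancels, and after grouping $x^{2\alpha}/2^{2\alpha} = \div{x}^{2\alpha}$ the sum reduces to
\begin{equation*}
  \sum_{\alpha \,:\, 2\alpha+1 \geq k} \frac{\div{x}^{2\alpha}}{(2\alpha+1-k)!}.
\end{equation*}

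Next, I would reindex with $m := 2\alpha+1-k \geq 0$, so that $2\alpha = m+k-1$ and the general term becomes $\div{x}^{m+k-1}/m!$. The parity of $m$ is then forced by that of $k$: since $2\alpha+1$ is always odd, $m = (2\alpha+1)-k$ runs precisely over the non-negative integers of parity opposite to $k$. Thus for $k$ even, $m$ ranges over the positive odd integers, and for $k$ odd, $m$ ranges over the non-negative even integers. Pulling out the common factor $\div{x}^{k-1}$ and invoking the elementary identities $\sum_{m \text{ odd}} t^m/m! = \sinh t$ and $\sum_{m \text{ even}} t^m/m! = \cosh t$ with $t = \div{x}$, I obtain $\div{x}^{k-1}\sinh\div{x}$ in the even case and $\div{x}^{k-1}\cosh\div{x}$ in the odd case. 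Rewriting $\div{x}^{k-1}\sinh\div{x} = \frac{x^k}{2^k}\sinhc\div{x}$ via $\sinhc\div{x} = \sinh\div{x}/\div{x}$, together with $\div{x}^{k-1} = x^{k-1}/2^{k-1}$, then yields exactly the two claimed expressions.

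The computation is entirely elementary and the series converge everywhere, so there is no analytic subtlety. The only point requiring genuine care --- and the main potential source of error --- is the bookkeeping around the vanishing of $p_k$ for small $\alpha$ and the parity of the reindexed variable $m$: one must check that the lower end of the summation, where $p_k(\alpha)=0$, is correctly excised, and that $m=0$ is included precisely when $k$ is odd (consistent with the odd-case sum then starting at the constant term of $\cosh$). Verifying the base cases $k=0$, where the claim recovers the defining series of $\sinhc\div{x}$, and $k=1$, where it recovers $\cosh\div{x}$, provides a useful sanity check on the parity convention.
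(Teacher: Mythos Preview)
Your proof is correct. The paper does not actually supply a proof of this lemma; it introduces the polynomials $p_k$ as a convenient basis precisely because the identity is an immediate computation, and states the lemma as ``the following simple observation'' before moving on. Your argument is exactly the natural one: recognise $p_k(\alpha)$ as the falling factorial $(2\alpha+1)!/(2\alpha+1-k)!$ on the range where it is non-zero, cancel, reindex, and read off the Taylor series of $\sinh$ or $\cosh$ according to the parity of the new index. The bookkeeping you flag (vanishing of $p_k$ for small $\alpha$, parity of $m$, inclusion of $m=0$ in the odd case) is handled correctly.
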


We can now finish the proof of \cref{prop:vgn_order_two}.

\begin{proof}
  By \cref{prop:cgn_order_two} and the expression of $V_{g,n}(\x)$ in terms of $(c_{g,n}(\alpha))_\alpha$,
  \begin{equation*}
    \frac{V_{g,n}(\x)}{V_{g,n}} = F_{g,n}^{(1)}(\x)
    + \O[n]{\frac{V_{g,n}}{\jap{g}^2} \sum_{\substack{\alpha \in \N_0^n \\ \alpha \neq \z{n}}}
      \abso{\alpha}_\infty^4 \prod_{j=1}^n\frac{x_j^{2 \alpha_j}}{2^{2 \alpha_j} (2 \alpha_j+1)!}}
  \end{equation*}
  where
  \begin{equation*}
    F_{g,n}^{(1)}(\x)
    =  \sum_{\alpha \in \N_0^n} \frac{\hat{c}_{g,n}^{(1)}(\alpha)}{V_{g,n}} \prod_{j=1}^n \frac{x_j^{2
        \alpha_j}}{2^{2 \alpha_j} (2 \alpha_j+1)!} \cdot
  \end{equation*}
  We replace $\hat{c}_{g,n}^{(1)}$ by its expression from \cref{prop:cgn_order_two}, and find the claimed
  expression by \cref{lemm:sum_p_k}.  The remainder is
  \begin{equation*}
    \O[n]{\frac{V_{g,n}}{\jap{g}^2} \jap{\x}^3 \exp \div{x_1+\ldots+x_n}}
  \end{equation*}
  because for all $y \geq 0$,
  \begin{align*}
    \sum_{k=1}^{+ \infty} \frac{k^4 y^{2k}}{2^{2k}(2k+1)!}
    \leq y^2 + \sum_{k=2}^{+ \infty} \frac{p_4(k) \, y^{2k}}{2^{2k}(2k+1)!}
    = \O{ \jap{y}^3 \exp \div y}.
  \end{align*}
\end{proof}

\section{Proof of \cref{thm:derivative_coeff}}
\label{sec:proof:coeff}

The aim of this section is to prove \cref{thm:derivative_coeff}, i.e. that for any order $N$, 
\begin{equation*}
  \delta^{\mathbf{m}}c_{g,n}(\alpha) = \O[n,N]{ \jap{\alpha}^{N} \frac{V_{g,n}}{\jap{g}^N}},
\end{equation*}
for any $\mathbf{m} \in \N_0^n$ such that $\abso{\mathbf{m}} \in \{2N-1,2N\}$ and any large enough multi-index $\alpha$.  The proof
relies on Mirzakhani's recursion formula, \cref{theo:rec_mirz}.  In order to be able to apply the discrete differential
operator $\delta$ on its terms \eqref{e:rec_non_sep} and \eqref{e:rec_sep}, we will use the following lemma.

\begin{lem} 
  \label{lem:delta_sum_k1_k2}
  Let $(c_{k_1,k_2})_{k_1,k_2 \geq 0}$ be a family of real numbers, and $v : \N_0 \rightarrow \R$ be the function
  defined by
  \begin{equation*}
    \forall k \geq 0, \quad 
    v_k := \sum_{\substack{k_1 + k_2 = k \\ k_1, k_2 \geq 0}} c_{k_1,k_2}.
  \end{equation*}
  Then, for any integers
  $m \geq 1$ and $k \geq 0$,
  \begin{equation*}
    \delta^m v_k = \sum_{\substack{k_1+k_2=k \\ k_1 \geq k_2}} \delta_1^mc_{k_1,k_2}
    + \sum_{\substack{k_1+k_2=k \\ k_1 < k_2}} \delta_2^mc_{k_1,k_2}
    - \sum_{m_1+m_2=m-1} \delta_1^{m_1} \delta_2^{m_2} c_{\floor{\frac{k+1}{2}},\floor{\frac k2}+1}.
  \end{equation*}
\end{lem}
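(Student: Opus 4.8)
The plan is to prove the identity by induction on $m$, where the base case and the inductive step both rest on a single combinatorial \emph{lifting} observation about antidiagonals. Throughout I write $\delta w_k = w_k - w_{k+1}$ for the one-variable difference, so that $\delta^{m+1} v_k = \delta^m v_k - \delta^m v_{k+1}$, and I abbreviate the \emph{main sum} and the \emph{remainder} by
\[
  \Delta_k^{(m)} := \sum_{\substack{k_1+k_2=k \\ k_1\geq k_2}} \delta_1^m c_{k_1,k_2}
  + \sum_{\substack{k_1+k_2=k \\ k_1<k_2}} \delta_2^m c_{k_1,k_2},
  \qquad
  R_k^{(m)} := \sum_{m_1+m_2=m-1} \delta_1^{m_1}\delta_2^{m_2} c_{M(k)},
\]
where $M(k) := (\floor{\frac{k+1}{2}},\floor{\frac k2}+1)$. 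The statement to prove is then $\delta^m v_k = \Delta_k^{(m)} - R_k^{(m)}$, and I note that it holds trivially for $m=0$, since $\delta_1^0=\delta_2^0=\mathrm{id}$ recombines the two halves into $\Delta_k^{(0)}=v_k$, while $R_k^{(0)}$ is an empty sum.

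The heart of the argument is the \emph{lifting identity} $\Delta_k^{(m)} - \Delta_{k+1}^{(m)} = \Delta_k^{(m+1)} - \varepsilon_k$, valid for all $m\geq 0$, where $\varepsilon_k = \delta_2^m c_{M(k)}$ for $k$ even and $\varepsilon_k = \delta_1^m c_{M(k)}$ for $k$ odd. To establish it I would apply, to the sequences $\delta_1^m c$ and $\delta_2^m c$, the bijection that sends a point $(k_1,k_2)$ of the antidiagonal $\{k_1+k_2=k\}$ to $(k_1+1,k_2)$ on the upper half $\{k_1\geq k_2\}$ and to $(k_1,k_2+1)$ on the lower half $\{k_1<k_2\}$. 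One checks that this map lands precisely on the antidiagonal of sum $k+1$ minus the single point $M(k)$, that it matches the operator dictated by the halves ($\delta_1^m$ on $j_1>j_2$, $\delta_2^m$ on $j_1<j_2$), and hence that $\Delta_{k+1}^{(m)}$ equals the lifted negative terms plus the leftover $\varepsilon_k$; each matched pair then telescopes into a $\delta_1^{m+1}$ or $\delta_2^{m+1}$ term, giving $\Delta_k^{(m+1)}$. In particular the case $m=0$ already yields the claim for $m=1$. Alongside this I would record the staircase behaviour $M(k+1)=M(k)+(1,0)$ for $k$ even and $M(k+1)=M(k)+(0,1)$ for $k$ odd, which gives
\[
  R_k^{(m)} - R_{k+1}^{(m)}
  = \sum_{m_1+m_2=m-1} \delta_1^{m_1+1}\delta_2^{m_2} c_{M(k)}
  \quad(k\text{ even}),
\]
and the symmetric expression with $\delta_2^{m_2+1}$ for $k$ odd.

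With these two facts the induction closes quickly: from $\delta^{m+1}v_k = (\Delta_k^{(m)}-\Delta_{k+1}^{(m)}) - (R_k^{(m)}-R_{k+1}^{(m)}) = \Delta_k^{(m+1)} - \varepsilon_k - (R_k^{(m)}-R_{k+1}^{(m)})$ it remains only to verify $\varepsilon_k + (R_k^{(m)}-R_{k+1}^{(m)}) = R_k^{(m+1)}$. For $k$ even the shifted remainder supplies all terms $\delta_1^{m_1}\delta_2^{m_2}c_{M(k)}$ with $m_1+m_2=m$ and $m_1\geq 1$, and the leftover $\varepsilon_k=\delta_2^m c_{M(k)}$ provides exactly the missing $m_1=0$ term, so the sum closes up to $R_k^{(m+1)}$; the case $k$ odd is symmetric. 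The only genuinely delicate point — and the main obstacle — is the bookkeeping inside the lifting identity: confirming that the unique uncovered index on the antidiagonal of sum $k+1$ is exactly $M(k)$, that it carries the operator prescribed by whether it lies on or below the diagonal $j_1=j_2$, and that this is consistent with the direction in which $M(\cdot)$ advances. Once this is pinned down, the two parity cases are handled uniformly and everything else is routine telescoping.
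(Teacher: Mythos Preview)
Your proof is correct and follows essentially the same route as the paper's: induction on $m$ starting from the trivial $m=0$ case, applying $\delta^{m+1}v_k=\delta^m v_k-\delta^m v_{k+1}$, and matching the two antidiagonals via the shifts $(k_1,k_2)\mapsto(k_1+1,k_2)$ on the upper half and $(k_1,k_2)\mapsto(k_1,k_2+1)$ on the lower half, which isolates exactly the boundary point $M(k)$ and telescopes the rest into $\Delta_k^{(m+1)}$. The paper carries this out by hand for $k=2p+1$ with sums labelled $S_1,\ldots,S_5$ and declares the even case analogous, whereas you package both parities uniformly through the notation $M(k)$ and the staircase relation $M(k+1)=M(k)+(1,0)$ or $(0,1)$; this is a cleaner bookkeeping but the underlying argument is identical.
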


\begin{proof}
  We prove the formula by induction on the integer $m$. The initialisation at $m=0$ is trivial.  For $m \geq 0$, let us
  assume the property at the rank $m$.  Let~$k \geq 0$ be an integer; we assume that $k=2p+1$ is an odd number (the
  proof when $k$ is even is the same). By definition of the operator $\delta$ and thanks to the induction hypothesis,
  \begin{align*}
    \delta^{m+1} v_k
    = & \delta^m v_k - \delta^m v_{k+1} \\
    = & \sum_{\substack{k_1+k_2=2p+1 \\ k_1 \geq k_2}} \delta_1^mc_{k_1,k_2}
    - \sum_{\substack{k_1+k_2=2p+2 \\ k_1 \geq k_2}} \delta_1^mc_{k_1,k_2} \\
      & + \sum_{\substack{k_1+k_2=2p+1 \\ k_1 < k_2}} \delta_2^mc_{k_1,k_2}
    - \sum_{\substack{k_1+k_2=2p+2 \\ k_1 < k_2}} \delta_2^mc_{k_1,k_2} \\
    & - \sum_{m_1+m_2=m-1} (\delta_1^{m_1} \delta_2^{m_2} c_{p+1,p+1}
      - \delta_1^{m_1} \delta_2^{m_2} c_{p+1,p+2}) \\
    =: & \,S_1 - S_2 + S_3 - S_4 - S_5.
  \end{align*}
  Let us perform a change of indices $k_1'=k_1-1$ in the sum $S_2$, singling out the term of $S_2$ for which $k_1=k_2=p+1$,
  so that we sum over the same set of indices as $S_1$. We obtain:
  \begin{equation*}
    S_1-S_2
    = \Bigg( \sum_{\substack{k_1+k_2=2p+1 \\ k_1 \geq k_2}} \delta_1^{m+1}c_{k_1,k_2} \Bigg) - \delta_1^mc_{p+1,p+1}.
  \end{equation*}
  There is no boundary term when we do the same to $S_3$ and $S_4$, now changing the index~$k_2$:
  \begin{equation*}
    S_3-S_4 = \sum_{\substack{k_1+k_2=2p+1 \\ k_1 < k_2}} \delta_2^{m+1}c_{k_1,k_2}.
  \end{equation*}
  We then observe that $\delta_1^mc_{p+1,p+1} + S_5$ is equal to
  \begin{equation*}
    \delta_1^mc_{p+1,p+1} + \sum_{m_1+m_2=m-1} \delta_1^{m_1} \delta_2^{m_2+1} c_{p+1,p+1}
    = \sum_{m_1+m_2=m} \delta_1^{m_1} \delta_2^{m_2} c_{p+1,p+1}
  \end{equation*}
  which leads  to the claimed expression for $\delta^{m+1}v_k$.
\end{proof}

We can now proceed to the proof of \cref{thm:derivative_coeff}, which we restate here for convenience.

\begin{thm}
  \label{thm:derivative_coeff_ibs}
  There exists an increasing sequence of integers $(a_N)_{N \geq 0}$ satisfying the following.
  For any integers $g \geq 0$, $n \geq 1$ such that $2g-2+n>0$, any multi-indices~$\mathbf{m}, \alpha \in \N_0^n$ such that:
  \begin{itemize}
  \item $\abso{\mathbf{m}}  = m_1 + \ldots + m_n \in \{2N-1,2N\}$
  \item $\forall i, (m_i \neq 0 \Rightarrow \alpha_i \geq a_N)$,
  \end{itemize}
  we have:
  \begin{equation*}
    \abso{\delta^{\mathbf{m}}c_{g,n}(\alpha)} \leq C_{n,N} \, \jap{\alpha}^{N} \frac{V_{g,n}}{\jap{g}^N} 
  \end{equation*}
  where $C_{n,N}>0$ is a constant that depends only on $n$ and $N$.
\end{thm}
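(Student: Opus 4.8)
The plan is to argue by induction on the absolute value of the Euler characteristic $\abso{\chi} = 2g-2+n$, mirroring the first- and second-order computations of \cref{lemm:main_term_calpha} and \cref{lemm:second_order_derivative} but keeping track of an arbitrary number of discrete derivatives. For the base case $\abso{\chi}=1$, the volume is an explicit polynomial of degree $\le 1$ in the $x_j^2$, so $c_{g,n}(\beta)$ is supported on $\abso{\beta}\le 1$; taking $a_N$ larger than this degree forces $\delta^{\mathbf{m}}c_{g,n}(\alpha)$ to vanish on the admissible range, and the estimate is trivial. For the inductive step I would fix $(g,n)$ with $\abso{\chi}\ge 2$, apply Mirzakhani's recursion \cref{theo:rec_mirz}, and then the operator $\delta^{\mathbf{m}}=\delta_1^{m_1}\cdots\delta_n^{m_n}$, so that $\delta^{\mathbf{m}}c_{g,n}(\alpha)$ splits into $\sum_j\delta^{\mathbf{m}}\mathcal{A}_{g,n}^{(j)}$, $\delta^{\mathbf{m}}\mathcal{B}_{g,n}$ and $\sum_\iota\delta^{\mathbf{m}}\mathcal{C}_{g,n}^{(\iota)}$. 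Every coefficient on the right-hand side has Euler characteristic $\abso{\chi}-1$, so the induction hypothesis (at an appropriate order $N'\le N$) is available for it; the task is to bound each of the three families by $C_{n,N}\jap{\alpha}^N V_{g,n}/\jap{g}^N$.

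Two mechanisms are needed to differentiate the right-hand side. First, $\alpha_1$ enters each term only through the shift of a summation index (through $i+\alpha_1+\ldots$), so one power of $\delta_1$ can be absorbed by reindexing the sum over $i$, exactly as in \cref{lemm:main_term_calpha}: this produces a boundary term and replaces the bounded weights $u_i$ by the differences $u_{i+1}-u_i$, which decay geometrically by \cref{lemm:behaviour_u_i}. Consequently every remaining sum over $i$ localises near $i=0$, keeping the size of the running index $\O[n]{\jap{\alpha}}$ rather than $\O[n]{g}$. Second, for $\mathcal{B}_{g,n}$ and $\mathcal{C}_{g,n}^{(\iota)}$ the index $\alpha_1$ also controls the length $k$ of an inner sum $\sum_{k_1+k_2=k}$, and here I would invoke \cref{lem:delta_sum_k1_k2}: it rewrites the remaining powers of $\delta_1$ as discrete derivatives of the summands in which the differentiated variable is always the larger of $k_1,k_2$, hence an index that is $\ge k/2$. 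This is the crucial structural point, since it guarantees that the slot carrying the derivatives is large whenever $\alpha_1$ is. The linear prefactor $2\alpha_j+1$ in $\mathcal{A}_{g,n}^{(j)}$ and the products of two coefficients in $\mathcal{C}_{g,n}^{(\iota)}$ are handled by the discrete Leibniz rule, at the cost of a bounded combinatorial factor depending only on $N$.

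After these manipulations every contribution is a finite combination of quantities $\delta^{\mathbf{m}'}c_{g',n'}(\beta)$ with $\abso{\chi'}=\abso{\chi}-1$, $\abso{\mathbf{m}'}\le\abso{\mathbf{m}}$, and each differentiated coordinate of $\beta$ at least $(\alpha_1-\O{1})/2$ or $\alpha_j-\O{1}$. In the \emph{bulk} regime, where these coordinates exceed the relevant sub-threshold, I would apply the induction hypothesis directly. For $\mathcal{A}_{g,n}^{(j)}$ and $\mathcal{B}_{g,n}$ one uses $V_{g,n-1},V_{g-1,n+1}=\O[n]{V_{g,n}/\jap{g}}$ from \cref{e:volume_same_euler,e:volume_n_plus_one}, and any surplus power of $\jap{\alpha}$ created by the length of an inner $k$-sum is absorbed against a surplus power of $\jap{g}$, using that the estimate is trivial unless $\jap{\alpha}=\O[n,N]{\jap{g}}$. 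For $\mathcal{C}_{g,n}^{(\iota)}$ the derivatives split into $N_1$ on one factor and $N_2$ on the other with $N_1+N_2\le N$; the induction hypothesis gives $\jap{\beta}^{N_i}V_{g_i,n_i+1}/\jap{g_i}^{N_i}$ on each side, and summing the product over all $\iota$ via the reinforced volume estimate \cref{lem:volume_sum_K} produces the gain $1/\jap{g}^{N_1+N_2+1}$, which after the conversion $V_{g,n-1}=\O[n]{\jap{g}\,V_{g,n}}$ yields exactly the desired order.

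The main obstacle is the \emph{boundary} regime, that is, the terms in which no index is large enough to feed the induction hypothesis. Because \cref{lem:delta_sum_k1_k2} only guarantees a differentiated index $\ge(\alpha_1-\O{1})/2$, the induction can only be closed once these halved indices exceed the relevant thresholds; arranging this across all orders is what forces $a_N$ to satisfy a doubling recursion $a_N\ge 2a_{N-1}+\O{1}$, and hence to grow like $2^N$ — the announced, non-optimal bound. The residual terms, where $\alpha_1$ is only of size $\O[N]{1}$ and both inner indices $k_1,k_2$ stay bounded, genuinely exhibit the exceptional boundary behaviour of the coefficients (already visible in the indicator terms $\1{\alpha_i=0}$ of \cref{prop:cgn_order_two}, which do not decay under $\delta$), so they escape the crude bound $0\le c_{g',n'}\le V_{g',n'}$ of \cref{cor:c_d_decrease} and must be isolated and estimated separately — for instance by iterating the recursion to extract the missing powers of $\jap{g}$ from the chain of volume ratios, while respecting the restriction $2g_i+n_i>N_i+1$ of \cref{lem:volume_sum_K} (false otherwise, as noted after its statement). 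Organising this bookkeeping uniformly, while keeping the derivatives on large indices and balancing the powers of $\jap{g}$, is where the real work lies; the rest is discrete Leibniz manipulation and repeated use of the volume estimates assembled in \cref{sec:first-estim-rati}.
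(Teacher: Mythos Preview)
Your mechanisms are right --- absorbing one $\delta_1$ by reindexing the $u_i$-sum, invoking \cref{lem:delta_sum_k1_k2} so that the differentiated inner index is always the larger of $k_1,k_2$, discrete Leibniz for the prefactor $2\alpha_j+1$ and the products in (C), and the volume ratios of \cref{sec:first-estim-rati}. But the induction scheme is misidentified, and this is not cosmetic.

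The paper inducts on $N$, not on $|\chi|$. One application of the recursion, together with the $\delta_1$-absorption, lowers $|\mathbf{m}|$ from $2N+1$ to $2N$; the induction hypothesis at rank $N$ --- already established for \emph{all} $(g',n')$ with a constant $C_{n',N}$ independent of $g'$ --- then gives $\jap{\alpha'}^N V_{g',n'}/\jap{g'}^N$, and the ratio $V_{g',n'}=\O[n]{V_{g,n}/\jap{g}}$ supplies the missing $1/\jap{g}$. The extra $\jap{\alpha}$ from the prefactor or the $k$-sum length is matched \emph{exactly} by this extra $1/\jap{g}$, so for (A) and (B) one lands directly on $\jap{\alpha}^{N+1}/\jap{g}^{N+1}$ with no conversion. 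Your induction on $|\chi|$ breaks precisely when $|\mathbf{m}|=2N$: after one step $|\mathbf{m}'|=2N-1$ is still at order $N$, so you must call the hypothesis at the \emph{same} $N$ but at $|\chi|-1$. Each such call multiplies the constant by a factor depending on $n,N$, and there are $|\chi|\sim g$ steps, so the final constant depends on $g$. (One can salvage the scheme by first treating $|\mathbf{m}|=2N-1$ --- where only the rank-$(N-1)$ hypothesis is needed --- and then reaching $|\mathbf{m}|=2N$ by the triangle inequality; but then the induction on $|\chi|$ has become idle and you are effectively inducting on $N$, which is the paper's argument.) Note that \cref{lemm:main_term_calpha} and \cref{lemm:second_order_derivative}, which you cite as models, are \emph{not} inductions on $|\chi|$ either: each applies the recursion once and feeds in a previously proved estimate on the coefficients.

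A second, smaller point: your ``boundary regime'' is not what happens. There are no residual terms with both $k_1,k_2$ bounded; the choice $a_{N+1}\geq 2a_N+2$ ensures, via \cref{lem:delta_sum_k1_k2}, that every differentiated inner index is $\geq a_N$, so the induction hypothesis always applies to the (A)- and (B)-terms. The genuine boundary issue is the restriction $2g_i+n_i>N_i+1$ in \cref{lem:volume_sum_K} for the (C)-term. The paper does not handle this by iterating the recursion; it imposes the further constraint $a_{N+1}\geq 3(2N+1)$ and checks that whenever $2g_i+n_i\leq N_i+1$ the relevant multi-index already exceeds the degree $3g_i-2+n_i$ of $V_{g_i,n_i+1}$, so that coefficient \emph{vanishes}. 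This is a one-line degree count, not a chain of volume ratios.
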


\begin{proof}
  The proof is an induction on the integer $N$. The case $N=0$ is trivial: indeed, by Lemma \ref{cor:c_d_decrease},
  \begin{equation*}
    \forall \alpha \in \N_0^n, \quad \abso{c_\alpha^{(g,n)}} \leq V_{g,n}. 
  \end{equation*}

  In order to be able to use Mirzakhani's recursion formula, we observe that the result is trivial when $2g-2+n=1$, for
  any $N>0$. Indeed,
  \begin{itemize}
  \item if $(g,n) = (0,3)$, then $\delta^{\mathbf{m}} c_{0,3}(\alpha) = 0$ for any $\mathbf{m}, \alpha \in \N_0^3$ such that
    $\alpha \neq \z{3}$
  \item if $(g,n) = (1,1)$, then $\delta^m c_{1,1}(\alpha) = 0$ for any $m \geq 0$ and any $\alpha \geq 2$.
  \end{itemize}
  As a consequence, provided that $a_{N} \geq 2$ for $N \geq 1$, the result is automatic.
  
  For an integer $N \geq 0$, let us assume the result to hold at the rank $N$, and prove it at the rank $N+1$.  Let us
  consider integers $g$, $n$ such that $2g-2+n>1$.
  Let $\mathbf{m}, \alpha \in \N_0^n$ be multi-indices, such that:
  \begin{itemize}
  \item $\abso{\mathbf{m}} = m_1 + \ldots + m_n = 2N+1$
  \item $\forall i, (m_i \neq 0 \Rightarrow \alpha_i \geq a_{N+1})$,
  \end{itemize}
  where $a_{N+1}$ is an integer that will be determined during the proof. By symmetry of the volume coefficients, we can
  assume that $m_1>0$.

  Let us write the coefficient $\delta^{\mathbf{m}}c_\alpha^{(g,n)}$ using Mirzakhani's topological recursion formula,
  \cref{theo:rec_mirz}. We obtain:
  \begin{align*}
    \abso{\delta^{\mathbf{m}} c_{g,n}(\alpha)}
    & \leq \sum_{j=2}^n \abso{\delta^{\mathbf{m}} \mathcal{A}^{(j)}_{g,n}(\alpha)} + \abso{\delta^{\mathbf{m}} \mathcal{B}_{g,n}(\alpha)}
      + \sum_{\iota \in \mathcal{I}_{g,n}} \abso{\delta^{\mathbf{m}} \mathcal{C}^{(\iota)}_{g,n}(\alpha)}\\
      & =: (A) + (B) + (C).
  \end{align*}
  We shall estimate these different contributions successively, keeping in mind that the aim is to establish a decay for
  each term at the rate $\jap{\alpha}^{N+1} \frac{V_{g,n}}{\jap{g}^{N+1}}$.

  \begin{proofstep}[Estimate of the term (A)]
    The term $(A)$ is equal to zero if $n=1$, and then there is nothing to be proved. Otherwise, let
    $j \in \{2, \ldots, n\}$. By \cref{e:rec_other_bound},
    \begin{equation*}
      \mathcal{A}^{(j)}_{g,n}(\alpha)
      = 8 \; (2\alpha_j+1) \sum_{i =0}^{+ \infty}  
      u_{i} \; c_{g,n-1}(\tilde \alpha^{(i)})
    \end{equation*}
    where $\tilde{\alpha}^{(i)} := (i+\alpha_1+\alpha_j-1,\alpha_2,\ldots,\hat{\alpha}_j,\ldots,\alpha_n)$. By a change
    of variable in the sum, if we set $u_{-1}=0$, then
    \begin{equation*}
      \delta_1 \mathcal{A}^{(j)}_{g,n}(\alpha)
      = 8 \; (2\alpha_j+1) \sum_{i =0}^{+ \infty}  
      (u_{i}-u_{i-1}) \; c_{g,n-1}(\tilde \alpha^{(i)}).
    \end{equation*}
    \begin{itemize}
    \item Let us first treat the case when $m_j = 0$. By applying the discrete derivatives $\delta_1^{m_1-1}$ and
      $\delta_i^{m_i}$ for $i \notin \{1, j\}$, we observe that
      \begin{equation*}
        \delta^{\mathbf{m}} \mathcal{A}^{(j)}_{g,n}(\alpha)
        =  8 \; (2\alpha_j+1) \sum_{i =0}^{+ \infty}  
               (u_{i}-u_{i-1}) \; \delta^{\tilde{\mathbf{m}}}
               c_{g,n-1}(\tilde \alpha^{(i)})
      \end{equation*}
      for $\tilde{\mathbf{m}} = (m_1-1, m_2, \ldots, \hat{m}_j, \ldots, m_n)$.  Then, the bound on $u_i-u_{i-1}$
      from \cref{lemm:behaviour_u_i} implies the existence of a universal constant $C>0$ such that
      \begin{equation*}
        \label{eq:delta_m_Aj}
        \abso{\delta^{\mathbf{m}} \mathcal{A}^{(j)}_{g,n}(\alpha)}
        \leq C \jap{\alpha} \sum_{i =0}^{+ \infty}  
        4^{-i} \; \abso{\delta^{\tilde{\mathbf{m}}} c_{g,n-1}(\tilde \alpha^{(i)})}.
      \end{equation*}

      We now want to use the induction hypothesis to bound $\delta^{\tilde{\mathbf{m}}} c_{g,n-1}(\tilde \alpha^{(i)})$,
      for every $i \geq 0$. We observe that $\abso{\tilde{\mathbf{m}}} = \abso{\mathbf{m}}-1 = 2N$, and decide to choose
      the parameter $a_{N+1}$ so that $a_{N+1}>a_N$. Then,
      \begin{equation*}
      i+\alpha_1+\alpha_j-1 \geq \alpha_1 - 1 \geq a_N,
    \end{equation*}
    and the multi-indices $\tilde{\mathbf{m}}$, $\tilde{\alpha}^{(i)}$ therefore satisfy the hypotheses of the theorem
    at the rank $N$. Hence,
      \begin{equation*}
        \abso{\delta^{\tilde{\mathbf{m}}} c_{g,n-1}(\tilde \alpha^{(i)})}
        \leq C_{n-1,N} \, \jap{\tilde{\alpha}^{(i)}}^N\frac{V_{g,n-1}}{\jap{g}^N} \cdot  
      \end{equation*}
      By \cref{e:volume_n_plus_one}, this implies that
      \begin{equation*}
        \delta^{\tilde{\mathbf{m}}} c_{g,n-1}(\tilde \alpha^{(i)})
        = \O[n,N]{\jap{\alpha}^N \jap{i}^N \frac{V_{g,n}}{\jap{g}^{N+1}}},
      \end{equation*}
      from which we deduce that, as soon as $m_j = 0$,
      \begin{align*}
        \abso{\delta^{\mathbf{m}} \mathcal{A}^{(j)}_{g,n}(\alpha)}
        & = \O[n,N]{\jap{\alpha}^{N+1} \frac{V_{g,n}}{\jap{g}^{N+1}} \sum_{i=0}^{+ \infty} 4^{-i}  \jap{i}^N} \\
        & = \O[n,N]{\jap{\alpha}^{N+1} \frac{V_{g,n}}{\jap{g}^{N+1}}},
      \end{align*}
      which is precisely our claim.
      \item Now, if $m_j>0$, we need to be more careful when applying the derivative~$\delta_j$ because of the
        dependance in $\alpha_j$ of $\mathcal{A}_{g,n}^{(j)}(\alpha)$. We prove by a simple induction that
        \begin{align*}
          \delta^{\mathbf{m}} \mathcal{A}^{(j)}_{g,n}(\alpha)
          = \, & 8 \; (2\alpha_j+1) \sum_{i =0}^{+ \infty}  
                 (u_{i}-u_{i-1}) \; \delta^{\tilde{\mathbf{m}}}
                 c_{g,n-1}(\tilde \alpha^{(i)}) \\
               & - 16 \, m_j \sum_{i =0}^{+ \infty}
                 (u_{i}-u_{i-1}) \; \delta^{\hat{\mathbf{m}}}
                 c_{g,n-1}(\tilde \alpha^{(i+1)})
        \end{align*}
        where $\tilde{\mathbf{m}}$ is as before and
        $\hat{\mathbf{m}} := (m_1+m_j-2, m_2, \ldots, \hat{m}_j, \ldots, m_n)$. We observe that
        $\abso{\hat{\mathbf{m}}} = 2N-1$, and this allows us to apply the induction hypothesis to this additional
        term. The same computation as in the case $m_j=0$ leads to the same bound, since $m_j = \O[N]{1}$.
      \end{itemize}
      We sum up the $n-2 = \O[n]{1}$ contributions for $j \in \{2, \ldots, n\}$ and conclude that
      \begin{equation}
        \label{e:A}
        \sum_{j=2}^n \abso{\delta^{\mathbf{m}} \mathcal{A}^{(j)}_{g,n}(\alpha)}
        = \O[n,N]{\jap{\alpha}^{N+1}\frac{V_{g,n}}{\jap{g}^{N+1}}}.
      \end{equation}
    \end{proofstep}
    
    \begin{proofstep}[Estimate of the term (B)]
      Let us first observe that this term only appears whenever $g \geq 1$.
      As in the case (A), we start by writing that by \cref{e:rec_non_sep},
      \begin{equation*}
        \delta_1 \mathcal{B}_{g,n}(\alpha)
        =  16  \sum_{i=0}^{+ \infty} \sum_{k_1+k_2=i+\alpha_1-2}
        (u_i-u_{i-1}) \; c_{g-1,n+1}(\tilde{\alpha}^{(k_1,k_2)})
      \end{equation*}
      where $\tilde{\alpha}^{(k_1,k_2)} = (k_1,k_2,\alpha_2,\ldots,\alpha_n)$.
      However, this time, the dependency on~$\alpha_1$ is more complex, and we need to use \cref{lem:delta_sum_k1_k2}
      to apply the operator $\delta_1^{m_1-1}$ to the equation. We obtain:
      \begin{align}
        \abso{\delta^{\mathbf{m}} \mathcal{B}_{\alpha}^{(g,n)}}
        \leq & \, C  \sum_{i=0}^{+ \infty} \sum_{\substack{k_1+k_2=i+\alpha_1-2 \\ k_1 \geq k_2}}
        4^{-i} \; \abso{\delta_1^{m_1-1} \delta^{\tilde{\mathbf{m}}} c_{g-1,n+1}(\tilde{\alpha}^{(k_1,k_2)})} \label{eq:delta_m_B1} \\
             & + C \sum_{i=0}^{+ \infty} \sum_{\substack{k_1+k_2=i+\alpha_1-2 \\ k_1 < k_2}}
        4^{-i} \; \abso{\delta_2^{m_1-1} \delta^{\tilde{\mathbf{m}}} c_{g-1,n+1}(\tilde{\alpha}^{(k_1,k_2)})}  \label{eq:delta_m_B2} \\ 
             & + C \sum_{\mu_1+ \mu_2= m_1-2} \sum_{i=0}^{+\infty}  4^{-i}
               \abso{\delta_{1}^{\mu_1}\delta_2^{\mu_2} \delta^{\tilde{\mathbf{m}}}
               c_{g-1,n+1}(\tilde{\alpha}^{(\floor{\frac{i+\alpha_1-1}{2}},\floor{\frac{i+\alpha_1}{2}})})},
               \label{eq:delta_m_B3}
      \end{align}
      where $\tilde{\mathbf{m}} = (0,0,m_2, \ldots, m_n) \in \N_0^{n+1}$, and the universal constant $C>0$ comes once
      again from \cref{lemm:behaviour_u_i}.  We estimate each term successively, using the induction hypothesis.
      
      \begin{itemize}
      \item Let us assume that the parameter $a_{N+1}$ is $\geq 2a_N+2$. Then, by hypothesis, $\alpha_1 \geq 2 a_N+2$,
        and therefore for any $i \geq 0$ and any $k_1$, $k_2$ in the $i$-th term of the sum \eqref{eq:delta_m_B1},
        \begin{equation*}
          k_1 \geq \frac{k_1+k_2}{2} = \frac{i + \alpha_1 - 2}{2} \geq a_N.
        \end{equation*}
        We can then apply the induction hypothesis to $(m_1-1, \z{n}) + \tilde{\mathbf{m}}$, of $\ell^1$-norm~$2N$, and
        $\tilde{\alpha}^{(k_1,k_2)}$. This yields
        \begin{align*}
          \abso{\delta_1^{m_1-1}\delta^{\tilde{\mathbf{m}}} c_{g-1,n+1}(\tilde{\alpha}^{(k_1,k_2)})}
          & \leq C_{n+1,N} \, \jap{\tilde \alpha^{(k_1,k_2)}}^N\frac{V_{g-1,n+1}}{\jap{g-1}^N}  \\
          & = \O[n,N]{\jap{\tilde \alpha^{(k_1,k_2)}}^N\frac{V_{g,n}}{\jap{g}^{N+1}}}
        \end{align*}
        since $g \geq 1$, and by \cref{e:volume_n_plus_one,e:volume_same_euler}. We then use the fact that
        \begin{equation}
          \label{eq:sum_u_i_k}
          \sum_{i = 0}^{+ \infty} \sum_{\substack{k_1+k_2=i+\alpha_1-2 \\ k_1 \geq k_2}} 4^{-i} \jap{\tilde \alpha^{(k_1,k_2)}}^N = \O[N]{\jap{\alpha}^{N+1}},
        \end{equation}
        to conclude that if $a_{N+1} \geq 2 a_N+2$, then the term~\eqref{eq:delta_m_B1} is
        \begin{equation*}
          \O[n,N]{\jap{\alpha}^{N+1}\frac{V_{g,n}}{\jap{g}^{N+1}}}.
        \end{equation*}
      \item By symmetry of the coefficients, the term \eqref{eq:delta_m_B2} is equal to
        \begin{equation*}
          \sum_{i=0}^{+ \infty} \sum_{\substack{k_1+k_2 =i+\alpha_1-2 \\ k_2 > k_1}}
          4^{-i} \; \abso{\delta_1^{m_1-1} \delta^{\tilde{\mathbf{m}}} c_{g-1,n+1}(\tilde \alpha^{(k_2,k_1)})}
        \end{equation*}
        is therefore smaller than the term \eqref{eq:delta_m_B1}.
      \item For the term \eqref{eq:delta_m_B3}, we observe that since $\alpha_1 \geq 2 a_N + 2$, for all $i \geq 0$,
        \begin{equation*}
          \floor*{\frac{i + \alpha_1}{2}}
          \geq \floor*{\frac{i + \alpha_1 - 1}{2}}
          \geq \frac{\alpha_1 - 2}{2} \geq a_N.
        \end{equation*}
        Furthermore, for any integers such that $\mu_1+\mu_2=m_1-2$, the norm of
        $(\mu_1, \mu_2, \z{n-1}) + \tilde{\mathbf{m}}$ is equal to $2N-1$, and therefore, by the induction hypothesis,
        \begin{multline*}
          \abso{\delta_{1}^{\mu_1}\delta_2^{\mu_2} \delta^{\tilde{\mathbf{m}}}
            c_{g-1,n+1}(\tilde{\alpha}^{(\floor{\frac{i+\alpha_1-1}{2}},\floor{\frac{i+\alpha_1}{2}})})} \\
          \leq C_{n+1,N} \, \jap{\tilde{\alpha}^{(\floor{\frac{i+\alpha_1-1}{2}},\floor{\frac{i+\alpha_1}{2}})}}^N\frac{V_{g-1,n+1}}{\jap{g-1}^N} ,
    \end{multline*}
    and \eqref{eq:delta_m_B3} hence satisfies the same bound as the other terms.
  \end{itemize}
  As a conclusion, provided that $a_{N+1} \geq 2 a_N+2$, 
  \begin{equation*}
    (B) = \abso{\delta^{\mathbf{m}} \mathcal{B}_{g,n}(\alpha)} = \O[n,N]{\jap{\alpha}^{N+1} \frac{V_{g,n}}{\jap{g}^{N+1}}}.
  \end{equation*}
\end{proofstep}

\begin{proofstep}[Estimate of the term (C)]
  For the term (C), similarly, by \cref{e:rec_sep}, for every configuration $\iota = (g_1,I,J)$ where
  $g_1+g_2 = g$ and $I \sqcup J = \{2, \ldots, n\}$, if we denote $n_1=|I|$ and $n_2=|J|$,
  \begin{equation*}
   \delta_1 \mathcal{C}^{(\iota)}_{g,n}(\alpha)
    =  16 \sum_{i=0}^{+ \infty} \sum_{k_1+k_2=i+\alpha_1-2}
    (u_{i}-u_{i-1}) \;  c_{g_1,n_1+1}(\tilde \alpha_I^{(k_1)})
    \; c_{g_2,n_2+1}(\tilde \alpha_J^{(k_2)})
  \end{equation*}
  where $\tilde \alpha_I^{(k_1)} = (k_1, \alpha_I)$ and $\tilde \alpha_J^{(k_2)} = (k_2, \alpha_J)$. As before, we prove
  that
  \begin{align}
    & \abso{\delta^{\mathbf{m}} \mathcal{C}^{(\iota)}_{g,n}(\alpha)} \\
    &  \leq  C \sum_{i=0}^{+ \infty} \sum_{\substack{k_1+k_2= \\ i+\alpha_1-2 \\ k_1 \geq k_2}}
    4^{-i}  \abso{\delta^{(m_1-1,\mathbf{m}_I)} c_{g_1,n_1+1}(\tilde \alpha_I^{(k_1)})}
    \abso{\delta^{(0,\mathbf{m}_J)} c_{g_2,n_2+1}(\tilde \alpha_J^{(k_2)})} \label{eq:upper_bound_C_1}\\
    & + C \sum_{i=0}^{+ \infty} \sum_{\substack{k_1+k_2  = \\ i+\alpha_1-2 \\ k_1 < k_2}}
    4^{-i}  \abso{\delta^{(0,\mathbf{m}_I)} c_{g_1,n_1+1}(\tilde \alpha_I^{(k_1)})}
    \abso{\delta^{(m_1-1,\mathbf{m}_J)} c_{g_2,n_2+1}(\tilde \alpha_J^{(k_2)})} \\
    & + C \sum_{\substack{\mu_1+\mu_2 \\ = m_1-2 \\ i \geq 0}}
    \abso{\delta^{(\mu_1,\mathbf{m}_I)}c_{g_1,n_1+1}(\tilde \alpha_I^{(\floor{\frac{i+\alpha_1-1}{2}})})}
    \abso{\delta^{(\mu_2,\mathbf{m}_J)}c_{g_2,n_2+1}(\tilde \alpha_I^{(\floor{\frac{i+\alpha_1}{2}})})}. \label{eq:upper_bound_C_3}
  \end{align}
  
  We now estimate the term \eqref{eq:upper_bound_C_1} using the induction hypothesis on the two terms
  $\delta^{(m_1-1,\mathbf{m}_I)} c_{g_1,n_1+1}(\tilde \alpha_I^{(k_1)})$ and
  $\delta^{(0,\mathbf{m}_J)} c_{g_2,n_2+1}(\tilde \alpha_J^{(k_2)})$. Let us set
  \begin{equation*}
    N_1 := \floor*{\frac{m_1+\abso{\mathbf{m}_I}}{2}} \leq N
    \quad \text{and} \quad
    N_2 := \floor*{\frac{\abso{\mathbf{m}_J}+1}{2}} \leq N
  \end{equation*}
  so that $m_1-1 + \abso{\mathbf{m}_I} \in \{2N_1-1,2N_1\}$ and $\abso{\mathbf{m}_J} \in \{2N_2-1,2N_2\}$.
  Then, we observe that under the hypothesis $a_{N+1} \geq 2a_N+2$, for any term in \cref{eq:upper_bound_C_1},
  $k_1 \geq a_N \geq a_{N_1}$. We can therefore apply the induction hypothesis at the rank $N_1$ and obtain
  \begin{equation*}
    \delta^{(m_1-1,\mathbf{m}_I)} c_{g_1,n_1+1}(\tilde \alpha_I^{(k_1)})
    = \O[n_1,N_1]{\jap{\tilde \alpha_I^{(k_1)}}^{N_1} \frac{V_{g_1,n_1+1}}{\jap{g_1}^{N_1}}}.
  \end{equation*}
  We also have that
  \begin{equation*}
    \delta^{(0,\mathbf{m}_J)} c_{g_2,n_2+1}(\tilde \alpha_J^{(k_2)})
    = \O[n_2,N_2]{\jap{\tilde \alpha_J^{(k_2)}}^{N_2}\frac{V_{g_2,n_2+1}}{\jap{g_2}^{N_2}}}
  \end{equation*}
  (note that there is no condition on the index $k_2$ because there is no derivative w.r.t. the first variable in
  $\delta^{(0,\mathbf{m}_J)}$). We obtain by the same method as before that the term~\eqref{eq:upper_bound_C_1} is
  \begin{equation}
    \label{eq:bound_C_1_g1_big}
    \mathcal{O}_{n,N_1,N_2} 
      \paren*{\jap{\alpha}^{N_1+N_2+1} \frac{V_{g_1,n_1+1} V_{g_2,n_2+1}}{\jap{g_1}^{N_1} \jap{g_2}^{N_2}}}.    
  \end{equation}
  
  We then wish to apply \cref{lem:volume_sum_K} in order to bound the sum over all configurations. This lemma implies
  that
  \begin{equation}
    \label{e:appli_lemma_n12}
    \sum_{\substack{\iota \in \mathcal{I}_{g,n} \\ 2 g_i + n_i > N_i+1}} \frac{V_{g_1,n_1+1} V_{g_2,n_2+1}}{\jap{g_1}^{N_1} \jap{g_2}^{N_2}}
    = \O[n,N]{\frac{V_{g,n-1}}{\jap{g}^{N_1+N_2+1}}}
    = \O[n,N]{\frac{V_{g,n}}{\jap{g}^{N+2}}},
  \end{equation}
  by \cref{e:volume_n_plus_one} since $n_1+n_2 = n-1$ for any $\iota \in \mathcal{I}_{g,n}$, and because
  \begin{equation*}
    N_1+N_2
    = \floor*{\frac{m_1+\abso{\mathbf{m}_I}}{2}} + \floor*{\frac{\abso{\mathbf{m}_J}+1}{2}}
    \in \{N,N+1\}.
  \end{equation*}
  
  As a consequence, in order to conclude, we need to be able to restrict the sum over $\iota \in \mathcal{I}_{g,n}$ to
  the configurations such that $2g_i+n_i>N_i+1$ for $i \in \{1, 2\}$. This is achieved by adding a new constraint on the
  parameter~$a_{N+1}$: we assume that $a_{N+1} \geq 3(2N+1)$.  Thanks to this additional hypothesis, we can prove that,
  for all configuration $\iota \in \mathcal{I}_{g,n}$,
  \begin{itemize}
  \item either $2g_i+n_i> N_i+1$ for $i = 1$ and $2$;
  \item or $2g_1 + n_1 \leq N_1+1$, in which case
    $\delta^{(m_1-1, \mathbf{m}_I)} c_{g_1,n_1+1}(\tilde{\alpha}_I^{(k_1)}) = 0$ for any integers $k_1 \geq k_2$ such
    that $k_1+k_2 \geq \alpha_1-2$;
  \item or $2g_2 + n_2 \leq N_2+1$, in which case
    $\delta^{(0, \mathbf{m}_J)} c_{g_2,n_2+1}(\tilde{\alpha}_J^{(k_2)}) = 0$ for any integer $k_2 \geq 0$.
  \end{itemize}
  Provided this claim is proved, we can then say that the sum over all configurations~$\iota$ of the term
  \eqref{eq:upper_bound_C_1} is equal to the sum over all $\iota$ such that $2g_1+n_i>N_i+1$, which then is
  \begin{equation*}
    \O[n,N]{\jap{\alpha}^{N+2} \frac{V_{g,n}}{\jap{g}^{N+2}}}
  \end{equation*}
  by \cref{eq:bound_C_1_g1_big,e:appli_lemma_n12}.  This implies that the sum \eqref{eq:upper_bound_C_1} satisfies the
  claimed estimate for any $\alpha$ such that $\abso{\alpha} \leq 3g-3+n$. Otherwise, because of the degree of
  $V_{g,n}(\x)$, the sum \eqref{eq:upper_bound_C_1} is equal to zero and the estimate trivially holds.

  Let us now prove our claim.
  \begin{itemize}
  \item First, if $2g_1+n_1 \leq N_1+1$, then for any $k_1 \geq k_2$ such that $k_1+k_2\geq\alpha_1-2$, on the one hand, 
    \begin{align*}
      k_1+\abso{\alpha_I}
      & \geq \frac{k_1+k_2}{2} + \abso*{\alpha_I}
      \geq \frac{\alpha_1 + \abso*{\alpha_I}}{2} -1 \\
      & \geq \frac{a_{N+1}}{2} \#\{ i \in \{ 1 \} \cup I \, : \, m_i \neq 0 \} -1
    \end{align*}
    by hypothesis on $\alpha$. On the other hand,
    \begin{equation*}
      \#\{ i \in \{ 1 \} \cup I \, : \, m_i \neq 0 \}
      \geq \frac{m_1 + \abso{\mathbf{m}_I}}{\abso{\mathbf{m}}_\infty}
      \geq \frac{2 N_1}{2N+1} \cdot
    \end{equation*}
    We use the hypothesis $a_{N+1} \geq 3(2N+1)$ to deduce that
    \begin{equation*}
      k_1 + \abso{\alpha_I} \geq 3 N_1 -1 \geq 6g_1+3n_1-4 > 3g_1-3 + (n_1+1),
    \end{equation*}
    because $3g_1+2n_1 > 2$. The latter quantity is the degree of the polynomial $V_{g_1,n_1+1}(\x)$ in the variables
    $x_1^2, \ldots, x_{n_1+1}^2$, and therefore the previous inequality implies that $\delta^{(m_1-1,\mathbf{m}_I)}c_{g_1,n_1+1}(k_1,\alpha_I) = 0$.
  \item Similarly, we prove that for any $k_2 \geq 0$,
    \begin{equation*}
      k_2 + \abso{\alpha_J} \geq  \frac{a_{N+1} \abso{\mathbf{m}_J}}{\abso{\mathbf{m}}_\infty} \geq 3(2N_2-1)
    \end{equation*}
    and therefore if $2g_2+n_2 \leq N_2+1$, then
    \begin{equation*}
      k_2 + \abso{\alpha_J}
      \geq 12g_2+6n_2-9 > 3g_2 -3 + (n_2 +1)
    \end{equation*}
    and hence $\delta^{(0,\mathbf{m}_J)}c_{g_2,n_2+1}(k_2,\alpha_J) = 0$.
  \end{itemize}

  The estimate of the term \eqref{eq:upper_bound_C_3} is the same: we apply the induction hypothesis to
  $\delta^{(\mu_1,\mathbf{m}_I)}c_{g_1,n_1+1}(\tilde \alpha_I^{(\floor{\frac{i+\alpha_1-1}{2}})})$ and
  $\delta^{(\mu_2,\mathbf{m}_J)}c_{g_2,n_2+1}(\tilde \alpha_I^{(\floor{\frac{i+\alpha_1}{2}})})$, at the admissible ranks
  \begin{equation*}
    N_1 := \floor*{\frac{\mu_1 + \abso{\mathbf{m}_I}+1}{2}}
    \quad \text{and} \quad
    N_2 := \floor*{\frac{\mu_2 + \abso{\mathbf{m}_J}+1}{2}}.
  \end{equation*}
  We observe that $N_1+N_2= N$, and this therefore yields the claimed result.
\end{proofstep}
As a conclusion, we have proved that under the hypotheses $a_{N+1} \geq 2 a_N+2$ and $a_{N+1} \geq 3(2N+1)$, for any
multi-index $\mathbf{m}$ of norm $\abso{\mathbf{m}}=2N+1$ and any multi-index $\alpha$ such that
$\forall i, (m_i \neq 0 \Rightarrow \alpha_i \geq a_{N+1})$,
\begin{equation*}
  \abso{\delta^{\mathbf{m}} c_{g,n}(\alpha)} \leq (A) + (B) + (C)
  \leq C_{n,N+1} \, \jap{\alpha}^{N+1} \frac{V_{g,n}}{\jap{g}^{N+1}}\cdot
\end{equation*}
This implies the result for any multi-index $\mathbf{m}$ of norm $2N+2$ too, simply because for any sequence
$(v(\alpha))_\alpha$, if $m_1>0$ for instance, then for all $\alpha$,
\begin{equation*}
  \abso{\delta^{\mathbf{m}} v(\alpha)} \leq \abso{\delta^{(m_1-1, m_2, \ldots, m_n)}v(\alpha)}
  + \abso{\delta^{(m_1-1, m_2, \ldots, m_n)}v(\alpha_1+1, \alpha_2, \ldots, \alpha_n)}.
\end{equation*}
This concludes the induction. 
\end{proof}

\section{Proof of \cref{theo:volume_asympt_exp}}
\label{sec:funct-ultim-polyn}

\subsection{Discrete Taylor expansion}

\cref{thm:derivative_coeff} states that the function $\alpha \mapsto c_{g,n}(\alpha)$ has small derivatives for large
enough values of $\alpha$. Had we proved that the derivatives are small for \emph{any}~$\alpha$, we could have used a
discrete version of the Taylor--Lagrange formula, such as the one below, to conclude that
$\alpha \mapsto c_{g,n}(\alpha)$ is well-approximated by polynomial functions.

\begin{lem}[Discrete Taylor--Lagrange formula]
  \label{lemm:taylor_zero}
  Let $n \geq 1$ and $f : \N_0^n \rightarrow \R$. We assume that there exist a real number $M \geq 0$ and integers $K, p \geq 0$ such
  that, for any multi-index $\mathbf{m}$ of norm $\abso{\mathbf{m}} = K+1$,
  \begin{equation*}
    \forall \alpha \in \N_0^n, \quad \abso{\delta^{\mathbf{m}} f (\alpha)} \leq M \jap{\alpha}^p.
  \end{equation*}
  Then, there exists a polynomial function $\tilde{f}^{(K)} : \N_0^n \rightarrow \R$ of degree at most $K$ such that
  \begin{equation*}
    \forall \alpha \in \N_0^n, \quad
    \abso{f(\alpha) - \tilde{f}^{(K)} (\alpha)} \leq M n^{K+1} \jap{\alpha}^{p+K+1}.
  \end{equation*}
  Furthermore, the coefficients of the polynomial function $\tilde{f}^{(K)}$ can be expressed as linear combinations of the
  derivatives $\delta^{\mathbf{m}}f(\z{n})$, for multi-indices $\mathbf{m} \in \N_0^n$ of norm $\abso{\mathbf{m}} \leq K$.
\end{lem}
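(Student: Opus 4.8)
The plan is to reduce everything to the one-variable case and then induct on the number of variables $n$, the total-degree truncation being handled automatically by peeling off one coordinate at a time.

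First I would establish a one-dimensional discrete Taylor formula with an explicit summation remainder. Writing $\delta$ for the single-variable operator $\delta g(\alpha) = g(\alpha) - g(\alpha+1)$, I claim that for every $g : \N_0 \to \R$, every $K \geq 0$ and every $\alpha \geq 0$,
\[
  g(\alpha) = \sum_{j=0}^{K} (-1)^j \binom{\alpha}{j}\, \delta^j g(0)
  + (-1)^{K+1}\sum_{k=0}^{\alpha-1} \binom{\alpha-1-k}{K}\, \delta^{K+1} g(k),
\]
with the convention $\binom{m}{K}=0$ for $0 \leq m < K$. The case $K=0$ is exactly the discrete fundamental theorem of calculus, i.e. the $n=1$ instance of \cref{lemm:discrete_integral}. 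The induction step $K \to K+1$ follows by applying that same identity to the function $k \mapsto \delta^{K+1}g(k)$ inside the remainder and using the hockey-stick identity $\sum_{m=0}^{\alpha-1}\binom{m}{K} = \binom{\alpha}{K+1}$ twice: once to produce the new polynomial coefficient $(-1)^{K+1}\binom{\alpha}{K+1}\delta^{K+1}g(0)$, and once, after swapping the order of summation, to reshape the resulting double sum into the order-$(K+1)$ remainder. The polynomial part is then a polynomial of degree $\leq K$ whose coefficients are linear combinations of the $\delta^j g(0)$, $0 \leq j \leq K$; and since $\binom{\alpha-1-k}{K} \leq \jap{\alpha}^K$, there are at most $\jap{\alpha}$ nonzero terms, and $\abso{\delta^{K+1}g(k)} \leq M\jap{k}^p \leq M\jap{\alpha}^p$ under the hypothesis, the remainder is bounded by $M\jap{\alpha}^{p+K+1}$.

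For the general case I would induct on $n$, the base case $n=1$ being the paragraph above. Given $f:\N_0^n\to\R$ satisfying the hypothesis, I apply the one-dimensional formula in the last variable to order $K$, treating $\alpha' := (\alpha_1,\dots,\alpha_{n-1})$ as frozen. This writes $f(\alpha',\alpha_n)$ as $\sum_{j=0}^{K}(-1)^j\binom{\alpha_n}{j}\,g_j(\alpha')$ plus a remainder bounded by $M\jap{\alpha}^{p+K+1}$, where $g_j(\alpha') := \delta_n^j f(\alpha',0)$. The crucial observation is that, for a multi-index $\mathbf{m}'\in\N_0^{n-1}$ of norm $K-j+1$, one has $\delta^{\mathbf{m}'}g_j(\alpha') = \delta^{(\mathbf{m}',j)}f(\alpha',0)$, a discrete derivative of $f$ of total order $(K-j+1)+j = K+1$; hence $\abso{\delta^{\mathbf{m}'}g_j(\alpha')} \leq M\jap{\alpha'}^p$. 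Thus $g_j$ satisfies the hypothesis with the same $M$, the same $p$, and order $K-j$ in $n-1$ variables, and the inductive hypothesis furnishes a polynomial $\tilde{g}_j$ of degree $\leq K-j$ with error at most $M(n-1)^{K-j+1}\jap{\alpha'}^{p+K-j+1}$. Setting $\tilde{f}^{(K)}(\alpha) := \sum_{j=0}^K (-1)^j\binom{\alpha_n}{j}\tilde{g}_j(\alpha')$ produces a polynomial whose every monomial has total degree $\leq j+(K-j)=K$, and whose coefficients are linear combinations of the $\delta^{(\mathbf{m}',j)}f(\z{n})$ with $\abso{\mathbf{m}'}+j \leq K$, as required.

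It remains to collect the error and verify the constant. Bounding $\binom{\alpha_n}{j}\leq\jap{\alpha}^j$ and $\jap{\alpha'}^{p+K-j+1}\leq\jap{\alpha}^{p+K-j+1}$, the total error is at most $M\jap{\alpha}^{p+K+1}\bigl(1 + \sum_{j=0}^K (n-1)^{K-j+1}\bigr)$, and the elementary inequality $\sum_{\ell=1}^{K+1}(n-1)^\ell \leq n^{K+1}-1$, which follows termwise from $(n-1)^\ell \leq n^\ell$ after comparing the geometric sums $n^{K+1}-1 = (n-1)\sum_{\ell=0}^{K}n^\ell$ and $\sum_{\ell=1}^{K+1}(n-1)^\ell = (n-1)\sum_{\ell=0}^{K}(n-1)^\ell$, yields exactly the claimed constant $n^{K+1}$. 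I expect the only genuinely delicate points to be organizational: getting the signs and the summation ranges right in the one-dimensional formula, and checking that freezing $\alpha_n$ and differentiating in the remaining variables lands on derivatives of $f$ of total order \emph{exactly} $K+1$, so that the inductive hypothesis applies at the reduced order $K-j$ with an unchanged $M$ and $p$. No new analytic input is needed beyond \cref{lemm:discrete_integral} and the hockey-stick identity.
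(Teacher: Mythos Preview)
Your argument is correct and complete, but it takes a genuinely different route from the paper. The paper inducts on $K$ with $n$ fixed: the base case $K=0$ is a direct application of \cref{lemm:discrete_integral} to bound $\abso{f(\alpha)-f(\z{n})}$, and the inductive step applies the rank-$(K-1)$ result to each $\delta_i f$ (which satisfies the hypothesis at that rank), then reconstructs $\tilde f^{(K)}$ by discrete anti-differentiation of the resulting polynomials $\tilde f_i^{(K-1)}$ along the telescoping path of \cref{lemm:discrete_integral}. You instead induct on $n$: first an exact one-variable Newton expansion with binomial weights and hockey-stick remainder, then peel off $\alpha_n$ to reduce to $n-1$ variables at the lower orders $K-j$. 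Your approach yields a more explicit closed form for $\tilde f^{(K)}$ and makes the degree count transparent via $j+(K-j)=K$; the paper's approach is shorter in that it never writes down an explicit binomial remainder or invokes the hockey-stick identity, relying only on repeated integration. Both routes produce the same constant $n^{K+1}$ through essentially the same geometric-series comparison.
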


\begin{proof}
  We proceed by induction on the integer $K$.

  For $K=0$, we observe that by \cref{lemm:discrete_integral}, for all $\alpha$,
  \begin{equation*}
    \abso{f(\alpha) - f(\z{n})} 
    \leq \sum_{i=1}^n \sum_{k=0}^{\alpha_i-1} \abso{\delta_i f(\z{i-1},k,\alpha_{i+1}, \ldots, \alpha_n)}
    \leq M n \jap{\alpha}^{p+1},
  \end{equation*}
  so the result holds if we take $\tilde{f}^{(0)}$ to be the constant function equal to $f(\z{n})$.
  
  Let us now assume the result at a rank $K-1$ for a $K \geq 1$, and deduce the result at the rank $K$.  For any integer
  $i \in \{1, \ldots, n\}$, the function $\delta_i f$ satisfies the induction hypothesis at the rank $K-1$. Hence, there
  exists a polynomial function $\tilde{f}_i^{(K-1)}$ of degree at most $K-1$, and whose coefficients can be expressed as
  linear combinations of the $\delta^{\mathbf{m}} \delta_i f(\z{n})$ for $\abso{\mathbf{m}} \leq K-1$, such that
  \begin{equation*}
    \forall \alpha \in \N_0^n, \quad
    \abso{\delta_i f(\alpha) - \tilde{f}_i^{(K-1)}(\alpha)}
    \leq M n^K \jap{\alpha}^{p+K}.
  \end{equation*}
  Inspired by the discrete integration formula (\cref{lemm:discrete_integral}), we define
  \begin{equation*}
    \tilde{f}^{(K)}(\alpha) := f(\z{n}) - \sum_{i=1}^n \sum_{k=0}^{\alpha_i-1} \tilde{f}_i^{(K-1)}(\z{i-1}, k, \alpha_{i+1}, \ldots, \alpha_n).
  \end{equation*}
  We notice that $\tilde{f}^{(K)}$ is a polynomial of degree at most $K$, and its coefficients are linear combinations
  of $f(\z{n})$ and the coefficients of $(\tilde{f}_i)_i$, and therefore linear combinations of the
  $\delta^{\mathbf{m}}f(\z{n})$ for $\abso{\mathbf{m}} \leq K$. By \cref{lemm:discrete_integral}, for any multi-index
  $\alpha \in \N_0^n$,
  \begin{align*}
    & \abso{f(\alpha) - \tilde{f}^{(K)}(\alpha)} \\
    & \leq \sum_{i=1}^n \sum_{k=0}^{\alpha_i-1} \abso{\delta_if(\z{i-1}, k, \alpha_{i+1}, \ldots, \alpha_n)
      - \tilde{f}_i^{(K-1)}(\z{i-1}, k, \alpha_{i+1}, \ldots, \alpha_n)}\\
    & \leq M n^{K+1} \jap{\alpha}^{p+K+1},
  \end{align*}
  and the conclusion follows.
\end{proof}

However, we can expect from the second-order approximation, \cref{prop:cgn_order_two}, that the function
$\alpha \in \N_0^n \mapsto c_{g,n}(\alpha)$ is \emph{not} well-approximated by polynomial functions, but rather by a
combination of polynomial functions and indicator functions, correcting the values of the function for small
$\alpha$. The aim of the following section is to define such a class of functions, and prove a shifted Taylor--Lagrange
estimate in this new setting.

\subsection{Functions ultimately polynomial in each variable}

\begin{lem}[and Definition]
  For any integers $n \geq 1$ and $K, a \geq 0$, the two following families of functions  $\N_0^n \rightarrow \R$, 
  \begin{itemize}
  \item functions of the form
    \begin{equation*}
      \alpha \mapsto \prod_{i \in I} \alpha_{i}^{k_i} \, \prod_{i \notin I} \1{\alpha_i = \beta_i}
    \end{equation*}
    where $I \subseteq \{1, \ldots, n\}$, $\mathbf{k} = (k_i)_{i \in I}$ is a multi-index of norm
    $\abso{\mathbf{k}} \leq K$, and $\beta = (\beta_i)_{i \notin I}$ satisfies $\abso{\beta}_\infty < a$;
  \item functions of the form
    \begin{equation*}
      \alpha \mapsto \prod_{i \in I} \alpha_i^{k_i} \1{\alpha_i \geq a} \, \prod_{i \notin I} \1{\alpha_i = \beta_i}
    \end{equation*}
    where $I, \mathbf{k}$ and $\beta$ are defined the same way as in the first point;
  \end{itemize}
  generate the same linear subspace of the space of functions $\N_0^n \rightarrow \R$.  We denote this space as
  $\mathcal{P}_{n,K,a}$, and call its elements \emph{polynomials (of degree at most $K$) in each variable greater than
    $a$}.
\end{lem}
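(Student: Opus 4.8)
The plan is to prove the two spanning inclusions from a single elementary observation: for each coordinate $i$ and each exponent $k \geq 0$, the monomial $\alpha_i^k$ and its truncation $\alpha_i^k \1{\alpha_i \geq a}$ differ only by a linear combination of the indicators $\1{\alpha_i = b}$ with $b < a$. Indeed, since $1 = \1{\alpha_i \geq a} + \sum_{b=0}^{a-1} \1{\alpha_i = b}$ and $\alpha_i^k \1{\alpha_i = b} = b^k \1{\alpha_i = b}$, one has the two-way identity
\begin{equation*}
  \alpha_i^k = \alpha_i^k \1{\alpha_i \geq a} + \sum_{b=0}^{a-1} b^k \1{\alpha_i = b}.
\end{equation*}
This is the only computation involved; everything else is bookkeeping on the index sets.

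For the inclusion of the first family into the span of the second, I would start from a generator $\prod_{i \in I} \alpha_i^{k_i} \prod_{i \notin I} \1{\alpha_i = \beta_i}$ and substitute the identity above for each factor with $i \in I$. Distributing the resulting product over $i \in I$ expresses the generator as a linear combination of terms indexed by a subset $J \subseteq I$ (the coordinates on which the truncated factor $\alpha_i^{k_i} \1{\alpha_i \geq a}$ is selected) together with a choice of $b_i \in \{0, \ldots, a-1\}$ for each $i \in I \setminus J$. Each such term equals, up to the scalar $\prod_{i \in I \setminus J} b_i^{k_i}$, a generator of the second family: its truncated-polynomial index set is $J$, with exponents $(k_i)_{i \in J}$ of norm $\leq K$, while its indicator coordinates are $(I \setminus J) \cup (\{1, \ldots, n\} \setminus I)$, with prescribed values all strictly below $a$. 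The reverse inclusion is symmetric: rewriting each $\alpha_i^{k_i} \1{\alpha_i \geq a}$ as $\alpha_i^{k_i} - \sum_{b=0}^{a-1} b^{k_i} \1{\alpha_i = b}$ and distributing produces, by the same reasoning, a linear combination of first-family generators.

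There is no genuine obstacle beyond checking that the generators produced after distribution satisfy the defining constraints of the target family, namely that the retained exponents still have $\ell^1$-norm at most $K$ (immediate, since distribution only drops coordinates out of $I$) and that every newly created indicator value lies in $\{0, \ldots, a-1\}$, so that $\abso{\beta}_\infty < a$ (immediate from the range of $b$ in the identity). Because the product over $i \in I$ factorises, no induction on $\abso{I}$ is needed: a single distribution step converts all coordinates simultaneously. Establishing both inclusions shows the two families generate the same subspace $\mathcal{P}_{n,K,a}$.
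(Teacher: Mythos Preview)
Your proof is correct and follows essentially the same approach as the paper: the paper's own proof consists of the single identity $1 = \1{\alpha_i \geq a} + \sum_{\beta=0}^{a-1} \1{\alpha_i = \beta}$ and leaves the bookkeeping implicit, whereas you have spelled out the distribution over $i \in I$ and the verification of the constraints on $\mathbf{k}$ and $\beta$. No difference in strategy, only in level of detail.
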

\begin{proof}
  The equivalence of these two definitions comes from the simple observation that for any integers $a,\alpha \geq 0$,
  \begin{equation*}
    1 = \1{\alpha \geq a} + \sum_{\beta=0}^{a-1} \1{\alpha = \beta}.
  \end{equation*}
\end{proof}

Then, elements of $\mathcal{P}_{n,K,a}$ are exactly the kind of functions we imagine the coefficients
$\alpha \mapsto c_{g,n}(\alpha)$ to be well-approximated by: since the derivatives vanish for large enough $\alpha$,
beyond a few small values, the functions are approximated by polynomials.

\subsection{Shifted discrete Taylor expansion}

Let us prove the following shifted Taylor--Lagrange lemma.

\begin{lem}
  \label{lemm:taylor_shift}
  Let $n \geq 1$ be an integer and $f : \N_0^n \rightarrow \R$. We assume that there exists a real number $M \geq 0$ and
  integers $K, a, p \geq 0$ satisfying the following. For any multi-indices $\mathbf{m}, \alpha \in \N_0^n$ such that:
  \begin{itemize}
  \item $\abso{\mathbf{m}} = K+1$,
  \item $\forall i, (m_i \neq 0 \Rightarrow \alpha_i \geq a)$,
  \end{itemize}
  we have
  \begin{equation*}
    \abso{\delta^{\mathbf{m}} f (\alpha)} \leq M \jap{\alpha}^p.
  \end{equation*}
  Then, there exists a function $\tilde{f}^{(K)} \in \mathcal{P}_{n,K,a}$ such that
  \begin{equation*}
    \forall \alpha \in \N_0^n, \quad
    \abso{f(\alpha) - \tilde{f}^{(K)} (\alpha)} \leq C_{n,a,p,K}M \jap{\alpha}^{p+K+1}
  \end{equation*}
  where $C_{n,a,p,K} = 2^{\frac{p}{2}+n} a^{n} \jap{2na}^p  n^{K+1}$.
  The coefficients of $\tilde{f}^{(K)}$ can be expressed as linear combinations of the values $\delta^{\mathbf{m}}f(\alpha)$
  for multi-indices $\alpha, \mathbf{m} \in \N_0^n$ such that $\abso{\alpha}_\infty \leq a$ and $\abso{\mathbf{m}}
  \leq K$.
\end{lem}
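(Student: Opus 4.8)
The plan is to reduce this shifted estimate to the ordinary discrete Taylor--Lagrange formula, \cref{lemm:taylor_zero}, which I may use as a black box. The key point is that the hypothesis only fails near the boundary, where some $\alpha_i < a$; so I would decompose $\N_0^n$ according to \emph{which} coordinates are large. For a subset $I \subseteq \acc{1, \ldots, n}$ and a tuple $\beta = (\beta_i)_{i \notin I}$ with $\abso{\beta}_\infty < a$, set
\[
  C_{I,\beta} := \acc*{\alpha \in \N_0^n : \alpha_i \geq a \ (i \in I), \ \alpha_i = \beta_i \ (i \notin I)}.
\]
Each $\alpha \in \N_0^n$ lies in exactly one cell, namely the one with $I = \acc{i : \alpha_i \geq a}$ and $\beta = (\alpha_i)_{i \notin I}$, so the $C_{I,\beta}$ partition $\N_0^n$. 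On each cell I would approximate $f$ by a polynomial of degree $\leq K$ in the variables $(\alpha_i)_{i \in I}$, and glue these together with the indicator functions $\1{\alpha \in C_{I,\beta}}$ --- which is exactly the form of a generator of $\mathcal{P}_{n,K,a}$.

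To build the polynomial on a fixed cell, I would freeze the small coordinates at $\beta_i$ and shift the large ones by $a$: define $g_{I,\beta}$ on the $\abso{I}$ variables $(\gamma_i)_{i \in I}$ by $g_{I,\beta}((\gamma_i)_{i \in I}) := f(\alpha)$, where $\alpha_i = \gamma_i + a$ for $i \in I$ and $\alpha_i = \beta_i$ for $i \notin I$. Since finite differences commute with this shift, $\delta^{\mathbf{m}'} g_{I,\beta}(\gamma) = \delta^{\mathbf{m}} f(\alpha)$ for the corresponding $\mathbf{m}$ (extended by zeros off $I$), and every differentiated coordinate satisfies $\alpha_i = \gamma_i + a \geq a$; hence the shifted hypothesis supplies the bound with \emph{no} restriction on $\gamma \in \N_0^{I}$. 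The one technical point is transferring the weight $\jap{\alpha}^p$ to a weight in $\gamma$: since the frozen coordinates and the shift together add at most $2na$ to the $\ell^1$-norm, the elementary inequality $\jap{s+t} \leq \sqrt{2}\,\jap{s}\jap{t}$ (valid for $s,t \geq 0$) yields $\jap{\alpha}^p \leq 2^{p/2}\jap{2na}^p \jap{\gamma}^p$. Thus $g_{I,\beta}$ meets the hypothesis of \cref{lemm:taylor_zero} in $\abso{I}$ variables with constant $M' := 2^{p/2}\jap{2na}^p M$ (the case $\abso{I}=0$ being the trivial single point $C_{\emptyset,\beta} = \acc{\beta}$), producing a polynomial $\tilde{g}_{I,\beta}^{(K)}$ of degree $\leq K$ with $\abso{g_{I,\beta}(\gamma) - \tilde{g}_{I,\beta}^{(K)}(\gamma)} \leq M' n^{K+1}\jap{\gamma}^{p+K+1}$. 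Shifting back, $P_{I,\beta}((\alpha_i)_{i \in I}) := \tilde{g}_{I,\beta}^{(K)}((\alpha_i - a)_{i \in I})$ approximates $f$ on $C_{I,\beta}$.

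Finally I would assemble $\tilde{f}^{(K)}(\alpha) := \sum_{I,\beta} P_{I,\beta}((\alpha_i)_{i \in I}) \, \1{\alpha \in C_{I,\beta}}$. Expanding each $P_{I,\beta}$ into monomials $\prod_{i \in I}\alpha_i^{k_i}$ with $\abso{\mathbf{k}} \leq K$ and writing $\1{\alpha \in C_{I,\beta}} = \prod_{i \in I}\1{\alpha_i \geq a}\prod_{i \notin I}\1{\alpha_i = \beta_i}$ displays $\tilde{f}^{(K)}$ as a linear combination of generators of $\mathcal{P}_{n,K,a}$, so $\tilde{f}^{(K)} \in \mathcal{P}_{n,K,a}$. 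Since the cells partition $\N_0^n$, the error is controlled cell by cell: for $\alpha \in C_{I,\beta}$ with $\gamma = (\alpha_i - a)_{i \in I}$ one has $\abso{\gamma} \leq \abso{\alpha}$, so $\abso{f(\alpha) - \tilde{f}^{(K)}(\alpha)} \leq M' n^{K+1}\jap{\alpha}^{p+K+1} \leq C_{n,a,p,K}\,M\,\jap{\alpha}^{p+K+1}$, as $C_{n,a,p,K} = 2^{p/2+n}a^n\jap{2na}^p n^{K+1} \geq 2^{p/2}\jap{2na}^p n^{K+1}$. The coefficient claim is inherited from \cref{lemm:taylor_zero}: the coefficients of $\tilde{g}_{I,\beta}^{(K)}$ are linear combinations of $\delta^{\mathbf{m}'} g_{I,\beta}(\z{\abso{I}}) = \delta^{\mathbf{m}} f(\alpha^\ast)$ with $\abso{\mathbf{m}} \leq K$, where $\alpha^\ast_i = a$ for $i \in I$ and $\alpha^\ast_i = \beta_i$ for $i \notin I$, so that $\abso{\alpha^\ast}_\infty \leq a$. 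The only genuine obstacle is bookkeeping: choosing the decomposition by ``large'' coordinates and tracking the $\jap{\cdot}$-weights through the shift; once set up, the gluing, the membership in $\mathcal{P}_{n,K,a}$, and the coefficient count are all routine.
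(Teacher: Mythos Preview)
Your proof is correct and follows essentially the same approach as the paper: partition $\N_0^n$ into cells $C_{I,\beta}$, shift the large coordinates by $a$ to reduce to \cref{lemm:taylor_zero} on each cell, and glue the resulting polynomials with the indicator functions of the cells. Your observation that the cells form a genuine partition (so the error is bounded cell by cell rather than summed) actually yields a slightly sharper constant than the one the paper records, which you then correctly note is dominated by the stated $C_{n,a,p,K}$.
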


\begin{proof}
  The idea is to decompose $\N_0^n$ into subsets on which all of the variables are greater than $a$. More precisely,
  we notice that
  \begin{equation*}
    1 = \sum_{I \subset \{1, \ldots, n\}} \sum_{\substack{(\beta_i)_{i \notin I} \\ \abso{\beta}_\infty < a}}
    \prod_{i \in I} \1{\alpha_i \geq a} \prod_{i \notin I} \1{\alpha_i = \beta_i}.
  \end{equation*}
  Then, we can rewrite the function $f$ as
  \begin{equation}
    \label{e:f_shift}
    f(\alpha) = \sum_{\substack{I \subset \{1, \ldots, n\} \\ = \{i_1 < \ldots < i_r\}}}
      \sum_{\substack{(\beta_i)_{i \notin I} \\ \abso{\beta}_\infty < a}}
    g_{I,\beta}(\alpha_{i_1} - a, \ldots, \alpha_{i_r} - a) \prod_{i \in I} \1{\alpha_i \geq a} \prod_{i \notin I} \1{\alpha_i = \beta_i}
  \end{equation}
  where $g_{I,\beta} : \N_0^r \rightarrow \R$ is defined by setting, for $\hat{\alpha} \in \N_0^r$,
  $g_{I,\beta}(\hat{\alpha}) := f(\alpha)$ where
  \begin{equation*}
    \forall i, \alpha_i :=
    \begin{cases}
      \hat{\alpha}_{k} + a & \text{if } i = i_k \text{ for a } k \in \{1, \ldots, r\}\\
      \beta_i & \text{if } i \notin I.
    \end{cases}
  \end{equation*}

  We wish to apply \cref{lemm:taylor_zero} to the function $g_{I,\beta}$. In order to do so, we need to prove
  an estimate on the derivatives $\delta^{\hat{\mathbf{m}}} g_{I,\beta}(\hat \alpha)$ for any
  $\hat{\mathbf{m}}, \hat \alpha \in \N_0^r$ such that $\abso{\hat{\mathbf{m}}} = K+1$. This will follow from the hypothesis on the function $f$.

  Indeed, we observe that, to any multi-index $\hat{\mathbf{m}} \in \N_0^r$ of norm $K+1$, we can associate a
  multi-index $\mathbf{m} \in \N_0^n$ also of norm $K+1$ by setting
  \begin{equation*}
    \forall i, m_i :=
    \begin{cases}
      \hat{m}_k & \text{if } i = i_k \text{ for a } k \in \{1, \ldots, r\}\\
      0 & \text{if } i \notin I.
    \end{cases}
  \end{equation*}
  Then, for any multi-indices $\hat{\mathbf{m}}, \hat{\alpha} \in \N_0^r$, the corresponding multi-indices $\mathbf{m}, \alpha$ automatically satisfy:
  \begin{equation*}
  \forall i, (m_i\neq 0 \Rightarrow i \in I \Rightarrow \alpha_i \geq a),
\end{equation*}
  and therefore, by hypothesis on $f$,
  \begin{equation*}
    \abso{\delta^{\hat{\mathbf{m}}} g_{I, \beta}(\hat{\alpha})}
    = \abso{\delta^{\mathbf{m}} f(\alpha)}
    \leq M \jap{\alpha}^p
    \leq M 2^{\frac{p}{2}} \jap{2na}^p \jap{\hat{\alpha}}^p
  \end{equation*}
  because 
  $\abso{\alpha} = \abso{\hat{\alpha}} + ra + \abso{\beta} \leq 2n a + \abso{\hat{\alpha}}$
  and for any $x, y$, $\jap{x+y} \leq \sqrt{2} \jap{x} \jap{y}$.

  We can therefore apply \cref{lemm:taylor_zero} to $g_{I,\beta}$, and deduce the existence of a polynomial
  $\tilde g_{I,\beta}^{(K)}$ in $r$ variables, of degree at most $K$, such that
  \begin{equation}
    \label{e:induc_f_beta_I}
    \forall \hat{\alpha} \in \N_0^r, \quad
    \abso{g_{I,\beta}(\hat{\alpha}) - \tilde g_{I,\beta}^{(K)}(\hat{\alpha})} \leq M 2^{\frac{p}{2}} \jap{2na}^p n^{K+1} \jap{\hat{\alpha}}^{p+K+1}. 
  \end{equation}

  Let us now define an element $\tilde{f}^{(K)}$ of $\mathcal{P}_{n,K,a}$ by the formula
  \begin{equation}
    \label{e:f_tilde_shift}
    \tilde{f}^{(K)}(\alpha)
    := \sum_{\substack{I \subset \{1, \ldots, n\} \\ = \{i_1 < \ldots < i_r\}}}
      \sum_{\substack{(\beta_i)_{i \notin I} \\ \abso{\beta}_\infty < a}}
    \tilde g_{I,\beta}^{(K)}(\alpha_{i_1} - a, \ldots, \alpha_{i_r} - a) \prod_{i \in I} \1{\alpha_i \geq a} \prod_{i \notin I}
    \1{\alpha_i = \beta_i}. 
  \end{equation}
  By \cref{e:f_shift,e:f_tilde_shift} together with the bound \eqref{e:induc_f_beta_I}, for any $\alpha \in \N_0^n$,
  \begin{equation*}
    \abso{f(\alpha) - \tilde{f}^{(K)}(\alpha)}
    \leq M 2^{\frac{p}{2}+n} a^{n} \jap{2na}^p  n^{K+1} \jap{\alpha}^{p+K+1}
  \end{equation*}
  because there are $2^n$ terms in the sum over the $I \subseteq \{1, \ldots, n\}$, and always less than~$a^n$ possible
  choices for $\beta$. This is the claimed inequality.

  The coefficients of $\tilde{f}^{(K)}$ are linear combinations of the coefficients of the $\tilde g_{I, \beta}^{(K)}$. By
  \cref{lemm:taylor_zero}, these are themselves linear combinations of the values
  $\delta^{\hat{\mathbf{m}}} g_{I, \beta}(\z{r})$ for multi-indices $\hat{\mathbf{m}}$ of norm
  $\abso{\hat{\mathbf{m}}} \leq K$. By definition of $g_{I,\beta}$, these derivatives are derivatives of the form
  $\delta^{\mathbf{m}}f(\alpha)$ for multi-indices $\mathbf{m}, \alpha$ such that $\abso{\alpha}_\infty \leq a$ and
  $\abso{\mathbf{m}} \leq K$.
\end{proof}

\subsection{Proof of \cref{theo:volume_asympt_exp}}

We can now conclude with the proof of the asymptotic expansion,
\cref{theo:volume_asympt_exp}.

\begin{proof}
  Let $g \geq 0$, $n \geq 1$ be integers such that $2g-2+n>0$. Let $N \geq 0$ be a fixed
  order. By \cref{thm:derivative_coeff}, there exists constants
  $C_{n,N+1}, a_{N+1}$ such that
  \begin{equation*}
    \abso{\delta^{\mathbf{m}} c_{g,n}(\alpha)} \leq C_{n,N+1} \,  \jap{\alpha}^{N+1} \frac{V_{g,n}}{\jap{g}^{N+1}} 
  \end{equation*}
  for any multi-indices $\mathbf{m}, \alpha \in \N_0^n$ such that
  \begin{equation*}
    \abso{\mathbf{m}} = 2N+1 \quad \text{and} \quad
    \forall i, (m_i \neq 0 \Rightarrow \alpha_i \geq a_{N+1}).
  \end{equation*}
  This is exactly the hypothesis of \cref{lemm:taylor_shift}, for the
  parameters $K:=2N$, $p:=N+1$, $a := a_{N+1}$ and
  $M := C_{n,N+1} V_{g,n} / \jap{g}^{N+1}$. As a consequence, there exists an
  element $\tilde{c}_{g,n}^{(K)}$ of $\mathcal{P}_{n,K,a}$ such that for
  all $\alpha \in \N_0^n$,
  \begin{equation*}
    \abso{c_{g,n}(\alpha) - \tilde{c}_{g,n}^{(K)}(\alpha)} \leq C_{n,a,p,K} \, M \jap{\alpha}^{p+K+1}
  \end{equation*}
  or, in other words,
  \begin{equation}
    \label{e:c_gn_approx_K}
    c_{g,n}(\alpha) = \tilde{c}_{g,n}^{(2N)}(\alpha)
    + \O[n,N]{\jap{\alpha}^{3N+2} \frac{V_{g,n}}{\jap{g}^{N+1}}}.
  \end{equation}
  
  Let us now define, for all $\x \in \R_{\geq 0}^n$, a good candidate for the approximating function,
  \begin{equation*}
    F_{g,n}^{(N)}(\x)
    := \frac{1}{V_{g,n}} \sum_{\alpha \in \N_0^n} \tilde{c}^{(2N)}_{g,n}(\alpha) \prod_{i=1}^n \frac{x_i^{2 \alpha_i}}{2^{2 \alpha_i} (2
      \alpha_i+1)!} \cdot
  \end{equation*}
  Then, by \cref{e:c_gn_approx_K} and the definition of $V_{g,n}(\x)$ and $F_{g,n}^{(N)}(\x)$,
  \begin{equation*}
    \frac{V_{g,n}(\x)}{V_{g,n}} 
    = F_{g,n}^{(N)}(\x)
    + \O[n,N]{\sum_{\alpha \in \N_0^n} \frac{\jap{\alpha}^{3N+2}}{\jap{g}^{N+1}} \prod_{i=1}^n \frac{x_i^{2 \alpha_i}}{2^{2 \alpha_i} (2
        \alpha_i+1)!}}.
  \end{equation*}
  We can control this remainder by writing that $\jap{\alpha}^{3N+2} = \O[n,N]{1+\abso{\alpha}_\infty^{3N+2}}$ and
  singling out an index $i$ such that $\alpha_i = \abso{\alpha}_\infty$. Since, for any $D$, the polynomials
  $(p_i)_{0 \leq i \leq D}$ introduced in \cref{nota:p_k} are a basis of the set of polynomials of degree $\leq D$,
  we can express $\alpha_i^{3N+2}$ as a linear combination of $p_k(\alpha_i)$ for integers $k \leq 3N+2$. Using
  \cref{lemm:sum_p_k}, we obtain
  \begin{equation*}
    \frac{V_{g,n}(\x)}{V_{g,n}}
    = F_{g,n}^{(N)}(\x) + \O[n,N]{\frac{\jap{\x}^{3N+1}}{\jap{g}^{N+1}} \exp \div{x_1+\ldots+x_n}}.
  \end{equation*}
  
  Let us now prove that $\x \mapsto F_{g,n}^{(N)}(\x)$ has the claimed form. Note that by definition of the set
  $\mathcal{P}_{n,K,a}$, we can express the function $\alpha \mapsto \tilde{c}_{g,n}^{(K)}(\alpha)$ as a linear
  combination of functions of the form
  \begin{equation*}
    g_{I,\beta,\mathbf{k}}(\alpha)
    = \prod_{i \in I} p_{k_i}(\alpha_i) \prod_{i \notin I} \1{\alpha_i=\beta_i},
  \end{equation*}
  where $\abso{\mathbf{k}} \leq K$ and $\abso{\beta}_\infty < a_{N+1}$.
  By \cref{lemm:sum_p_k},
  \begin{align*}
    & \sum_{\alpha \in \N_0^n} g_{I,\beta,\mathbf{k}}(\alpha)
    \prod_{i=1}^n \frac{x_i^{2 \alpha_i}}{2^{2 \alpha_i} (2\alpha_i+1)!} \\
    & = \prod_{\substack{i \in I \\ k_i \text{ even}}}
    \frac{x_i^{k_i}}{2^{k_i}} \sinhc \div{x_i}
    \prod_{\substack{i \in I \\ k_i \text{ odd}}}
    \frac{x_i^{k_i-1}}{2^{k_i-1}}
    \cosh \div{x_i}
    \prod_{\substack{i \notin I}}
    \frac{x_i^{2 \beta_i}}{2^{2 \beta_i}(2 \beta_i+1)!} \cdot
  \end{align*}
  We therefore observe that $F_{g,n}^{(N)}$ is a linear combination of
  functions of the form
  \begin{equation*}
    \mathbf{x} \mapsto
    x_1^{m_1} \ldots x_n^{m_n}
    \prod_{i \in I_+} \cosh \div{x_i} \prod_{i \in I_-} \sinhc \div{x_i}
  \end{equation*}
  where $I_+$ and $I_-$ are disjoint subsets of $\{1, \ldots, n\}$, and $\mathbf{m}=(m_1, \ldots, m_n)$ is a multi-index
  containing only even entries, which was our claim.
  
  In order to bound the degree, we furthermore observe that
  \begin{equation*}
    \sum_{i \in I_+} m_i + \sum_{i \in I_-} (m_i+1) \leq K = 2N
    \quad \text{and} \quad
    \forall i \notin I_+ \cup I_-, m_i <  a_{N+1}.
  \end{equation*}
  The coefficients are linear combinations of the derivatives $\delta^{\mathbf{m}}c_{g,n}(\alpha)/V_{g,n}$ for
  multi-indices $\mathbf{m}, \alpha$ such that $\abso{\mathbf{m}} \leq K$ and $\abso{\alpha}_\infty \leq a_{N+1}$, which
  can therefore also be expressed in terms of the
  $c_{g,n}(\alpha)/V_{g,n}$ for $\abso{\alpha}_\infty \leq a_{N+1}+2N$.
\end{proof}

We now conclude by proving how \cref{theo:volume_asympt_exp} implies \cref{cor:exp_pow_g}.

\begin{proof}
  Let us first prove the existence of the asymptotic expansion.
  For any $I_+ \sqcup I_- \subset \{1, \ldots, n\}$, the coefficients of the approximating polynomial
  $P_{g,n}^{(N,I_{\pm})}$ can be written as linear combinations of the $c_{g,n}(\alpha)/V_{g,n}$ with
  $\abso{\alpha}_\infty \leq A_N$.  By \cite[Theorem
  4.1]{mirzakhani2015}, for any such $\alpha$, we can write
  \begin{equation*}
    \frac{c_{g,n}(\alpha)}{V_{g,n}}
    = \sum_{k=0}^N \frac{e_{n}^{(k)}(\alpha)}{g^k}
    + \O[n,N]{\frac{1}{g^{N+1}}}.
  \end{equation*}
  Note that the implied constant in the previous equation a priori depends on the multi-index $\alpha$, but since
  $\abso{\alpha}_\infty \leq A_N$ we can bound it uniformly with a constant depending only on $N$.
  Then, $P_{g,n}^{(N,I_\pm)}$ can be rewritten as
  \begin{equation}
    \label{e:Pgn_mirz_zog}
    P_{g,n}^{(N,I_\pm)}(\x)
    = \sum_{k=0}^N \frac{\tilde{Q}_{n}^{(k,N,I_\pm)}(\x)}{g^k}
    + \O[n,N]{\frac{\jap{\x}^{2N}}{g^{N+1}} \prod_{i \notin I_+ \cup I_-} \jap{x_i}^{a_{N+1}}},
  \end{equation}
  where $\tilde{Q}_{n}^{(k,N,I_\pm)}$ are polynomial functions independent of $g$. The dependency of the remainder
  w.r.t. $\x$ in the previous expression is obtained by the bound on the degrees of $P_{g,n}^{(N,I_\pm)}$ presented in
  \cref{rem:deg}.  We then define, for each integer $k$, the function
  \begin{equation*}
    \tilde{f}_n^{(k,N)}(\x) := \sum_{I_+ \sqcup I_- \subset \{1, \ldots, n\}} \tilde{Q}_n^{(k,N,I_\pm)}(\x)
    \prod_{i \in I_+} \cosh \div{x_i} \prod_{i \in I_-} \sinhc \div{x_i}.
  \end{equation*}
  \Cref{e:thm_main,e:Pgn_mirz_zog} together with the fact that
  \begin{equation*}
    \prod_{i \notin I_+ \sqcup I_-} x_i^{a_{N+1}} \prod_{i \in I_+} \cosh \div{x_i} \prod_{i \in I_-} \sinh \div{x_i}
    = \O[N,n]{\exp \div{x_1+ \ldots + x_n}}
  \end{equation*}
  imply that these approximating functions satisfy \eqref{e:fn_approx}, i.e. for all $\x \in \R_{\geq 0}^n$,
  \begin{equation*}
    \frac{V_{g,n}(\x)}{V_{g,n}}
    = \sum_{k=0}^N \frac{\tilde{f}_{n}^{(k,N)}(\x)}{g^k}
    + \O[N,n]{\frac{\jap{\x}^{3N+1}}{g^{N+1}} \exp \div{x_1 + \ldots + x_n}}.
  \end{equation*}

  We now observe that, for any fixed $\x$, the previous equation is an asymptotic expansion of $V_{g,n}(\x)/V_{g,n}$ in
  powers of $g$, and its coefficients are therefore uniquely defined. In particular, for any $N'<N$ and any
  $k \in \{0, \ldots, N'\}$, $\tilde{f}_n^{(k,N)} (\x) = \tilde{f}_n^{(k,N')}(\x)$, and the number
  $\tilde{f}_n^{(k,N)}(\x)$ does not depend on the order of approximation $N$ and can be denoted more simply as
  $f_n^{(k)}(\x)$.
  
  Finally, the decomposition \eqref{eq:fn_sinh} of $f_n^{(k)}$ is uniquely defined because the family of functions of
  the form
  \begin{equation*}
    \x \in \R_{\geq 0}^n \mapsto x_1^{m_1} \ldots x_n^{m_n} \prod_{i \in I_+} \cosh \div{x_i} \prod_{i
      \in I_-} \sinhc \div{x_i}
  \end{equation*}
  for $m_1, \ldots, m_n \geq 0$ and
  $I_+ \sqcup I_- \subseteq \{1, \ldots, n\}$ is free.
\end{proof}

\bibliographystyle{alpha}
\bibliography{bibliography}

\begin{thebibliography}{NWX20}

\bibitem[Bus92]{buser1992}
Peter Buser.
\newblock {\em Geometry and {{Spectra}} of {{Compact Riemann Surfaces}}}.
\newblock {Birkh\"auser}, {Boston}, 1992.

\bibitem[Che75]{cheng1975}
Shiu-Yuen Cheng.
\newblock Eigenvalue comparison theorems and its geometric applications.
\newblock {\em Mathematische Zeitschrift}, 143(3):289--297, 1975.

\bibitem[DN09]{do2009}
Norman Do and Paul Norbury.
\newblock Weil--{{Petersson}} volumes and cone surfaces.
\newblock {\em Geometriae Dedicata}, 141:93--107, 2009.

\bibitem[Gol84]{goldman1984}
William~M. Goldman.
\newblock The symplectic nature of fundamental groups of surfaces.
\newblock {\em Advances in Mathematics}, 54(2):200--225, 1984.

\bibitem[GPY11]{guth2011}
Larry Guth, Hugo Parlier, and Robert Young.
\newblock Pants decompositions of random surfaces.
\newblock {\em Geometric and Functional Analysis}, 21(5):1069--1090, 2011.

\bibitem[HM21]{hide2021}
Will Hide and Michael Magee.
\newblock Near optimal spectral gaps for hyperbolic surfaces.
\newblock {\em arXiv:2107.05292}, 2021.

\bibitem[LW21]{lipnowski2021}
Michael Lipnowski and Alex Wright.
\newblock Towards optimal spectral gaps in large genus.
\newblock {\em arXiv:2103.07496}, 2021.

\bibitem[LX09]{liu2009}
Kefeng Liu and Hao Xu.
\newblock Recursion formulae of higher {{Weil}}--{{Petersson}} volumes.
\newblock {\em International Mathematics Research Notices}, 5:835--859, 2009.

\bibitem[Mir07a]{mirzakhani2007}
Maryam Mirzakhani.
\newblock Simple geodesics and {{Weil}}--{{Petersson}} volumes of moduli spaces
  of bordered {{Riemann}} surfaces.
\newblock {\em Inventiones Mathematicae}, 167(1):179--222, 2007.

\bibitem[Mir07b]{mirzakhani2007a}
Maryam Mirzakhani.
\newblock Weil--{{Petersson}} volumes and intersection theory on the moduli
  space of curves.
\newblock {\em Journal of the American Mathematical Society}, 20(1):1--23,
  2007.

\bibitem[Mir13]{mirzakhani2013}
Maryam Mirzakhani.
\newblock Growth of {{Weil}}--{{Petersson}} volumes and random hyperbolic
  surfaces of large genus.
\newblock {\em Journal of Differential Geometry}, 94(2):267--300, 2013.

\bibitem[Mon21]{monk2021a}
Laura Monk.
\newblock {\em Geometry and Spectrum of Typical Hyperbolic Surfaces}.
\newblock PhD thesis, Universit\'e de Strasbourg, 2021.

\bibitem[MP19]{mirzakhani2019}
Maryam Mirzakhani and Bram Petri.
\newblock Lengths of closed geodesics on random surfaces of large genus.
\newblock {\em Commentarii Mathematici Helvetici}, 94(4):869--889, 2019.

\bibitem[MT21]{monk2021b}
Laura Monk and Joe Thomas.
\newblock The tangle-free hypothesis on random hyperbolic surfaces.
\newblock {\em International Mathematics Research Notices}, rnab160, 2021.

\bibitem[MZ15]{mirzakhani2015}
Maryam Mirzakhani and Peter Zograf.
\newblock Towards large genus asymptotics of intersection numbers on moduli
  spaces of curves.
\newblock {\em Geometric and Functional Analysis}, 25(4):1258--1289, 2015.

\bibitem[NN98]{naatanen1998}
Marjatta N{\"a}{\"a}t{\"a}nen and Toshihiro Nakanishi.
\newblock Weil--{{Petersson}} areas of the moduli spaces of tori.
\newblock {\em Results in Mathematics}, 33(1-2):120--133, 1998.

\bibitem[NWX20]{nie2020}
Xin Nie, Yunhui Wu, and Yuhao Xue.
\newblock Large genus asymptotics for lengths of separating closed geodesics on
  random surfaces.
\newblock {\em arXiv:2009.07538}, 2020.

\bibitem[Wei58]{weil1958}
Andr{\'e} Weil.
\newblock On the moduli of {{Riemann}} surfaces.
\newblock In {\em \OE uvres {{Scientifiques}} - {{Collected Papers
  II}}:1951-1964}, pages 379--389. {Springer-Verlag}, {Berlin, Heidelberg},
  1958.

\bibitem[WX21]{wu2021}
Yunhui Wu and Yuhao Xue.
\newblock Random hyperbolic surfaces of large genus have first eigenvalues
  greater than {$\frac{3}{16}-\epsilon$}.
\newblock {\em arXiv:2102.05581}, 2021.

\end{thebibliography}

\end{document}